\documentclass[reqno,a4paper]{amsart}
\usepackage[T1]{fontenc}
\usepackage[utf8]{inputenc}
\usepackage[english]{babel}
\usepackage{microtype}

\usepackage{latexsym, amssymb, amsmath}
\usepackage{enumitem}
\usepackage{amsthm}
\usepackage{tikz-cd}
\usepackage{orcidlink}
\usepackage{doi}
\renewcommand{\doi}[1]{DOI:~\href{https://doi.org/#1}{#1}}
\usepackage{url}

\usepackage{verbatim}
\usepackage[all]{xy}
\usepackage{calrsfs}
\usepackage{mathabx}
\usepackage{xcolor}
\usepackage{mathtools}
\usepackage{thmtools}
\usepackage{blkarray}
\usepackage{booktabs}
\usepackage[textsize=scriptsize]{todonotes}
\usepackage{xurl}
\usepackage{nameref,hyperref,cleveref}

\theoremstyle{plain}
\newtheorem{theorem}[subsection]{Theorem}
\newtheorem{proposition}[subsection]{Proposition}
\newtheorem{corollary}[subsection]{Corollary}
\newtheorem{lemma}[subsection]{Lemma}

\theoremstyle{definition}
\newtheorem{definition}[subsection]{Definition}

\theoremstyle{remark}
\newtheorem{remark}[subsection]{Remark}
\newtheorem{example}[subsection]{Example}
\newtheorem{examples}[subsection]{Examples}

\crefname{examples}{Examples}{Examples}
\Crefname{examples}{Examples}{Examples}
\crefname{problem}{Open problem}{Open problems}
\Crefname{problem}{Open problem}{Open problems}

\numberwithin{equation}{section}

\newenvironment{tfae}
{
\begin{enumerate}}
{\end{enumerate}}

\newcommand{\cC}{\ensuremath{\mathcal{C}}}

\newcommand{\A}{\ensuremath{\mathcal{A}}}

\newcommand{\bF}{\mathbb{F}}         
\newcommand{\E}{\mathcal{E}}

\newcommand{\N}{\mathbb{N}}
\newcommand{\V}{\mathcal{V}}

\DeclareMathOperator{\bchar}{char}

\DeclareMathOperator{\Aut}{Aut}
\DeclareMathOperator{\Hol}{Hol}
\DeclareMathOperator{\op}{op}
\DeclareMathOperator{\Act}{Act}
\DeclareMathOperator{\DAct}{DAct}
\DeclareMathOperator{\Hom}{Hom}
\DeclareMathOperator{\Bim}{Bim}
\DeclareMathOperator{\SplExt}{SplExt}

\DeclareMathOperator{\End}{End}
\DeclareMathOperator{\Der}{Der}
\DeclareMathOperator{\ADer}{ADer}
\DeclareMathOperator{\Bider}{Bider}
\DeclareMathOperator{\Inn}{Inn}
\DeclareMathOperator{\M}{M}

\DeclareMathOperator{\USGA}{USGA}
\DeclareMathOperator{\Ann}{Ann}
\DeclareMathOperator{\bFM}{FM}

\DeclareMathOperator{\id}{id}

\newcommand{\Set}{\ensuremath{\mathsf{Set}}}
\newcommand{\Rng}{\ensuremath{\mathsf{Rng}}}
\newcommand{\Mag}{\ensuremath{\mathsf{Mag}}}

\newcommand{\Assoc}{\ensuremath{\mathsf{Assoc}}}

\newcommand{\Lie}{\ensuremath{\mathsf{Lie}}}
\newcommand{\Leib}{\ensuremath{\mathsf{Leib}}}

\newcommand{\BoolRng}{\ensuremath{\mathsf{BoolRng}}}

\newcommand{\CAssoc}{\ensuremath{\mathsf{CAssoc}}}
\newcommand{\ACAAssoc}{\ensuremath{\mathsf{ACAAssoc}}}
\newcommand{\Com}{\ensuremath{\mathsf{Com}}}
\newcommand{\ACom}{\ensuremath{\mathsf{ACom}}}
\newcommand{\JJord}{\ensuremath{\mathsf{JJord}}}
\newcommand{\Jord}{\ensuremath{\mathsf{Jord}}}
\newcommand{\AbAlg}{\ensuremath{\mathsf{AbAlg}}}
\newcommand{\Alg}{\ensuremath{\mathsf{Alg}}}
\newcommand{\PAlg}{\ensuremath{\mathsf{PAlg}}}
\newcommand{\Nil}{\ensuremath{\mathsf{Nil}}}
\newcommand{\Sol}{\ensuremath{\mathsf{Sol}}}
\newcommand{\Alt}{\ensuremath{\mathsf{Alt}}}

\newcommand{\Grp}{\ensuremath{\mathsf{Grp}}}
\newcommand{\Ab}{\ensuremath{\mathsf{Ab}}}
\newcommand{\Vect}{\ensuremath{\mathsf{Vec}}}

\newcommand{\CRng}{\ensuremath{\mathsf{CRng}}}

\newcommand{\SplExtX}{\ensuremath{\mathsf{SplExt}(X)}}

\date{}

\begin{document}

\title[Actions in varieties of non-associative algebras: a survey]{Actions in varieties of non-associative algebras: a survey}

\author[M.~Mancini]{Manuel Mancini~\orcidlink{0000-0003-2142-6193}}

\email{manuel.mancini@uclouvain.be; manuel.mancini@unipa.it}

\address{Institut de Recherche en Mathématique et Physique, Université catholique de Louvain, chemin du cyclotron 2 bte L7.01.02, B--1348 Louvain-la-Neuve, Belgium.}
\address{Dipartimento di Matematica e Informatica, Università degli Studi di Palermo, Via Archirafi 34, 90123 Palermo, Italy.}

\begin{abstract}
This article surveys the main results on the representability of internal actions in varieties of non-associative algebras over a field. We begin by recalling the general framework of internal actions and split extensions in semi-abelian categories, before specialising to Orzech categories of interest and varieties of non-associative algebras. We discuss action representability, weak action representability, and action accessibility, with emphasis on the relationship between the latter two notions. We then describe the constructions of the universal strict general actor in Orzech categories of interest and the external weak actor in action accessible, operadic varieties of non-associative algebras over a field, as well as their role in the study of weak action representability. We also give a tensor-domain formulation of partial algebras and adapt the construction of the external weak actor to this setting. These notions are illustrated by examples arising from associative, Lie, Leibniz, Jacobi--Jordan, anti-commutative anti-associative, alternative, and $2$-nilpotent algebras. We conclude with a summary of the known results and some open questions for future research.
\end{abstract}

\subjclass[2020]{08A35; 08A55; 08C05; 16B50; 17-02; 17A01; 17A32; 17A36; 18-02; 18E13}
\keywords{Internal action, split extension, weakly action representable category, category of interest, non-associative algebra, external weak actor}

\maketitle

\section{Introduction}\label{SecIntro}

Semi-abelian categories were introduced by G.~Janelidze, L.~Márki
and W.~Tholen in~\cite{Semi-Ab} as a categorical framework capturing
many of the algebraic and homological properties shared by groups,
rings and algebras. Within this framework, F.~Borceux,
G.~Janelidze and G.~M.~Kelly introduced the notion of an
\emph{internal action}~\cite{BJK}, extending the classical
correspondence between actions and split extensions from groups and
Lie algebras to arbitrary semi-abelian categories. More precisely, for every object $X$ of a
semi-abelian category $\cC$, they proved that there is a natural isomorphism
\[
\Act(-,X)\cong\SplExt(-,X),
\]
where $\Act(B,X)$ denotes the set of actions of $B$ on $X$ and
$\SplExt(B,X)$ denotes the set of isomorphism classes of split
extensions of $B$ by $X$.

Internal actions are particularly well behaved when the functor
$\Act(-,X)$, or equivalently $\SplExt(-,X)$, is representable for
every object $X$. More precisely, a semi-abelian category $\cC$ is
said to be \emph{action representable} if, for every object $X$ of~$\cC$,
there exists an object $[X]$ of $\cC$, called the \emph{actor} of
$X$, together with a natural isomorphism
\[
\SplExt(-,X)\cong\Hom_{\cC}(-,[X]).
\]
The classical examples are the category of groups, where the actor
of a group~$X$ is the automorphism group $\Aut(X)$, and the category
of Lie algebras over a field, where the actor of a Lie algebra $X$ is the Lie
algebra of derivations $\Der(X)$. However, action representability is
a restrictive condition. For instance, commutative associative
algebras are not action representable~\cite{BJK2}, and it was proved in~\cite{Tim} that, over an infinite field of characteristic
different from~$2$, the only non-trivial action representable varieties of
non-associative algebras are the variety of Lie algebras and that of abelian algebras.

The strength of action representability led to the introduction of several weaker conditions. D.~Bourn and G.~Janelidze introduced the notion of an \emph{action accessible category} in~\cite{act_accessible}, thereby including relevant examples such as rings and associative algebras. A.~Montoli later proved in~\cite{Montoli} that every Orzech category of interest~\cite{Orzech}, such as the category of Leibniz algebras, is action accessible. On the other hand, J.~M.~Casas, T.~Datuashvili and M.~Ladra constructed, for every object $X$ of an Orzech category of interest, a
\emph{universal strict general actor} $\USGA(X)$ in a suitable
larger category~\cite{Casas}.

A different weakening was introduced by G.~Janelidze
in~\cite{WRA}. Instead of requiring a natural isomorphism, one asks,
for every object $X$ of $\cC$, for an object $T=T(X)$ of $\cC$ and a
natural monomorphism of functors
\[
\tau\colon
\SplExt(-,X)
\rightarrowtail
\Hom_{\cC}(-,T).
\]
The pair $(T,\tau)$ is called a \emph{weak representation} of
$\SplExt(-,X)$, and $T$ is called a \emph{weak representing object}
of $X$. The category $\cC$ is said to be \emph{weakly action
representable} if $\SplExt(-,X)$ admits a weak representation for
every object $X$ of $\cC$.

Every action representable category is clearly weakly action
representable. Moreover, G.~Janelidze proved in~\cite{WRA} that every
weakly action representable category is action
accessible. Thus, one has the implications
\[
\textit{action representability}
\Rightarrow
\textit{weak action representability}
\Rightarrow
\textit{action accessibility}.
\]
Nevertheless, action
accessibility does not imply weak action representability in general,
as shown by J.~R.~A.~Gray in~\cite{Gray}. More recently,
it was proved in~\cite{XabiMancini} that the varieties of $k$-nilpotent and $n$-solvable Lie algebras over a field are action accessible but not
weakly action representable, for every $k\geq3$ and $n\geq2$,
respectively. These provide the first examples of action accessible varieties of non-associative algebras over a field which fail to be weakly action representable.

For an Orzech category of interest $\cC$, if $\USGA(X)$ is an object of $\cC$ for every object $X$ of $\cC$, then the category is weakly action
representable~\cite{CigoliManciniMetere}. This applies, for example,
to the categories of associative algebras and of Leibniz algebras over a field, with suitable presentations as Orzech categories of interest; see \Cref{USGA_Assoc,USGA_Leib}.

The aim of this survey is to present a unified account of these
notions in the context of varieties of non-associative algebras over
a field. Particular attention is devoted to the construction
introduced in~\cite{WRAAlg}: for every object $X$ of an action
accessible, operadic variety of non-associative algebras $\V$, one can define a partial algebra~$\E(X)$, called the \emph{external weak actor} of $X$, together with a natural monomorphism of functors
\[
\tau\colon
\SplExt(-,X)
\rightarrowtail
\Hom_{\PAlg}(\widetilde U(-),\E(X)),
\]
where $\PAlg$ is the category of partial algebras and
$\widetilde U\colon\V\to\PAlg$ is the forgetful functor. If $\E(X)$ is a total algebra and its multiplication satisfies the defining identities of $\V$, then $\tau$ gives a weak representation of $\SplExt(-,X)$. This is the case, for instance, for the varieties of $2$-nilpotent commutative and anti-commutative algebras.

In this survey, we modify the formulation of partial algebras used in~\cite{WRAAlg}. There, the domain of a partial multiplication on a vector space~$X$ was required to be a vector subspace of $X\times X$. Since products with zero are always defined in the construction of the external weak actor, this requirement forces the corresponding multiplications to be total. We therefore take the domain to be a vector subspace of $X\otimes X$ instead: a \emph{partial algebra} is a triple $(X,D,\mu)$, where $D\leq X\otimes X$ is a vector subspace and $\mu\colon D\to X$ is a linear map, and the product $xy$ is defined if and only if $x\otimes y\in D$, in which case $xy=\mu(x\otimes y)$. A partial algebra is \emph{total} if $D=X\otimes X$, and total algebras are precisely the non-associative algebras~$X$, viewed as $\widetilde U(X)=(X,X\otimes X,\widetilde m_X)$, with $\widetilde m_X$ the linearisation of the bilinear multiplication of~$X$. This yields the forgetful functor $\widetilde U\colon\V\to\PAlg$ considered above, which is fully faithful. Accordingly, the domain of the partial multiplication of $\E(X)$ is the inverse image of $\E(X)$ under the linearisation of the bilinear map $m_X\colon\E(X)\times\E(X)\to\End(X)^2$ of \Cref{E(X)}. Thus, the product of $f,g\in\E(X)$ is defined precisely when $m_X(f,g)\in\E(X)$, and a linear map $\varphi\colon B\to\E(X)$ defines a homomorphism of partial algebras $\widetilde U(B)\to\E(X)$ if and only if $\varphi(bb')=m_X\bigl(\varphi(b),\varphi(b')\bigr)$ for all $b,b'\in B$; see \Cref{lem:morphism-from-total}. We also describe how polynomial identities are evaluated in a partial algebra; see \Cref{SecPart}.

The paper is organised as follows. In \Cref{SecIntAct}, we recall
internal actions and split extensions in semi-abelian categories and
introduce action representability, action accessibility and weak
action representability. In \Cref{SecOrz}, we describe derived
actions in Orzech categories of interest and recall the construction
of the universal strict general actor. In \Cref{SecVar}, we introduce
varieties of non-associative algebras over a field, describe their derived actions
and recall the categorical characterisations relevant to the study
of their representability. We also introduce the category $\PAlg$ of
partial algebras. In \Cref{SecRepVar}, we construct the external weak actor and discuss its main properties and examples. Finally, in \Cref{SecConcl}, we summarise the known results in \Cref{tab:summary} and collect some open questions and possible directions for future research.

This survey is based on selected material from the author's doctoral
thesis~\cite{Mancini_Thesis}, reorganised and updated in light of
subsequent developments. It brings together results
from~\cite{BJK,BJK2,Casas,WRA,Orzech}, as well as more recent results
from~\cite{WRAAlg,CigoliManciniMetere,XabiMancini}. For completeness,
we give detailed arguments for some of the results recalled here;
see, for instance, \Cref{WRAimpliesAA,intderivedact}.

This survey was used as the basis for two seminars given by the author in the Informalgebra seminar series at the University of Salento, Lecce, in September 2026, and it has benefited from the discussions with the participants.

\section{Internal actions in semi-abelian categories}\label{SecIntAct}

Semi-abelian categories were introduced by G.~Janelidze, L.~Márki,
and W.~Tholen in~\cite{Semi-Ab} to provide a categorical setting
capturing common categorical-algebraic properties of groups, rings,
and algebras. Recall that a category $\cC$ is semi-abelian if it is
pointed, Barr-exact~\cite{Barr}, protomodular~\cite{Bourn}, and
has finite coproducts.

Throughout the text, unless otherwise stated, $\cC$ denotes a semi-abelian category. Whenever the objects of $\cC$ are algebras over a field $\bF$, we assume that $\bF$ is fixed and omit it from our notation. For any $\bF$-algebra $X$, we write $\End(X)$ for the vector space of $\bF$-linear endomorphisms of the underlying vector space of $X$.

One of the notions that can be studied in the context of semi-abelian
categories is that of an internal action~\cite{BJK}. For objects
$B$ and $X$ of a semi-abelian category~$\cC$, let
$(B\flat X,\kappa_{B,X})$ denote the kernel of the morphism
\[
[\id_B,0_{X,B}]\colon B+X\to B
\]
induced by the identity map $\id_B\colon B\to B$ and the zero morphism $0_{X,B}\colon X\to B$, where $B+X$ denotes the
\emph{coproduct} of $B$ and $X$. 

For a fixed object $B$ of $\cC$, we define the functor
\[
B\flat(-)\colon\cC\to\cC\colon X\mapsto B\flat X.
\]
We also define the natural transformations
\[
\eta^B\colon\id_{\cC}\to B\flat(-)
\qquad\text{and}\qquad
\mu^B\colon B\flat(B\flat(-))\to B\flat(-)
\]
as follows: for every object $X$ of $\cC$, the morphism
$\eta^B_X\colon X\to B\flat X$ is induced by the coproduct injection
$X\rightarrowtail B+X$. Moreover,
\[
\mu^B_X\colon B\flat(B\flat X)\to B\flat X
\]
is a restriction of the codiagonal
\[
(B+B)+X\to B+X.
\]
These data define a monad $(B\flat(-),\eta^B,\mu^B)$ on~$\cC$.

\begin{definition}\cite{BJK}\label{defact}
Let $\cC$ be a semi-abelian category and let $B$ be an object of~$\cC$. An \emph{internal action} of $B$ on an object $X$ of $\cC$ is a morphism $\xi\colon B\flat X\to X$ such that $(X,\xi)$ is an algebra over the monad $B\flat(-)$, that is, such that the diagrams
\begin{equation*}
\begin{tikzcd}[baseline=(current bounding box.center)]
X
    \arrow[r, "\eta^{B}_{X}"]
    \arrow[dr, equal]
&
B\flat X
    \arrow[d, "\xi"]
\\
&
X
\end{tikzcd}
\qquad\text{and}\qquad
\begin{tikzcd}[baseline=(current bounding box.center)]
B\flat(B\flat X)
    \arrow[r, "\mu^{B}_{X}"]
    \arrow[d, "\id_B\flat\xi"']
&
B\flat X
    \arrow[d, "\xi"]
\\
B\flat X
    \arrow[r, "\xi"']
&
X
\end{tikzcd}
\end{equation*}
commute.
\end{definition}

We write $\Act_{\cC}(B,X)$ for the set of internal actions of $B$
on $X$. When the category $\cC$ is clear from the context, we simply
write $\Act(B,X)$.

Fix an object $X$ of $\cC$. Internal actions on $X$ give rise to a functor
\[
\Act(-,X)\colon\cC^{\op}\to\Set
\]
which assigns to each object $B$ of $\cC$ the set $\Act(B,X)$. Given
an arrow $f\colon B'\to B$, the map $\Act(f,X)$ sends an internal action
$\xi\colon B\flat X\to X$ to
\[
\Act(f,X)(\xi)
\coloneqq
\xi\circ(f\flat X)\colon B'\flat X\to X,
\]
where $f\flat X\colon B'\flat X\to B\flat X$ is the unique morphism
in $\cC$ satisfying
\[
\kappa_{B,X}\circ(f\flat X)
=
(f+X)\circ\kappa_{B',X}.
\]
Here, $f+X\colon B'+X\to B+X$ is induced by $f$ on $B'$ and by
$\id_X$ on $X$.

An equivalent way of describing internal actions is in terms of
\emph{split extensions}. Recall from~\cite{BJK} that a
\emph{split extension} of $B$ by $X$ is a diagram in $\cC$
\begin{equation}\label{eq:split_ext}
\begin{tikzcd}
X \arrow[r, "k"]
&
A \arrow[r, shift left, "\alpha"]
&
B \arrow[l, shift left, "\beta"]
\end{tikzcd}
\end{equation}
such that $\alpha\circ\beta=\id_B$ and $(X,k)$ is the kernel of
$\alpha$. Protomodularity implies that the pair $(k,\beta)$ is
\emph{jointly strongly epimorphic}\footnote{Recall
from~\cite{acc} that a \emph{cospan} $(f,g)$ over an object $Z$ in a
category $\cC$ is said to be \emph{jointly strongly epimorphic}, or
\emph{jointly strongly epic}, if, for every commutative diagram
\[\xymatrix{
			& M \ar@{->}[d]_-{m}\\
			&P\\
			X \ar[r]_-{f} \ar@/^3ex/[ruu]^-{f'} & Z\ar[u]^-{\phi} \ar@{-->}@/_3ex/[uu]_-{\exists!\,\varphi} & Y \ar@/_3ex/[uul]_-{g'} \ar[l]^-{g}}
\]
in which $m$ is a monomorphism, there exists a unique morphism $\varphi\colon Z\to M$ such that $m\circ\varphi=\phi$.}. Moreover, $\alpha$ is the cokernel of $k$.

A morphism of split extensions from
\begin{equation*}
\begin{tikzcd}
X \arrow[r, "k"]
&
A \arrow[r, shift left, "\alpha"]
&
B \arrow[l, shift left, "\beta"] 
\end{tikzcd}
\end{equation*}
to
\begin{equation*}
\begin{tikzcd}
X' \arrow[r, "k'"]
&
A' \arrow[r, shift left, "\alpha'"]
&
B' \arrow[l, shift left, "\beta'"] 
\end{tikzcd}
\end{equation*}
is a triple of morphisms in $\cC$
\[
\bigl(f\colon X\to X',\ \theta\colon A\to A',\
g\colon B\to B'\bigr)
\]
such that the following diagram commutes:
\begin{equation*}
\begin{tikzcd}
X
    \arrow[r, "k"]
    \arrow[d, "f"']
&
A
    \arrow[r, shift left, "\alpha"]
    \arrow[d, "\theta"']
&
B
    \arrow[l, shift left, "\beta"]
    \arrow[d, "g"]
\\
X' \arrow[r, "k'"]
&
A' \arrow[r, shift left, "\alpha'"]
&
B' \arrow[l, shift left, "\beta'"].
\end{tikzcd}
\end{equation*}
Since $(k,\beta)$ is jointly strongly epimorphic, $\theta$ is uniquely
determined by $f$ and $g$. Moreover, by protomodularity, every
morphism of split extensions fixing $X$ and $B$ is necessarily an
isomorphism.

For an object $X$ of $\cC$, we define the functor
\[
\SplExt_{\cC}(-,X)\colon\cC^{\op}\to\Set
\]
which assigns to each object $B$ of $\cC$ the set
$\SplExt_{\cC}(B,X)$ of isomorphism classes of split extensions of
$B$ by $X$ in $\cC$. To a morphism $f\colon B'\to B$, it assigns the
\emph{change of base} function
\[
f^*\coloneqq\SplExt_{\cC}(f,X)\colon
\SplExt_{\cC}(B,X)\to\SplExt_{\cC}(B',X),
\]
defined by pulling back along $f$.

More precisely, given a split extension~\eqref{eq:split_ext}, one
forms the pullback of $\alpha$ along $f\colon B'\to B$, obtaining the
following diagram:
\[
\begin{tikzcd}
X
    \arrow[r, "k'"]
    \arrow[d, equal]
&
A\times_B B'
    \arrow[r, shift left, "p_2"]
    \arrow[d, "p_1"']
&
B'
    \arrow[l, shift left, "\beta'"]
    \arrow[d, "f"]
\\
X \arrow[r, "k"]
&
A \arrow[r, shift left, "\alpha"]
&
B \arrow[l, shift left, "\beta"],
\end{tikzcd}
\]
where $p_1$ and $p_2$ are the pullback projections, and
\[
\beta'=\langle\beta\circ f,\id_{B'}\rangle
\]
is the unique section of $p_2$ induced by the universal property of
the pullback. Notice that $p_2$ has the same kernel as $\alpha$, with $k'=\langle k,0_{X,B'}\rangle$.

When the category is clear from the context, we simply write $\SplExt(-,X)$.

The relationship between internal actions and split extensions in a
semi-abelian category is described by the following result.

\begin{theorem}\cite{BJK}\label{actspltnatural}
For every object $X$ of $\cC$, there is a natural isomorphism of
functors
\[
\Act(-,X)\cong\SplExt(-,X).
\]
\end{theorem}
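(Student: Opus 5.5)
The plan is to build explicit maps in both directions, show they are mutually inverse, and then check naturality in $B$.

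\emph{From split extensions to actions.} Given a split extension \eqref{eq:split_ext}, consider the morphism $[\beta,k]\colon B+X\to A$. It satisfies $\alpha\circ[\beta,k]=[\id_B,0_{X,B}]$, so it restricts to kernels: there is a unique $\xi\colon B\flat X\to X$ with $k\circ\xi=[\beta,k]\circ\kappa_{B,X}$. The unit axiom $\xi\circ\eta^B_X=\id_X$ follows because $\kappa_{B,X}\circ\eta^B_X$ is the coproduct injection of $X$ and $k$ is monic. The associativity square follows in the same way. After composing with the monomorphism $k$, both composites $B\flat(B\flat X)\to X$ factor through $[\beta,\beta,k]\colon (B+B)+X\to A$. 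One sees this by unwinding $\mu^B_X$ as a restriction of the codiagonal, and $\id_B\flat\xi$ as a restriction of $\id_B+\xi$. Isomorphic split extensions clearly give the same $\xi$, so we obtain a map $\SplExt(B,X)\to\Act(B,X)$.

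\emph{From actions to split extensions.} Given an action $\xi$, we need a semidirect product $A=B\ltimes_\xi X$. We define it as the coequaliser in $\cC$ of the pair
\[
\kappa_{B,X}\circ\xi,\;\kappa_{B,X}\colon\ B\flat(B\flat X)\rightrightarrows B+X,
\]
or, more precisely, as the quotient of $B+X$ by the smallest normal subobject identifying $\kappa_{B,X}$ with $\iota_X\circ\xi$. Here $\iota_X\colon X\to B+X$ is the coproduct injection. Equivalently, $A$ is the coequaliser of $\kappa_{B,X}$ and $\iota_X\circ\xi$ out of $B\flat X$; this coequaliser exists because $\cC$ is Barr-exact and therefore regular with coequalisers of such pairs. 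Let $q\colon B+X\to A$ be the quotient map. Then $[\id_B,0]$ factors through $q$ as a map $\alpha$, because it coequalises the pair. Set $\beta=q\iota_B$ and $k=q\iota_X$.

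The key point is that $k$ is a kernel of $\alpha$. This is the main obstacle, and here the monad axioms and protomodularity enter. My plan is to show that $q\circ\kappa_{B,X}\colon B\flat X\to A$ factors as $k\circ\xi$, and that the restriction of $q$ to kernels, $B\flat X\to\ker\alpha$, has kernel equal to the kernel of $\xi$ (using the unit law, $\xi$ is a split epi). One then deduces $\ker\alpha\cong X$ via $k$. Concretely, I would use that in a semi-abelian category the kernel of the induced map on $\ker$'s is computed by the normal closure, and the associativity axiom guarantees that the relation generated by $\xi$ is already a normal equivalence relation (a congruence) on $B\flat X$ stable under the $B$-conjugation $\mu^B_X$. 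This is the monadicity argument of \cite{BJK}: I would in fact aim to show that $B\flat(-)$ corresponds to the monad induced by the adjunction $\SplExt(B,-)\leftrightarrows\cC$, namely kernel versus the free split extension $B+X\to B$, and that this adjunction is monadic. Beck's criterion applies because the kernel functor reflects isomorphisms by protomodularity, and it preserves the relevant coequalisers by exactness.

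\emph{Inverse bijections and naturality.} Recovering $\xi$ from $B\ltimes_\xi X$ is immediate from the construction. Conversely, for a given split extension the map $[\beta,k]$ coequalises the defining pair, giving a morphism $B\ltimes_\xi X\to A$ of split extensions fixing $X$ and $B$. By protomodularity, as recalled above, this morphism is an isomorphism. Naturality for $f\colon B'\to B$ amounts to checking that the action of the pullback extension is $\xi\circ(f\flat X)$. This follows since $[\beta\circ f,k]=[\beta,k]\circ(f+X)$ and $\kappa_{B,X}\circ(f\flat X)=(f+X)\circ\kappa_{B',X}$, together with the description $k'=\langle k,0\rangle$ and $\beta'=\langle\beta f,\id\rangle$ and the fact that $p_1$ is monic on kernels.
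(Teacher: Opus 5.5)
Your plan is correct and is essentially the argument of \cite{BJK}, which the paper cites without reproducing. There, the kernel functor from split epimorphisms over $B$ to $\cC$ has left adjoint $X\mapsto(B+X,[\id_B,0_{X,B}],\iota_B)$ with induced monad $B\flat(-)$ and is monadic by protomodularity and exactness; your map via $[\beta,k]$ is the comparison functor, and your coequaliser is Beck's inverse construction. Two repairs are needed: the first displayed pair is ill-typed (take $\id_B+\xi$ and $[\iota_B,\kappa_{B,X}]\colon B+(B\flat X)\rightrightarrows B+X$, which has the same coequaliser as your pair $\kappa_{B,X},\iota_X\circ\xi$), and the key step that $k$ is the kernel of $\alpha$ should go through the monadicity route rather than the normal-closure sketch, which you never complete: $\ker$ preserves this reflexive coequaliser and sends it to the split coequaliser $\xi$ of $\id_B\flat\xi$ and $\mu^B_X$, whence $\ker\alpha\cong X$ via $k$.
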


\subsection{Action representable and action accessible categories}
\label{SecAR}

In~\cite{BJK}, F.~Borceux, G.~Janelidze, and G.~M.~Kelly introduced
the notion of an \emph{action representable} category.

\begin{definition}\cite{BJK}
A semi-abelian category $\cC$ is said to be \emph{action representable}
if, for every object $X$ of $\cC$, the functor $\SplExt(-,X)$ is
representable. In other words, there exists an object $[X]$ of $\cC$,
called the \emph{actor} of $X$, together with a natural isomorphism of
functors
\begin{align*}
\SplExt(-,X)\cong\Hom_{\cC}(-,[X]).
\end{align*}
\end{definition}

Basic examples of action representable categories are the category
$\Grp$ of groups, the category $\Lie$ of Lie algebras over a field,
and any abelian category.

\begin{example}
Recall that an (external) action of a group $B$ on a group $X$ is a
map
\[
\xi\colon B\times X\to X\colon (b,x)\mapsto b\cdot x
\]
such that $1_B\cdot x=x$, $(bb')\cdot x=b\cdot(b'\cdot x)$, and
$b\cdot(xx')=(b\cdot x)(b\cdot x')$, for all $b,b'\in B$ and
$x,x'\in X$. Here, we use multiplicative notation for both $B$ and
$X$.

Given an action $\xi\colon B\times X\to X$ in $\Grp$, one can
construct the \emph{semidirect product} $B\ltimes X$ of $B$ and
$X$ with respect to $\xi$. Its underlying set is the Cartesian product
$B\times X$, and its multiplication is given by
\[
(b,x)(b',x')=(bb',x(b\cdot x')),
\]
with $1_{B\ltimes X}=(1_B,1_X)$ and
$(b,x)^{-1}=(b^{-1},b^{-1}\cdot x^{-1})$. Thus, we can associate with~$\xi$ the following split extension in $\Grp$:
\begin{equation}\label{splextgrpsemidirect}
\begin{tikzcd}
X \arrow[r, "\iota_2"]
&
B\ltimes X \arrow[r, shift left, "\pi_1"]
&
B \arrow[l, shift left, "\iota_1"],
\end{tikzcd}
\end{equation}
where $\iota_2(x)=(1_B,x)$, $\iota_1(b)=(b,1_X)$, and $\pi_1(b,x)=b$.

Conversely, suppose that we start with a split extension in $\Grp$
\begin{equation}\label{splextgrp}
\begin{tikzcd}
X \arrow[r, "k"]
&
A \arrow[r, shift left, "\alpha"]
&
B \arrow[l, shift left, "\beta"].
\end{tikzcd}
\end{equation}
We can then define an action of $B$ on $X$ by\footnote{Indeed, one has
\[
\alpha\bigl(\beta(b)k(x)\beta(b)^{-1}\bigr)
=\alpha(\beta(b))\alpha(k(x))\alpha(\beta(b))^{-1}
=b1_Bb^{-1}=1_B.
\]
Hence, $\beta(b)k(x)\beta(b)^{-1}\in\ker(\alpha)\cong X$, so that
$k^{-1}\bigl(\beta(b)k(x)\beta(b)^{-1}\bigr)$ is well defined.}
\begin{equation}\label{actgrp}
B\times X\to X\colon
(b,x)\mapsto
k^{-1}\bigl(\beta(b)k(x)\beta(b)^{-1}\bigr)
\end{equation}
and there is an isomorphism of split extensions
\begin{equation*}
\begin{tikzcd}
X
    \arrow[r, "\iota_2"]
    \arrow[d, equal]
&
B\ltimes X
    \arrow[r, shift left, "\pi_1"]
    \arrow[d, "\theta"']
&
B
    \arrow[l, shift left, "\iota_1"]
    \arrow[d, equal]
\\
X \arrow[r, "k"]
&
A \arrow[r, shift left, "\alpha"]
&
B \arrow[l, shift left, "\beta"],
\end{tikzcd}
\end{equation*}
where $B\ltimes X$ is the semidirect product associated with
\eqref{actgrp} and $\theta(b,x)=k(x)\beta(b)$.

Moreover, isomorphism classes of split extensions of groups are in bijection with group homomorphisms $B\to\Aut(X)$, where $\Aut(X)$ denotes the group of \emph{automorphisms} of $X$. Given a split extension of groups as in \eqref{splextgrp}, one can define a group homomorphism
\[
\varphi\colon B\to\Aut(X)\colon b\mapsto\varphi_b,
\]
where
$\varphi_b(x)=k^{-1}\bigl(\beta(b)k(x)\beta(b)^{-1}\bigr)$.
Conversely, every morphism $\varphi\colon B\to\Aut(X)$ defines a
semidirect product $B\ltimes X$ with multiplication
\[
(b,x)(b',x')=(bb',x\varphi_b(x')),
\]
and hence a split extension of $B$ by $X$ as in
\eqref{splextgrpsemidirect}.

This bijection is natural in $B$, since the diagram in $\Set$
\[
\begin{tikzcd}
\SplExt(B,X)
    \arrow[r]
    \arrow[d, swap, "{f^*}"]
&
\Hom_\Grp(B,\Aut(X))
    \arrow[d, "{-\circ f}"]
\\
\SplExt(B',X)
    \arrow[r]
&
\Hom_\Grp(B',\Aut(X))
\end{tikzcd}
\]
commutes for every group homomorphism $f\colon B'\to B$. Thus,
$\Aut(X)$ is the actor of $X$, and the category $\Grp$ is action
representable.
\end{example}

\begin{example}\label{exliealgebras}
The same argument applies to the category of Lie algebras over a field,
with $\Aut(X)$ replaced by the Lie algebra of derivations $\Der(X)$.

Let $\bF$ be a field. Recall that a Lie algebra $(X,[-,-])$ is a
vector space $X$ over $\bF$ endowed with a bilinear map
\[
[-,-]\colon X\times X\to X,
\]
called \emph{commutator} or \emph{Lie bracket}, which is
\emph{alternating}, that is, $[x,x]=0$ for any~$x\in X$, and
satisfies the \emph{Jacobi identity}
\[
[x,[y,z]]+[y,[z,x]]+[z,[x,y]]=0
\]
for all $x,y,z\in X$. We denote by $\Lie$ the category of Lie algebras
over $\bF$, whose morphisms are the $\bF$-linear maps preserving the
bracket.

An (external) action of a Lie algebra $B$ on a Lie algebra $X$ is
given by a pair of bilinear maps
\[
l\colon B\times X\to X,
\qquad
r\colon X\times B\to X,
\]
denoted by $[b,x]=l(b,x)$ and $[x,b]=r(x,b)$, such that
\begin{itemize}
\item[(i)] $[x,b]=-[b,x]$;
\item[(ii)]
$[b,[x,x']]=[[b,x],x']+[x,[b,x']]$;
\item[(iii)]
$[[b,b'],x]=[b,[b',x]]
-[b',[b,x]]$;
\end{itemize}
for all $b,b'\in B$ and $x,x'\in X$. The corresponding semidirect
product $B\ltimes X$ is the Lie algebra defined on the direct sum of
vector spaces $B\oplus X$ with commutator
\[
[(b,x),(b',x')]
=
([b,b'],[x,x']+[b,x']-[b',x]).
\]
Thus, we can associate with the action $(l,r)$ the following split
extension in $\Lie$:
\begin{equation}\label{splextliesemidirect}
\begin{tikzcd}
X \arrow[r, "\iota_2"]
&
B\ltimes X \arrow[r, shift left, "\pi_1"]
&
B. \arrow[l, shift left, "\iota_1"]
\end{tikzcd}
\end{equation}
Here, $B\ltimes X=(B\oplus X,[-,-])$ is the semidirect product of $B$ and $X$ with respect to the bilinear maps $(l,r)$, while $\iota_2(x)=(0,x)$, $\iota_1(b)=(b,0)$, and
$\pi_1(b,x)=b$.

Conversely, suppose that we start with a split extension in $\Lie$
\begin{equation}\label{splextlie}
\begin{tikzcd}
X \arrow[r, "k"]
&
A \arrow[r, shift left, "\alpha"]
&
B. \arrow[l, shift left, "\beta"]
\end{tikzcd}
\end{equation}
We can then define an action of $B$ on $X$ by\footnote{Again, one has
\[
\alpha\bigl([\beta(b),k(x)]\bigr)
=[\alpha(\beta(b)),\alpha(k(x))]
=[b,0]=0.
\]
Hence, $[\beta(b),k(x)]\in\ker(\alpha) \cong X$, so that
$k^{-1}\bigl([\beta(b),k(x)]\bigr)$ is well defined.}
\begin{equation}\label{actlie}
[b,x]=-[x,b]=k^{-1}\bigl([\beta(b),k(x)]\bigr),
\end{equation}
and there is an isomorphism of split extensions
\begin{equation*}
\begin{tikzcd}
X
    \arrow[r, "\iota_2"]
    \arrow[d, equal]
&
B\ltimes X
    \arrow[r, shift left, "\pi_1"]
    \arrow[d, "\theta"']
&
B
    \arrow[l, shift left, "\iota_1"]
    \arrow[d, equal]
\\
X \arrow[r, "k"]
&
A \arrow[r, shift left, "\alpha"]
&
B \arrow[l, shift left, "\beta"].
\end{tikzcd}
\end{equation*}
Here, $B\ltimes X$ is the semidirect product associated with
\eqref{actlie} and $\theta(b,x)=\beta(b)+k(x)$.

We further recall that a derivation of a Lie algebra $X$ is a linear
endomorphism $d\colon X\to X$ such that
\[
d([x,y])=[d(x),y]+[x,d(y)],
\]
for all $x,y\in X$. The vector space
$\Der(X)$ of derivations of $X$ is a Lie algebra with respect to the
bracket
\[
[d,d']=d\circ d'-d'\circ d.
\]
The isomorphism classes of split extensions of Lie algebras are in bijection with the Lie algebra homomorphisms $B\to\Der(X)$. Given a split extension of Lie algebras as in~\eqref{splextlie}, one can define a Lie algebra homomorphism
\[
\varphi\colon B\to\Der(X)\colon b\mapsto\varphi_b,
\]
where $\varphi_b(x)=k^{-1}\bigl([\beta(b),k(x)]_A\bigr)$.
Conversely, every morphism $\varphi\colon B\to\Der(X)$ defines a
semidirect product $B\ltimes X$ with Lie bracket
\[
[(b,x),(b',x')]
=
([b,b'],[x,x']+\varphi_b(x')-\varphi_{b'}(x)),
\]
and hence a split extension of $B$ by $X$ as in
\eqref{splextliesemidirect}.

This bijection is natural in $B$, since the diagram in $\Set$
\[
\begin{tikzcd}
\SplExt(B,X)
    \arrow[r]
    \arrow[d, swap, "{f^*}"]
&
\Hom_\Lie(B,\Der(X))
    \arrow[d, "{-\circ f}"]
\\
\SplExt(B',X)
    \arrow[r]
&
\Hom_\Lie(B',\Der(X))
\end{tikzcd}
\]
commutes for every Lie algebra homomorphism $f\colon B'\to B$. Thus,
$\Der(X)$ is the actor of $X$, and the category $\Lie$ is action
representable.
\end{example}

A first example of a category that is not action representable is the
category $\CAssoc$ of commutative associative algebras over a field
$\bF$. In~\cite{BJK2}, the authors proved that, for every commutative
associative algebra $X$, there is a natural isomorphism of functors
\[
\SplExt_{\CAssoc}(-,X)
\cong
\Hom_{\Assoc}(\overline{U}(-),\M(X)),
\]
where $\overline{U}\colon\CAssoc\to\Assoc$ denotes the forgetful functor and
\[
\M(X)
=
\{f\in\End(X)\mid f(xy)=f(x)y, \; \forall x,y\in X\}
\]
is the associative algebra of \emph{multipliers} of
$X$~\cite{MacLane58}, endowed with the usual composition of
functions. Moreover, $\M(X)$ is a commutative algebra if and only if
the functor $\SplExt_{\CAssoc}(-,X)$ is representable
(see~\cite[Theorem~2.6]{BJK2}). Since there are commutative
associative algebras $X$ for which $\M(X)$ is not commutative—for
instance, any abelian algebra $X$ with $\dim_{\bF}X\geq2$, for which
$\M(X)=\End(X)$—the category $\CAssoc$ is not action representable.

An analogous result was then established
in~\cite[Proposition~1.11]{Tim} for the category $\Assoc$ of
associative algebras over $\bF$. Indeed, if $\Assoc$ were action
representable, then, for any associative algebras $B_1,B_2$ and $X$,
action representability would imply that
\[
\SplExt(B_1+B_2,X)
\cong
\SplExt(B_1,X)\times\SplExt(B_2,X).
\]
Thus, every pair of actions of $B_1$ and $B_2$ on $X$ would determine
an action of their coproduct $B_1+B_2$ on $X$. However, one can
construct two actions on an abelian algebra $X$, respectively by
one-dimensional abelian algebras $B_1$ and $B_2$, which cannot be
combined in this way. More precisely, the multiplication induced on
the corresponding semidirect product would fail to be associative. This contradiction shows that $\Assoc$ is not action representable.

Given an internal action $\xi$ of $B$ on $X$, we denote by $B\ltimes X$ the middle object of the split extension corresponding to $\xi$ under \Cref{actspltnatural}, and we call it the \emph{semidirect product} of $B$ and $X$ with respect to~$\xi$.

\begin{remark}
Let $\SplExtX$ be the category of split extensions in $\cC$ with fixed kernel~$X$, whose morphisms are triples
$(\id_X,\theta,g)$ making the diagrams of split extensions commute. Since a functor $\cC^{\op}\to\Set$ is representable if and only if its
category of elements has a terminal object (see~\cite{BJK}), action
representability is equivalent to requiring that, for every object
$X$ of $\cC$, the category $\SplExtX$ has a terminal object of the form
\begin{equation*}
\begin{tikzcd}
X \arrow[r, "k_X"]
&
{[X]\ltimes X} \arrow[r, shift left, "\alpha_X"]
&
{[X]} \arrow[l, shift left, "\beta_X"].
\end{tikzcd}
\end{equation*}
\end{remark}

More precisely, we have the following result.

\begin{theorem}\cite{BJK}\label{termobj}
A semi-abelian category $\cC$ is action representable if and only if,
for every object $X$ of $\cC$, there exist an object $[X]$ of $\cC$
and an internal action of~$[X]$ on $X$ with the following universal
property: for every object $B$ of $\cC$ and every split extension
\begin{equation*}
\begin{tikzcd}
X \arrow[r, "k"]
&
A \arrow[r, shift left, "\alpha"]
&
B \arrow[l, shift left, "\beta"],
\end{tikzcd}
\end{equation*}
there is a unique morphism $b \colon B\to[X]$ such that there exists a morphism
$\theta \colon A\to[X]\ltimes X$ making the following diagram commute:
\begin{equation*} 
\begin{tikzcd}
X
    \arrow[r, "k"]
    \arrow[d, equal]
&
A
    \arrow[r, shift left, "\alpha"]
    \arrow[d, "\theta"']
&
B
    \arrow[l, shift left, "\beta"]
    \arrow[d, "b"]
\\
X \arrow[r, "k_X"]
&
{[X]\ltimes X} \arrow[r, shift left, "\alpha_X"]
&
{[X]} \arrow[l, shift left, "\beta_X"].
\end{tikzcd}
\end{equation*}
\end{theorem}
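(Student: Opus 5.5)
The plan is to reduce the statement to the Yoneda lemma, in the form recalled in the preceding remark: a functor $F\colon\cC^{\op}\to\Set$ is representable if and only if its category of elements has a terminal object. We take $F=\SplExt(-,X)$. Its category of elements has as objects pairs $(B,[e])$ with $[e]\in\SplExt(B,X)$. A morphism $(B',[e'])\to(B,[e])$ is an arrow $g\colon B'\to B$ with $g^*[e]=[e']$.

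The first step is to identify this category of elements with (a skeleton of) $\SplExtX$. Given $g\colon B'\to B$ with $g^*[e]=[e']$, the universal property of the pullback defining $g^*$ yields a morphism of split extensions $(\id_X,\theta,g)$ from $e'$ to $e$. Conversely, suppose $(\id_X,\theta,g)\colon e'\to e$ is a morphism of split extensions. Then the comparison map into the pullback $A\times_B B'$ is a morphism of split extensions over $B'$ fixing $X$. By protomodularity, as recalled after the definition of morphisms of split extensions, it is an isomorphism, so $g^*[e]=[e']$. Since $\theta$ is uniquely determined by $\id_X$ and $g$ (joint strong epimorphicity of $(k,\beta)$), a morphism in $\SplExtX$ is the same datum as its base component $g$. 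Hence the existence of a morphism $(B',e')\to(B,e)$ with base $g$ is equivalent to $g^*[e]=[e']$. This is the technical core, and the step I expect to require the most care: one must check that the comparison map is a morphism of split extensions and that the class of $e'$ does not depend on representatives.

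With this in hand, both directions follow. Suppose $\cC$ is action representable, with natural isomorphism $\SplExt(-,X)\cong\Hom_\cC(-,[X])$. Let $e_X$ be a split extension representing the class corresponding to $\id_{[X]}$; by \Cref{actspltnatural} it is, up to isomorphism, the semidirect product $[X]\ltimes X$ for some action of $[X]$ on $X$. By Yoneda, for every $[e]\in\SplExt(B,X)$ there is a unique $b\colon B\to[X]$ with $b^*[e_X]=[e]$, namely the image of $[e]$. By the first step, this is exactly the statement that $b$ is the unique morphism admitting some $\theta$ making the diagram commute.

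Conversely, suppose the universal split extension $e_X$ exists, and define $\tau_B\colon\SplExt(B,X)\to\Hom_\cC(B,[X])$ by sending $[e]$ to the unique $b$. This map is well defined on isomorphism classes, since composing with an isomorphism of extensions preserves the diagram. It is natural: if $b$ corresponds to $e$, then composing the morphism $f^*e\to e$ with $(\id_X,\theta,b)$ shows that $b\circ f$ corresponds to $f^*e$. Its inverse is $h\mapsto h^*[e_X]$. Indeed, the first step gives $\tau(h^*[e_X])=h$, by uniqueness. It also gives $\tau([e])^*[e_X]=[e]$, because a morphism $e\to e_X$ with base $b$ forces $b^*[e_X]=[e]$. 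Therefore $\tau$ is a natural isomorphism, and $\cC$ is action representable.
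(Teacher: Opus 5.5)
Your proposal is correct and follows essentially the same route as the paper. The forward direction uses the Yoneda lemma via a terminal object in the category of elements of $\SplExt(-,X)$. For the converse, $\tau_B$ is defined by the universal property, with inverse $h\mapsto h^*[e_X]$, and the argument relies on uniqueness together with the protomodularity fact that a morphism of split extensions which is the identity on $X$ has a pullback right-hand square. The only difference is that you spell out the identification of morphisms in the category of elements with morphisms in $\SplExtX$ (existence of $(\id_X,\theta,g)$ if and only if $g^*[e]=[e']$), which the paper uses only implicitly.
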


We include an argument for completeness.

\begin{proof}
Suppose first that $\cC$ is action representable. Let $X$ be an
object of $\cC$, and let $[X]$ be its actor. The identity morphism
$\id_{[X]}$ determines an internal action $\xi$ of~$[X]$ on $X$.
Denote by
\begin{equation*}
\begin{tikzcd}
X \arrow[r, "k_X"]
&
{[X]\ltimes X} \arrow[r, shift left, "\alpha_X"]
&
{[X]} \arrow[l, shift left, "\beta_X"]
\end{tikzcd}
\end{equation*}
the split extension associated with $\xi$ under the natural bijection
of \Cref{actspltnatural}.

The category of elements $\mathsf{el}(\SplExt(-,X))$ has a terminal
object given by this split extension. Thus, for every object
$B$ of $\cC$ and every split extension
\begin{equation*}
\begin{tikzcd}
X \arrow[r, "k"]
&
A \arrow[r, shift left, "\alpha"]
&
B \arrow[l, shift left, "\beta"],
\end{tikzcd}
\end{equation*}
there exist a unique morphism $b\colon B\to[X]$ and a (necessarily unique) morphism $\theta$ such that the
following diagram commutes:
\begin{equation*}
\begin{tikzcd}
X
    \arrow[r, "k"]
    \arrow[d, equal]
&
A
    \arrow[r, shift left, "\alpha"]
    \arrow[d, "\theta"']
&
B
    \arrow[l, shift left, "\beta"]
    \arrow[d, "b"]
\\
X \arrow[r, "k_X"]
&
{[X]\ltimes X} \arrow[r, shift left, "\alpha_X"]
&
{[X]} \arrow[l, shift left, "\beta_X"].
\end{tikzcd}
\end{equation*}
The morphism $\theta$ is uniquely determined by $\id_X$ and $b$, and the right-hand square is a pullback.

Conversely, suppose that, for every object $X$ of $\cC$, there exists
an object $[X]$ acting on $X$ and satisfying the above universal
property. For every object $B$ of $\cC$, let
\[
\tau_B\colon\SplExt(B,X)\longrightarrow\Hom_{\cC}(B,[X])
\]
send the isomorphism class of a split extension to the unique morphism
provided by the universal property. On the other hand, every morphism
$b\colon B\to[X]$ determines a split extension of $B$ by $X$ by
pulling back along $b$ the split extension associated with the action of $[X]$ on $X$. These two assignments are mutually inverse: one composite is the identity by the uniqueness in the universal property, and the other one because, by protomodularity, the right-hand square of a morphism of split extensions which is the identity on kernels is a pullback. This establishes a bijection
\[
\SplExt(B,X)\cong\Hom_{\cC}(B,[X])
\]
for every object $B$ of $\cC$, and naturality follows from the
uniqueness in the universal property.
Hence, the maps~$\tau_B$ determine a natural isomorphism
\[
\SplExt(-,X)\cong\Hom_{\cC}(-,[X]).
\]
Therefore, $[X]$ is the actor of $X$, and $\cC$ is action
representable.
\end{proof}

\begin{example}
Let $\cC=\Grp$, and let $X$ be a group. Then $\Aut(X)$ acts on $X$ via
\[
\Aut(X)\times X\to X\colon(\varphi,x)\mapsto\varphi(x).
\]
The corresponding semidirect product $\Aut(X)\ltimes X$ is called the
\emph{holomorph} of $X$ and is denoted by $\Hol(X)$;
see~\cite{Burnside}. Its multiplication is given by
\[
(\varphi,x)(\varphi',x')
=
(\varphi\circ\varphi',x\varphi(x')).
\]
\end{example}

\begin{example}\label{exactreplie}
Let $\cC=\Lie$, and let $X$ be a Lie algebra. The action of the Lie
algebra $\Der(X)$ on $X$ is defined by the pair of bilinear maps
\[
\Der(X)\times X\to X\colon(d,x)\mapsto d(x)
\]
and
\[
X\times\Der(X)\to X\colon(x,d)\mapsto-d(x).
\]
The associated semidirect product $\Der(X)\ltimes X$ is again called
the \emph{holomorph} of $X$ and is denoted by
$\Hol(X)$; see~\cite{jacobsonLie}. Its Lie bracket is
given by
\[
[(d,x),(d',x')]
=
(d\circ d'-d'\circ d,[x,x']+d(x')-d'(x)).
\]
\end{example}

We may weaken action representability by requiring instead that, for
every object~$X$ of a semi-abelian category $\cC$, every object of
$\SplExtX$ be \emph{accessible}, that is, admit a morphism into a
\emph{subterminal} or \emph{faithful} object~\cite{act_accessible}.
Such an object admits at most one morphism from any given object.

\begin{definition}\cite{act_accessible}\label{defactacc}
A semi-abelian category $\cC$ is \emph{action accessible} if, for every
object $X$ of $\cC$, every object of $\SplExtX$ is accessible.
\end{definition}

The notion of \emph{action accessibility} was introduced by D.~Bourn
and G.~Janelidze in~\cite{act_accessible} to calculate centralisers of normal subobjects and of equivalence relations. It encompasses a wider class of categories, including categories that are not action
representable, such as $\CAssoc$ and $\Assoc$; more generally, every
Orzech category of interest is action accessible; see \Cref{SecOrz}.

\begin{remark}
By definition, the existence of a terminal object in $\SplExtX$ is
stronger than the accessibility of each of its objects. It follows
immediately that
\[
\textit{action representability}
\Rightarrow
\textit{action accessibility}.
\]
\end{remark}

\subsection{Weakly action representable categories}\label{SecWRA}

More recently, G.~Janelidze introduced the notion of a \emph{weakly action representable} category in~\cite{WRA}.

\begin{definition}\cite{WRA}\label{defWRA}
A semi-abelian category $\cC$ is said to be \emph{weakly action
representable} if, for every object $X$ of $\cC$, there exist an object
$T=T(X)$ of $\cC$ and a natural monomorphism of functors
\[
\tau\colon\SplExt(-,X)\rightarrowtail\Hom_{\cC}(-,T).
\]
The pair $(T,\tau)$ is called a \emph{weak representation} of
$\SplExt(-,X)$, the object $T$ is called a \emph{weak representing
object} of $X$, and a morphism $\varphi\colon B\to T$ belonging to
the image of $\tau_B$ is called an \emph{acting morphism}.
\end{definition}

\begin{remark}
It follows from \Cref{defWRA} that a weak representing object need not
be unique. Indeed, if $(T,\tau)$ is a weak representation of
$\SplExt(-,X)$ and $i\colon T\rightarrowtail T'$ is a monomorphism in
$\cC$, then $i$ induces a natural monomorphism of functors
\[
i_*\colon
\Hom_{\cC}(-,T)
\rightarrowtail
\Hom_{\cC}(-,T'),
\]
and $(T',i_*\circ\tau)$ is another weak representation of
$\SplExt(-,X)$.
\end{remark}

\begin{example}
Every action representable category is weakly action representable. In
this case, $T=[X]$ is the actor of $X$, the natural monomorphism
$\tau$ is a natural isomorphism, and every morphism
$\varphi\colon B\to[X]$ is an acting morphism.
\end{example}

The category $\Assoc$ of associative algebras over a field $\bF$ is a
first example of a weakly action representable category that is not
action representable; see~\cite{WRA}.

\begin{remark}\label{exassoc}
Consider a split extension in $\Assoc$
\begin{equation}\label{diag:SplAssoc}
\begin{tikzcd}
X \arrow[r, "k"]
&
A \arrow[r, shift left, "\alpha"]
&
B \arrow[l, shift left, "\beta"].
\end{tikzcd}
\end{equation}
We can define a pair of bilinear maps
\[
l\colon B\times X\to X,
\qquad
r\colon X\times B\to X
\]
by
\[
b\ast x
=
l(b,x)
=
k^{-1}\bigl(\beta(b)k(x)\bigr),
\qquad
x\ast b
=
r(x,b)
=
k^{-1}\bigl(k(x)\beta(b)\bigr),
\]
for all $b\in B$ and $x\in X$\footnote{These expressions are well defined, since $\beta(b)k(x),k(x)\beta(b)\in\ker(\alpha)$, and the corestriction $k \colon X \to \ker(\alpha)$ is an isomorphism.}. Moreover, we can endow the direct sum
of vector spaces~$B\oplus X$ with the bilinear multiplication
\begin{equation}\label{product on semi direct assoc}
(b,x)\cdot(b',x')
=
(bb',xx'+b\ast x'+x\ast b').
\end{equation}
Then $(B\oplus X,\cdot)$ is an associative algebra, called the
\emph{semidirect product} of $B$ and $X$ with respect to $(l,r)$ and
denoted by $B\ltimes X$. The diagram
\begin{equation*}
\begin{tikzcd}
X \arrow[r, "\iota_2"]
&
B\ltimes X \arrow[r, shift left, "\pi_1"]
&
B \arrow[l, shift left, "\iota_1"]
\end{tikzcd}
\end{equation*}
with $\iota_2(x)=(0,x)$, $\iota_1(b)=(b,0)$, and
$\pi_1(b,x)=b$, defines a split extension in $\Assoc$ which is
isomorphic to~\eqref{diag:SplAssoc} via
\begin{equation*}
\begin{tikzcd}
X
    \arrow[r, "\iota_2"]
    \arrow[d, equal]
&
B\ltimes X
    \arrow[r, shift left, "\pi_1"]
    \arrow[d, "\theta"']
&
B
    \arrow[l, shift left, "\iota_1"]
    \arrow[d, equal]
\\
X \arrow[r, "k"]
&
A \arrow[r, shift left, "\alpha"]
&
B \arrow[l, shift left, "\beta"].
\end{tikzcd}
\end{equation*}
Here, $\theta(b,x)=\beta(b)+k(x)$ for every $(b,x)\in B\oplus X$.

The associativity of the multiplication
in~\eqref{product on semi direct assoc} implies the following
identities:
\begin{itemize}
\item[(i)] $b\ast(xx')=(b\ast x)x'$;
\item[(ii)] $(xx')\ast b=x(x'\ast b)$;
\item[(iii)] $x(b\ast x')=(x\ast b)x'$;
\item[(iv)] $(bb')\ast x=b\ast(b'\ast x)$;
\item[(v)] $x\ast(bb')=(x\ast b)\ast b'$;
\item[(vi)] $b\ast(x\ast b')=(b\ast x)\ast b'$;
\end{itemize}
for all $b,b'\in B$ and $x,x'\in X$. The first three identities state
that the pair $(b\ast-,-\ast b)$ belongs to the associative algebra
\[
\Bim(X)
=
\left\{
(f\ast-,-\ast f)
\in
\End(X)\times\End(X)^{\op}
\,\middle|\,
\begin{aligned}
f\ast(xy) &= (f\ast x)y,\\
(xy)\ast f &= x(y\ast f),\\
x(f\ast y) &= (x\ast f)y
\end{aligned}
\, \forall x,y\in X
\right\}
\]
of \emph{bimultipliers}, or \emph{bimultiplications}, of
$X$~\cite{MacLane58}. Its bilinear multiplication is given by
\[
(f\ast-,-\ast f)\cdot(g\ast-,-\ast g)
=
\bigl(f\ast(g\ast-),(-\ast f)\ast g\bigr).
\]
Identities~(iv) and~(v) imply that the linear map
\[
B\to\Bim(X)\colon b\mapsto(b\ast-,-\ast b)
\]
is a homomorphism of associative algebras. Moreover, this homomorphism
satisfies~(vi), which states that the left multiplier $b\ast-$ and
the right multiplier $-\ast b'$ are permutable.

Conversely, let
\[
\varphi\colon B\to\Bim(X)\colon
b\mapsto(b\ast_{\varphi}-,-\ast_{\varphi}b)
\]
be an associative algebra homomorphism satisfying
$b\ast_{\varphi}(x\ast_{\varphi}b')
=(b\ast_{\varphi}x)\ast_{\varphi}b'$. We can associate with
$\varphi$ the split extension in $\Assoc$
\begin{equation*}
\begin{tikzcd}
0 \arrow[r]
&
X \arrow[r, "\iota_2"]
&
{(B\oplus X,\cdot_{\varphi})}
    \arrow[r, shift left, "\pi_1"]
&
B \arrow[l, shift left, "\iota_1"] \arrow[r]
&
0,
\end{tikzcd}
\end{equation*}
where the associative algebra structure on $B\oplus X$ is given by
\[
(b,x)\cdot_{\varphi}(b',x')
=
(bb',xx'+b\ast_{\varphi}x'+x\ast_{\varphi}b').
\]
As a consequence, we obtain the following result.
\end{remark}

\begin{theorem}\label{thmassoc}\cite{CigoliManciniMetere, WRA}
Let $B$ and $X$ be associative algebras over $\bF$.
\begin{itemize}
\item[(1)] The set of isomorphism classes of split extensions of $B$
by $X$ is in bijection with the set of associative algebra
homomorphisms
\[
B\to\Bim(X)\colon b\mapsto(b\ast-,-\ast b)
\]
satisfying
\begin{equation}\label{condition_assoc}
b\ast(x\ast b')=(b\ast x)\ast b'
\end{equation}
for all $b,b'\in B$ and $x\in X$.

\item[(2)] The category $\Assoc$ of associative algebras over $\bF$
is weakly action representable, and $\Bim(X)$
is a weak representing object of $X$.
\end{itemize}
\end{theorem}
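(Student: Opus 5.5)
Part (1) is essentially assembled in \Cref{exassoc}, and I would organise it as two explicit maps plus two checks. Starting from a split extension~\eqref{diag:SplAssoc}, I define $b\ast x=k^{-1}(\beta(b)k(x))$ and $x\ast b=k^{-1}(k(x)\beta(b))$. Associativity of $A$, applied to triple products with one or two factors in $\beta(B)$ and the rest in $k(X)$, yields identities (i)--(vi). Identities (i)--(iii) say that $b\mapsto(b\ast-,-\ast b)$ lands in $\Bim(X)$. Identities (iv)--(v) say that this map is an algebra homomorphism for the product on $\Bim(X)$, where the second component is composed in $\End(X)^{\op}$. Identity (vi) is condition~\eqref{condition_assoc}.

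I then check that isomorphic split extensions give the same map. If $(\id_X,\theta,\id_B)$ is an isomorphism, then $\theta\beta=\beta'$ and $\theta k=k'$, so $k'^{-1}(\beta'(b)k'(x))=k^{-1}(\beta(b)k(x))$, and similarly on the right. Hence the assignment is well defined on $\SplExt(B,X)$.

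Conversely, given a homomorphism $\varphi$ satisfying~\eqref{condition_assoc}, I equip $B\oplus X$ with $\cdot_\varphi$. I verify associativity by expanding $((b,x)(b',x'))(b'',x'')$ and $(b,x)((b',x')(b'',x''))$ and comparing the $X$-components term by term. After cancelling $(xx')x''=x(x'x'')$, the remaining types of terms are matched by (i)--(vi), where $\varphi$ supplies (i)--(iii) via $\Bim(X)$ and (iv)--(v) via multiplicativity, and (vi) is the extra hypothesis. This bookkeeping is the main (routine) obstacle: the point is to see that the eight terms pair off exactly with the six identities.

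The two constructions are mutually inverse. Starting from $\varphi$, the semidirect product recovers $\varphi$, since $(b,0)(0,x)=(0,b\ast_\varphi x)$. Starting from an extension, the map $\theta(b,x)=\beta(b)+k(x)$ is an isomorphism of split extensions by \Cref{exassoc}; it is a morphism of split extensions, and protomodularity makes it invertible.

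For (2), I define $\tau_B\colon\SplExt(B,X)\to\Hom_{\Assoc}(B,\Bim(X))$ by sending a class to the map of (1). It is injective because it is the bijection of (1) followed by the inclusion of the subset of homomorphisms satisfying~\eqref{condition_assoc}. For naturality, take $f\colon B'\to B$. The pullback has section $\beta'=\langle\beta f,\id_{B'}\rangle$ and kernel $k'=\langle k,0\rangle$. Computing in $A\times_B B'$ gives $\beta'(b')k'(x)=(\beta(f(b'))k(x),0)$, so the induced action is $f(b')\ast x$, and similarly on the right. Therefore $\tau_{B'}\circ f^*=\tau_B(-)\circ f$, so $\tau$ is a natural monomorphism and $\Bim(X)$ is a weak representing object, as required by \Cref{defWRA}. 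Finally, I check that $\Bim(X)$ is indeed associative, which is clear since both components multiply by composition.
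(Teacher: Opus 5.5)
Your proposal is correct and follows essentially the paper's route: the paper obtains the theorem as a consequence of \Cref{exassoc}, reading off identities (i)--(vi) from associativity of the semidirect product (isomorphic to $A$ via $\theta$) and reversing the construction, and your explicit pullback computation for naturality is the same computation later used in the proof of \Cref{intderivedact}. Two small points to tidy up. Each $X$-component of the triple product has seven terms, not eight: one is matched by associativity of $X$ and six by (i)--(vi), while the all-$B$ pattern lives in the $B$-component. Also, for $\Bim(X)$ to be an object of $\Assoc$ you need closure under the product as well as associativity; closure follows by applying the third bimultiplier identity twice, $x\ast\bigl(f\ast(g\ast y)\bigr)$ being replaced by $x\bigl(f\ast(g\ast y)\bigr)=(x\ast f)(g\ast y)=\bigl((x\ast f)\ast g\bigr)y$.
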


\begin{remark}\label{exnotacting}
A general homomorphism $B\to\Bim(X)$ need not give rise to a split
extension in $\Assoc$. For instance, let $B=\bF\{b\}$ and
$X=\bF\{e_1,e_2\}$ be abelian algebras, where $\bF\{S\}$ denotes the vector space with basis~$S$. Then
\[
\Bim(X)=\End(X)\times\End(X)^{\op},
\]
and the homomorphism
$\varphi\colon B\to\Bim(X) \colon b \mapsto (b \ast_\varphi -, - \ast_\varphi b)$ defined by
\[
b\ast_{\varphi}e_1=e_2,
\quad
b\ast_{\varphi}e_2=0,
\qquad
e_1\ast_{\varphi}b=0,
\quad
e_2\ast_{\varphi}b=e_1
\]
does not satisfy~(vi), since
\[
(b\ast_{\varphi}e_1)\ast_{\varphi}b
=
e_2\ast_{\varphi}b
=
e_1
\neq
0
=
b\ast_{\varphi}0
=
b\ast_{\varphi}(e_1\ast_{\varphi}b).
\]
Condition~\eqref{condition_assoc} can nevertheless be used to
characterise the acting morphisms in $\Assoc$. In~\cite{MacLane58},
S.~Mac Lane described, for a ring $\Lambda$, the
$\Lambda$-bimodule structures on an abelian group $K$ in terms of ring
homomorphisms from $\Lambda$ to the ring of bimultipliers of $K$. The
following result is a straightforward generalisation to actions on an
object that is not necessarily abelian.
\end{remark}

\begin{proposition}\cite{CigoliManciniMetere}\label{actmorpAssoc}
Let $B$ and $X$ be associative algebras over $\bF$, and let
\[
\varphi\colon B\to\Bim(X)\colon
b\mapsto(b\ast_{\varphi}-,-\ast_{\varphi}b)
\]
be a morphism in $\Assoc$. Then $\varphi$ is an \emph{acting morphism}
if and only if
\begin{equation}\label{eqAssoc2}
b\ast_{\varphi}(x\ast_{\varphi}b')
=
(b\ast_{\varphi}x)\ast_{\varphi}b'
\end{equation}
for all $b,b'\in B$ and $x\in X$.
\end{proposition}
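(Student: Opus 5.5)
The plan is to make precise what ``acting morphism'' means for the weak representation $\tau$ of \Cref{thmassoc}(2). For a split extension of $B$ by $X$, $\tau_B$ sends its isomorphism class to the homomorphism $b\mapsto(b\ast-,-\ast b)$ built in \Cref{exassoc}. So $\varphi$ is acting exactly when it is of this form for some split extension. Both implications then follow from the computations already made in \Cref{exassoc}.

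\emph{Necessity.} Suppose $\varphi=\tau_B([\,X\to A\rightleftarrows B\,])$. By \Cref{exassoc}, the extension is isomorphic to the semidirect product $B\ltimes X$ on $B\oplus X$, with product~\eqref{product on semi direct assoc} built from $b\ast_\varphi x$ and $x\ast_\varphi b$. This product is associative. Compare $(b,0)\cdot\bigl((0,x)\cdot(b',0)\bigr)$ with $\bigl((b,0)\cdot(0,x)\bigr)\cdot(b',0)$ and read off the $X$-component. This gives identity~(vi), which is exactly~\eqref{eqAssoc2}.

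\emph{Sufficiency.} Let $\varphi$ be a homomorphism satisfying~\eqref{eqAssoc2}. Define $\cdot_\varphi$ on $B\oplus X$ as at the end of \Cref{exassoc}. The main step is to check that $\cdot_\varphi$ is associative. Expand $\bigl((b,x)(b',x')\bigr)(b'',x'')-(b,x)\bigl((b',x')(b'',x'')\bigr)$ by trilinearity into the eight cases according to which arguments lie in $B$ and which in $X$.
\begin{itemize}
\item The case $BBB$ is associativity of $B$, and the case $XXX$ is associativity of $X$.
\item The cases $BXX$, $XXB$ and $XBX$ are the three bimultiplier axioms (i)--(iii) defining $\Bim(X)$, which hold because $\varphi(b)\in\Bim(X)$.
\item The cases $BBX$ and $XBB$ are (iv) and (v). They follow from $\varphi$ being an algebra homomorphism into $\Bim(X)$ with the product $(f\ast(g\ast-),(-\ast f)\ast g)$. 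Some care is needed with the $\End(X)^{\op}$ convention, so that the right component gives $x\ast(bb')=(x\ast b)\ast b'$.
\item The remaining case $BXB$ is precisely~\eqref{eqAssoc2}.
\end{itemize}
So $(B\oplus X,\cdot_\varphi)$ is associative. Then $\iota_2,\pi_1,\iota_1$ are algebra maps with $\pi_1\iota_1=\id_B$ and $\ker\pi_1=\iota_2(X)$, so they form a split extension in $\Assoc$.

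Finally, apply $\tau_B$ to this split extension. We get $k^{-1}\bigl(\iota_1(b)\cdot_\varphi\iota_2(x)\bigr)=b\ast_\varphi x$, and similarly on the right. Hence $\tau_B$ returns $\varphi$, and $\varphi$ is acting.

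The only real work is the sufficiency step: carrying out the eight-case bookkeeping cleanly and using the $\op$ convention on the right component correctly. No genuine obstacle is expected there.
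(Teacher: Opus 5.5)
Your proposal is correct and takes essentially the same route as the paper. The paper obtains the statement from \Cref{exassoc}: associativity of the product on $B\oplus X$ is equivalent to identities (i)--(vi), where (i)--(iii) say $\varphi(b)\in\Bim(X)$, (iv)--(v) say $\varphi$ is a homomorphism, and (vi) is exactly~\eqref{eqAssoc2}. Your eight-case trilinear expansion spells out the converse direction, which the paper states without detailed verification.
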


As observed in \Cref{exnotacting}, not every homomorphism
$B\to\Bim(X)$ is an acting morphism. Nevertheless, there are
associative algebras $X$ for which condition~\eqref{eqAssoc2} is
automatically satisfied~\cite{Casas}. This is the case, for instance,
when the \emph{annihilator}
\[
\Ann(X)=\{x\in X\mid xy=yx=0,\, \forall y\in X\}
\]
is trivial, or when $X$ is \emph{perfect}, that is, $X^2=X$, where
$X^2$ denotes the subspace spanned by $\{xy\mid x,y\in X\}$. Under
either assumption, one has
\begin{equation}\label{eqAssoc}
f\ast(x\ast f')=(f\ast x)\ast f'
\end{equation}
for all
$(f\ast-,-\ast f),(f'\ast-,-\ast f')\in\Bim(X)$ and $x\in X$.

Indeed, suppose first that $\Ann(X)=0$ and set
\[
z=f\ast(x\ast f')-(f\ast x)\ast f'.
\]
For every $y\in X$, the defining identities of bimultipliers give
\begin{align*}
\bigl(f\ast(x\ast f')\bigr)y
 &=f\ast\bigl((x\ast f')y\bigr)\\
 &=f\ast\bigl(x(f'\ast y)\bigr)\\
 &=(f\ast x)(f'\ast y)\\
 &=\bigl((f\ast x)\ast f'\bigr)y.
\end{align*}
Thus, $zy=0$ for every $y\in X$. A similar computation shows that
$yz=0$ for every~$y\in X$, and hence $z\in\Ann(X)=0$. Therefore,
\eqref{eqAssoc} holds.

Suppose now that $X$ is perfect. For all $x,y\in X$, one has
\begin{align*}
f\ast\bigl((xy)\ast f'\bigr)
 &=f\ast\bigl(x(y\ast f')\bigr)\\
 &=(f\ast x)(y\ast f')\\
 &=\bigl((f\ast x)y\bigr)\ast f'\\
 &=\bigl(f\ast(xy)\bigr)\ast f'.
\end{align*}
Since $X$ is spanned by products, bilinearity implies that
\eqref{eqAssoc} holds for every element of $X$.

Thus, in either case, for every associative algebra $B$, each
homomorphism
\[
\varphi\colon B\to\Bim(X)
\]
is an acting morphism. Hence, there is a natural isomorphism of
functors
\[
\SplExt(-,X)
\cong
\Hom_{\Assoc}(-,\Bim(X)),
\]
so that $\Bim(X)$ is the actor of $X$.

\begin{remark}\cite{WRA}
Let $\cC$ be a weakly action representable category. It follows from
\Cref{defWRA} that, given two objects of $\SplExtX$ and a morphism
between them
\begin{equation*}
\begin{tikzcd}
X
    \arrow[r, "k"]
    \arrow[d, equal]
&
A
    \arrow[r, shift left, "\alpha"]
    \arrow[d, "\theta"']
&
B
    \arrow[l, shift left, "\beta"]
    \arrow[d, "f"]
\\
X \arrow[r, "k'"]
&
A' \arrow[r, shift left, "\alpha'"]
&
B' \arrow[l, shift left, "\beta'"],
\end{tikzcd}
\end{equation*}
if $\varphi\colon B\to T$ and $\varphi'\colon B'\to T$ are the
corresponding acting morphisms, where $T$ is a weak representing
object of $X$, then $\varphi=\varphi'\circ f$. This follows immediately
from the commutativity of the diagram
\[
	\vcenter{\xymatrix{
			\SplExt(B',X)  \ar[d]_-{f^*} \ar[r]^-{\tau_{B'}} & \Hom_\cC(B',T)  \ar[d]^-{-\circ f}\\
			\SplExt(B,X)  \ar[r]_-{\tau_B} & \Hom_\cC(B,T).
	}}
	\]
\end{remark}

An important observation made by G.~Janelidze in~\cite{WRA} is the following. 

\begin{theorem}\cite{WRA}\label{WRAimpliesAA}
Every weakly action representable category $\cC$ is action accessible.
\end{theorem}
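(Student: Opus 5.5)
Fix an object $X$ of $\cC$ and a weak representation $(T,\tau)$ of $\SplExt(-,X)$. By \Cref{defactacc}, we must show that every object of $\SplExtX$ admits a morphism into a faithful object, namely one admitting at most one morphism from any given object of $\SplExtX$. Given a split extension $E=(X\xrightarrow{k}A\rightleftarrows B)$ with class $[E]\in\SplExt(B,X)$, let $\varphi=\tau_B([E])\colon B\to T$ be its acting morphism. The plan is to factor $\varphi$ through its image and pull back along the image inclusion, so as to obtain a split extension over a subobject of $T$ that receives a morphism from $E$ and is faithful.

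First, since $\cC$ is regular, factor $\varphi=m\circ q$ with $q\colon B\twoheadrightarrow M$ a regular epimorphism and $m\colon M\rightarrowtail T$ a monomorphism. Second, the subfunctor $\operatorname{im}\tau\subseteq\Hom_{\cC}(-,T)$ must be shown to contain $m$, which yields a split extension $E_M$ over $M$ with $\tau_M([E_M])=m$. This is the step I expect to be the main obstacle. If it fails in general, the fallback is to take $E_M$ to be the quotient of $E$ by the image of $\ker q$ under $\beta$, or more precisely by the normal closure of that image. For this to work, one needs $\beta(\ker q)$ to intersect $k(X)$ trivially in an appropriate sense, so that the kernel stays $X$. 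In the semi-abelian setting this amounts to checking that $\ker q$ acts trivially on $X$. That in turn follows by naturality: the restriction of $E$ along $\ker q\to B$ has acting morphism $0$, which equals the acting morphism of the trivial (product) extension, and injectivity of $\tau$ shows the restricted extension is trivial. Once $E_M$ exists, the naturality square for $q$ together with injectivity of $\tau_B$ gives $[E]=q^*[E_M]$, and hence a morphism $(\id_X,\theta,q)\colon E\to E_M$ in $\SplExtX$.

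Third, show that $E_M$ is faithful. Let $E'$ be a split extension over $B'$ and let $(\id_X,\theta_i,g_i)\colon E'\to E_M$, $i=1,2$, be two morphisms. By the remark following \Cref{actmorpAssoc}, $m\circ g_1=\tau_{B'}([E'])=m\circ g_2$. Since $m$ is a monomorphism, $g_1=g_2$. Then $\theta_1=\theta_2$, because $\theta$ is determined by $f=\id_X$ and $g$, using that $(k',\beta')$ is jointly strongly epimorphic. Hence $E_M$ is faithful, every object of $\SplExtX$ is accessible, and $\cC$ is action accessible.
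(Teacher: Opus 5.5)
Your architecture is the paper's: the image factorisation $\varphi=m\circ q$, pulling back along $q$ the extension classified by $m$, and faithfulness of that extension because $m$ is monic. Your third step is exactly \Cref{lemmaWRAimpliesAA}. The gap is your second step, which carries the real content of the theorem and which you leave open. The paper settles it with \Cref{lemma2WRAimpliesAA}: for a regular epimorphism $q$, the naturality square of $\tau$ at $q$ is a pullback \cite[Proposition~4.4]{WRA}, so $m$ is an acting morphism.

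Your fallback does not replace this. Its key claim is that, in a semi-abelian category, triviality of the action of $K=\ker q$ on $X$ forces the normal closure $N$ of $\beta(K)$ to meet $k(X)$ trivially. This is false in general. Take the semi-abelian variety of $\bF$-vector spaces with one trilinear operation $t$ and no identities. Let $B=\bF\{\kappa,b\}$ and $X=\bF\{x,y\}$ carry $t=0$, and let $A=B\oplus X$ have $t(\kappa,b,x)=y$ as its only non-zero value on triples of basis vectors. Then $K=\bF\{\kappa\}$ is an ideal of $B$, and $t$ vanishes on $\alpha^{-1}(K)=\bF\{\kappa,x,y\}$, so the restricted extension is the product $K\times X$. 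Yet $y=t(\kappa,b,x)\in N\cap X$, so the kernel of $A/N\to B/K$ is the proper quotient $X/\bF\{y\}$. Restriction along $K\to B$ cannot detect mixed terms involving elements of $K$, of $B$ and of $X$ simultaneously. Injectivity of $\tau_K$ is the only input from the weak representation that your fallback uses, so it cannot close the gap.

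The missing idea is to use injectivity of $\tau$ at an object that does detect such terms, namely the kernel pair $(R,\pi_1,\pi_2)$ of $q$. Since $\varphi\circ\pi_1=m\circ q\circ\pi_1=m\circ q\circ\pi_2=\varphi\circ\pi_2$, naturality and injectivity of $\tau_R$ give $\pi_1^*[E]=\pi_2^*[E]$. That is, there is an isomorphism $\pi_1^*E\cong\pi_2^*E$ fixing $X$ and $R$. Such an isomorphism is unique, since a morphism of split extensions is determined by its components on kernel and base, so the cocycle conditions hold automatically. Because $\cC$ is Barr-exact, $q$ is an effective descent morphism, and so $E\cong q^*E_M$ for some split extension $E_M$ of $M$ by $X$. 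Then $\tau_M([E_M])\circ q=\tau_B([E])=m\circ q$, and since $q$ is an epimorphism, $\tau_M([E_M])=m$. This is the content of \cite[Proposition~4.4]{WRA}; with it, the rest of your argument goes through as written.
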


To prove this result, we need two preliminary lemmas. We include their proofs, since in~\cite{WRA} they are only sketched.

\begin{lemma}\cite{WRA}\label{lemmaWRAimpliesAA}
Let $\cC$ be a weakly action representable category, and let
\begin{equation}\label{spltextlemma}
\begin{tikzcd}
X \arrow[r, "k"]
&
A \arrow[r, shift left, "\alpha"]
&
B \arrow[l, shift left, "\beta"]
\end{tikzcd}
\end{equation}
be a split extension in $\cC$ with associated acting morphism
$\varphi\colon B\to T$, where $T$ is a weak representing object of
$X$. If $\varphi$ is a monomorphism, then the split
extension~\eqref{spltextlemma} is faithful.
\end{lemma}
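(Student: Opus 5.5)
The plan is to check directly the definition of a faithful object of $\SplExtX$: it admits at most one morphism from any given object. So fix an arbitrary object of $\SplExtX$,
\[
\begin{tikzcd}
X \arrow[r, "k'"]
&
A' \arrow[r, shift left, "\alpha'"]
&
B' \arrow[l, shift left, "\beta'"],
\end{tikzcd}
\]
and two morphisms $(\id_X,\theta_1,f_1)$ and $(\id_X,\theta_2,f_2)$ from it into the split extension~\eqref{spltextlemma}. I will show that $f_1=f_2$ and then that $\theta_1=\theta_2$.

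For the first equality, let $\varphi'\colon B'\to T$ be the acting morphism associated with the given extension of $B'$ by $X$, i.e.\ the image of its class under $\tau_{B'}$. Both triples $(\id_X,\theta_i,f_i)$ are morphisms in $\SplExtX$, so the remark preceding \Cref{WRAimpliesAA} applies to each of them. It is a consequence of the naturality square of $\tau$ with respect to $f_i$, together with the fact that $f_i^*$ sends the class of~\eqref{spltextlemma} to the class of the given extension. This yields
\[
\varphi'=\varphi\circ f_1
\qquad\text{and}\qquad
\varphi'=\varphi\circ f_2 .
\]
Hence $\varphi\circ f_1=\varphi\circ f_2$. Since $\varphi$ is a monomorphism by hypothesis, we conclude that $f_1=f_2$.

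For the second equality, set $f=f_1=f_2$. Each $\theta_i$ satisfies
\[
\theta_i\circ k'=k
\qquad\text{and}\qquad
\theta_i\circ\beta'=\beta\circ f .
\]
Because the pair $(k',\beta')$ is jointly strongly epimorphic, and in particular jointly epimorphic, a morphism out of $A'$ is determined by its composites with $k'$ and $\beta'$. This is the fact recalled earlier that $\theta$ is uniquely determined by the outer components. Therefore $\theta_1=\theta_2$, so the two morphisms coincide and the split extension~\eqref{spltextlemma} is faithful.

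The only point requiring care is the justification of $\varphi'=\varphi\circ f_i$. This identity comes from naturality of $\tau$ once one knows that the pullback of~\eqref{spltextlemma} along $f_i$ is isomorphic to the given extension. That in turn holds by protomodularity, since the right-hand square of a morphism of split extensions that is the identity on kernels is a pullback. This is exactly the content of the quoted remark, so it can be invoked as stated.
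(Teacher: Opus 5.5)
Your proof is correct and follows essentially the same route as the paper's: naturality of $\tau$ gives $\varphi'=\varphi\circ f_1=\varphi\circ f_2$, so $f_1=f_2$ because $\varphi$ is a monomorphism. Then the joint strong epimorphicity of $(k',\beta')$ forces $\theta_1=\theta_2$. You also spell out why $\varphi'=\varphi\circ f_i$ holds (the identity-on-kernels pullback by protomodularity), which the paper leaves to the preceding remark.
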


\begin{proof}
Let
\begin{equation}\label{splextlemma}
\begin{tikzcd}
X \arrow[r, "k'"]
&
A' \arrow[r, shift left, "\alpha'"]
&
B' \arrow[l, shift left, "\beta'"]
\end{tikzcd}
\end{equation}
be another split extension in $\cC$, and suppose that there exist two
morphisms of split extensions
\begin{equation}\label{morphismsplexts}
\begin{tikzcd}
X
    \arrow[r, "k'"]
    \arrow[d, equal]
&
A'
    \arrow[r, shift left, "\alpha'"]
    \arrow[d, shift right=.75ex, "\theta"']
    \arrow[d, shift left=.75ex, "\theta'"]
&
B'
    \arrow[l, shift left, "\beta'"]
    \arrow[d, shift right=.75ex, "f"']
    \arrow[d, shift left=.75ex, "g"]
\\
X \arrow[r, "k"]
&
A \arrow[r, shift left, "\alpha"]
&
B \arrow[l, shift left, "\beta"].
\end{tikzcd}
\end{equation}
If $\varphi'\colon B'\to T$ is the acting morphism associated with
\eqref{splextlemma}, then
\[
\varphi'=\varphi\circ f=\varphi\circ g. 
\]
Since $\varphi$ is a
monomorphism, it follows that $f=g$. Since $(k',\beta')$ is jointly
strongly epimorphic, we also obtain $\theta=\theta'$. Therefore, \eqref{spltextlemma} is a faithful object of $\SplExtX$.
\end{proof}

\begin{lemma}\cite{WRA}\label{lemma2WRAimpliesAA}
Let $\cC$ be a weakly action representable category, let $B,B'$ and~$X$ be objects of $\cC$, and let $T$ be a weak representing object of
$X$. If $f\colon B\twoheadrightarrow B'$ is a regular epimorphism,
then a morphism $\varphi'\colon B'\to T$ is an acting morphism whenever
$\varphi'\circ f$ is an acting morphism.
\end{lemma}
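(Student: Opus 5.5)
Let $\varphi'\circ f$ be an acting morphism. Then it is $\tau_B$ of the class of some split extension of $B$ by $X$,
\[
X\xrightarrow{k}A\rightleftarrows B\quad(\alpha,\beta).
\]
The plan is to descend this split extension along $f$ to a split extension of $B'$ by $X$, say $(A',\alpha',\beta')$, with $f^*$ of its class equal to the class of $(A,\alpha,\beta)$. Naturality of $\tau$ then gives
\[
\tau_{B'}[A']\circ f=\tau_B(f^*[A'])=\tau_B[A]=\varphi'\circ f.
\]
Since $f$ is a regular epimorphism, hence an epimorphism, this yields $\tau_{B'}[A']=\varphi'$, so $\varphi'$ is acting.

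To build $A'$, take the kernel pair $R=B\times_{B'}B$ of $f$, with projections $r_1,r_2$. The two pullbacks $r_1^*[A]$ and $r_2^*[A]$ are split extensions of $R$ by $X$. Their acting morphisms are
\[
\varphi'\circ f\circ r_1=\varphi'\circ f\circ r_2 .
\]
Since $\tau_R$ is injective, $r_1^*[A]=r_2^*[A]$. So there is an isomorphism of split extensions between $A\times_{B}^{r_1}R$ and $A\times_B^{r_2}R$ which is the identity on $X$ and $R$.

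This gives an equivalence relation $S$ on $A$: the pairs $(a_1,a_2)$ lying over some $(b_1,b_2)\in R$ and corresponding under that isomorphism. Concretely, $S$ is the image of a morphism $A\times_B^{r_1}R\to A\times A$, built from the first projection and the isomorphism followed by the second projection. The plan is to show that $S$ is an internal equivalence relation on $A$. It is reflexive because the isomorphism is unique and restricts along the diagonal $B\to R$ to the identity. Symmetry and transitivity come similarly from uniqueness of morphisms of split extensions over $R^{\op}$ and $R\times_B R$; this is again injectivity of $\tau$, plus the fact that morphisms of split extensions fixing $X$ and the base are unique isomorphisms.

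Then set $A'=A/S$, which exists by Barr-exactness. The morphisms $\alpha$ and $\beta$ descend: $\alpha$ maps $S$ onto $R$, and $\beta\times\beta$ maps $R$ into $S$ by construction. This gives $\alpha'\colon A'\to B'$ and $\beta'$ with $\alpha'\beta'=\id$. The kernel of $\alpha'$ is $X$, because $S$ restricted to $X$ is trivial: the isomorphism is the identity on $X$. The square $A\to A'$ over $f$ is then a pullback, by the standard descent fact in exact protomodular categories: a morphism of equivalence relations $S\to R$ whose restriction to kernels is an isomorphism yields a pullback of quotients. Hence $f^*[A']=[A]$.

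The main obstacle is this descent step: checking that $S$ is really an equivalence relation and that the induced square is a pullback with kernel exactly $X$. One alternative is to phrase it as effective descent of split extensions along regular epimorphisms in a semi-abelian category, which is known: regular epis are effective descent for the fibration of points. The other is to pass through internal actions via \Cref{actspltnatural}, where descending an action $\xi\colon B\flat X\to X$ along $f$ amounts to factoring $\xi$ through the regular epi $f\flat X$. I would try the action formulation first: show $\xi$ coequalises the kernel pair of $f\flat X$ using the equality of the two pulled-back actions, and verify that the factorised map satisfies the monad-algebra axioms because $f\flat(f\flat X)$ is also epic.
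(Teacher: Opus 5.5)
Your argument is correct, but it takes a different route from the paper's. The paper's proof is a one-line citation: by \cite[Proposition~4.4]{WRA}, the naturality square of $\tau$ along a regular epimorphism $f$ is a pullback of sets. The pair consisting of $[A]$ and $\varphi'$, with $\varphi'\circ f=\tau_B[A]$, therefore immediately yields a split extension of $B'$ by $X$ with acting morphism $\varphi'$.

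You instead prove the existence half of that pullback property directly:
\begin{itemize}
\item injectivity of $\tau_R$ on the kernel pair $R$ of $f$ gives $r_1^*[A]=r_2^*[A]$;
\item the unit and cocycle conditions hold automatically, because a morphism of split extensions that is the identity on $X$ and on the base is unique;
\item effective descent of points along regular epimorphisms in a Barr-exact category produces $[A']$ with $f^*[A']=[A]$ (equivalently, your explicit quotient $A/S$ together with the Barr--Kock-type pullback lemma);
\item cancelling the epimorphism $f$ then gives $\tau_{B'}[A']=\varphi'$.
\end{itemize}
This is essentially the standard proof of the cited proposition. Both $\SplExt(-,X)$ and $\Hom_{\cC}(-,T)$ send $R\rightrightarrows B\to B'$ to equaliser diagrams, and a natural monomorphism between equaliser diagrams makes the relevant square a pullback. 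The paper's version is shorter. Yours is self-contained and shows exactly where Barr-exactness (descent) and the regularity of $f$ (kernel pair as coequaliser, epi cancellation) are used.

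One caveat concerns the action-theoretic variant you say you would try first. From $\xi\circ(r_1\flat X)=\xi\circ(r_2\flat X)$ you only get that $\xi$ coequalises the pair $R\flat X\rightrightarrows B\flat X$. To conclude that $\xi$ coequalises the kernel pair of $f\flat X$, you also need the comparison $R\flat X\to\mathrm{Eq}(f\flat X)$ to be an epimorphism. This does hold in a semi-abelian category. First, $R+X\to\mathrm{Eq}(f+X)$ is a regular epimorphism: its image is a reflexive, hence effective, equivalence relation whose quotient is $f+X$. Restricting this regular epimorphism to kernels over $R$ gives the comparison map. Still, it is an extra step that your sketch does not address. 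The split-extension descent route, or a direct appeal to effective descent for the fibration of points, avoids it.
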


\begin{proof}
Let $\varphi\coloneqq\varphi'\circ f$ be an acting morphism, and let
\begin{equation}\label{splext2lemma2}
\begin{tikzcd}
X \arrow[r, "k"]
&
A \arrow[r, shift left, "\alpha"]
&
B \arrow[l, shift left, "\beta"]
\end{tikzcd}
\end{equation}
be a split extension of $B$ by $X$ whose associated acting morphism is~$\varphi$. By~\cite[Proposition~4.4]{WRA}, since $f$ is a regular epimorphism, the diagram
\[
   \vcenter{\xymatrix{
   		\SplExt(B',X)  \ar[d]_-{f^*} \ar[r]^-{\tau_{B'}} & \Hom_\cC(B',T)  \ar[d]^-{-\circ f}\\
   		\SplExt(B,X)  \ar[r]_-{\tau_B} & \Hom_\cC(B,T)
   }}
   \]
is a pullback. Therefore, there exists a split extension
\begin{equation}\label{splextlemma2}
\begin{tikzcd}
0 \arrow[r]
&
X \arrow[r, "k'"]
&
A' \arrow[r, shift left, "\alpha'"]
&
B' \arrow[l, shift left, "\beta'"] \arrow[r]
&
0
\end{tikzcd}
\end{equation}
such that $f^*$ sends the isomorphism class of
\eqref{splextlemma2} to that of \eqref{splext2lemma2}, and
$\varphi'$ is the acting morphism associated with
\eqref{splextlemma2}.
\end{proof}

We are now ready to prove \Cref{WRAimpliesAA}.

\begin{proof}
Let $X$ be an object of $\cC$, and let
\[
\tau\colon
\SplExt(-,X)
\rightarrowtail
\Hom_{\cC}(-,T)
\]
be a weak representation of $\SplExt(-,X)$. Let
\begin{equation*}
\begin{tikzcd}
X \arrow[r, "k"]
&
A \arrow[r, shift left, "\alpha"]
&
B \arrow[l, shift left, "\beta"]
\end{tikzcd}
\end{equation*}
be a split extension in $\cC$ with associated acting morphism
$\varphi\colon B\to T$. We construct a morphism of split extensions
\begin{equation}\label{morphismsplext}
\begin{tikzcd}
X
    \arrow[r, "k"]
    \arrow[d, equal]
&
A
    \arrow[r, shift left, "\alpha"]
    \arrow[d, "\theta"']
&
B
    \arrow[l, shift left, "\beta"]
    \arrow[d, "f"]
\\
X \arrow[r, "k'"]
&
A' \arrow[r, shift left, "\alpha'"]
&
B' \arrow[l, shift left, "\beta'"]
\end{tikzcd}
\end{equation}
with faithful codomain. Let $B'=\varphi(B)$, and let
$f\colon B\twoheadrightarrow B'$ be the corestriction of $\varphi$ to
its image. Thus, $\varphi=\varphi'\circ f$, where
$\varphi'\colon B'\rightarrowtail T$ is given by the
\emph{(regular epi, mono)-factorisation} of $\varphi$.

By \Cref{lemma2WRAimpliesAA}, $\varphi'$ is an acting morphism and
therefore determines a split extension
\begin{equation}\label{splext2}
\begin{tikzcd}
X \arrow[r, "k'"]
&
A' \arrow[r, shift left, "\alpha'"]
&
B' \arrow[l, shift left, "\beta'"].
\end{tikzcd}
\end{equation}
Since $\varphi=\varphi'\circ f$, naturality and injectivity of $\tau$
show that the original split extension is isomorphic to the pullback
of~\eqref{splext2} along $f$. The pullback projection therefore gives
the required morphism $\theta$ in~\eqref{morphismsplext}.

Finally, \eqref{splext2} is faithful because its acting morphism
$\varphi'$ is a monomorphism, by \Cref{lemmaWRAimpliesAA}. Hence,
$\cC$ is action accessible.
\end{proof}

We have therefore established the implications
\[
\textit{action representability}
\Rightarrow
\textit{weak action representability}
\Rightarrow
\textit{action accessibility}.
\]

A first class of examples of categories that are action accessible but
not weakly action representable was provided by J.~R.~A.~Gray
in~\cite{Gray}. He showed that weak action representability is related
to the \emph{amalgamation property}~(AP)~\cite{kiss}.

Recall that a span of monomorphisms
$m\colon S\rightarrowtail B$ and
$m'\colon S\rightarrowtail B'$ in a category $\cC$ admits an
amalgam in $\cC$ if there exist monomorphisms
$u\colon B\rightarrowtail D$ and
$u'\colon B'\rightarrowtail D$ in $\cC$ such that
$u\circ m=u'\circ m'$. Using the (AP), J.~R.~A.~Gray obtained
sufficient conditions under which a Birkhoff subcategory of an action
representable category is not weakly action representable.

\begin{proposition}\cite{Gray}\label{Gray}
Let $\cC$ be an action representable category, and let $\mathcal{X}$
be a Birkhoff subcategory of $\cC$. Suppose that there exist
monomorphisms
$m\colon S\rightarrowtail B$ and
$m'\colon S\rightarrowtail B'$ in $\mathcal{X}$, monomorphisms
$u\colon B\rightarrowtail D$ and
$u'\colon B'\rightarrowtail D$ in $\cC$, an object~$X$ of
$\mathcal{X}$, and a monomorphism $v\colon D\rightarrowtail[X]$ such
that:
\begin{itemize}
\item[(i)] $m$ and $m'$ cannot be amalgamated in $\mathcal{X}$;
\item[(ii)] $u\circ m=u'\circ m'$;
\item[(iii)] the split extensions with kernel $X$ corresponding to
$v \circ u$ and $v \circ u'$ belong to $\mathcal{X}$.
\end{itemize}
Then $\mathcal{X}$ is not weakly action representable.
\end{proposition}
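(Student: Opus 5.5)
We argue by contradiction: suppose $\mathcal{X}$ is weakly action representable, and let $(T,\tau)$ be a weak representation of $\SplExt_{\mathcal{X}}(-,X)$. The idea is to produce, inside $\mathcal{X}$, an amalgam of $m$ and $m'$, contradicting~(i). Since $\mathcal{X}$ is a Birkhoff subcategory, it is closed under subobjects, quotients and products in $\cC$. Hence monomorphisms, kernels and pullbacks of split extensions in $\mathcal{X}$ are computed as in $\cC$, and $\SplExt_{\mathcal{X}}(B,X)$ is the subset of $\SplExt_{\cC}(B,X)$ consisting of extensions whose middle object lies in $\mathcal{X}$.

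First, by~(iii), the morphisms $v\circ u\colon B\to[X]$ and $v\circ u'\colon B'\to[X]$ correspond, via the actor of $X$ in $\cC$, to split extensions $E\in\SplExt_{\mathcal{X}}(B,X)$ and $E'\in\SplExt_{\mathcal{X}}(B',X)$. Applying $\tau$ gives acting morphisms $\varphi=\tau_B(E)\colon B\to T$ and $\varphi'=\tau_{B'}(E')\colon B'\to T$ in $\mathcal{X}$. Next we pull back along $S$. Condition~(ii) gives $v\circ u\circ m=v\circ u'\circ m'$. Naturality of the isomorphism $\SplExt_{\cC}(-,X)\cong\Hom_{\cC}(-,[X])$ then shows that $m^*E\cong m'^*E'$ in $\SplExt_{\cC}(S,X)$, hence also in $\SplExt_{\mathcal{X}}(S,X)$. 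Naturality of $\tau$ now yields
\[
\varphi\circ m=\tau_S(m^*E)=\tau_S(m'^*E')=\varphi'\circ m'.
\]

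The key remaining step, and the main obstacle, is to show that $\varphi$ and $\varphi'$ are monomorphisms. Then $(\varphi,\varphi')$ is an amalgam of $(m,m')$ in $\mathcal{X}$, contradicting~(i). We argue for $\varphi$; the case of $\varphi'$ is identical. Let $g,h\colon C\to B$ in $\mathcal{X}$ satisfy $\varphi\circ g=\varphi\circ h$. Naturality of $\tau$ gives $\tau_C(g^*E)=\tau_C(h^*E)$, and since $\tau_C$ is injective, $g^*E\cong h^*E$ in $\SplExt_{\mathcal{X}}(C,X)$, hence in $\SplExt_{\cC}(C,X)$. Transporting through the actor of $\cC$, we obtain $v\circ u\circ g=v\circ u\circ h$. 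Since $v\circ u$ is a monomorphism in $\cC$, this gives $g=h$. The delicate points are checking that the isomorphism classes agree in $\mathcal{X}$ and in $\cC$, and that pullbacks in $\mathcal{X}$ agree with those in $\cC$; both follow from the closure properties of a Birkhoff subcategory recalled above.

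Finally, monomorphisms in $\mathcal{X}$ are exactly the monomorphisms of $\cC$ lying in $\mathcal{X}$, because $\mathcal{X}$ is closed under kernel pairs. So $\varphi$ and $\varphi'$ are genuine monomorphisms into $T\in\mathcal{X}$ amalgamating $m$ and $m'$. This contradicts~(i), so $\mathcal{X}$ is not weakly action representable.
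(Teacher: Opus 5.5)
Your argument is correct. It is essentially the argument of \cite{Gray}, which this survey cites without reproducing: naturality of $\tau$ and of the actor isomorphism, together with (ii), gives $\varphi\circ m=\varphi'\circ m'$, and injectivity of $\tau_C$ and of $\SplExt_{\mathcal{X}}(C,X)\subseteq\SplExt_{\cC}(C,X)$ lets the acting morphisms $\varphi,\varphi'$ inherit monicity from $v\circ u$ and $v\circ u'$, yielding an amalgam in $\mathcal{X}$ that (i) forbids. Your closing remark that monomorphisms of $\mathcal{X}$ coincide with those of $\cC$ is true but not needed, since you already tested monicity against objects $C$ of $\mathcal{X}$.
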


Using an example of B.~H.~Neumann consisting of an abelian group $S$,
a $2$-nilpotent group $B$, and two monomorphisms
$m,m'\colon S\rightarrowtail B$ that cannot be amalgamated in any
solvable group $D$~\cite{Neumann}, one can apply \Cref{Gray} to prove
that the categories of $n$-solvable groups are not weakly action
representable for any $n\geq3$. This result was then adapted
in~\cite{XabiMancini} to show that the categories of $k$-nilpotent
groups, for $k\geq3$, and of $2$-solvable groups are not weakly action
representable. The same article also proves that the varieties of $k$-nilpotent
Lie algebras, for $k\geq3$, and of $n$-solvable Lie algebras, for
$n\geq2$, are not weakly action representable. The situation is different when $k=2$: over a field of characteristic different from~$2$, the variety of $2$-nilpotent Lie algebras is weakly action representable
by~\cite[Theorem~2.21]{WRAAlg}, while a proof of the weak action representability of the category of $2$-nilpotent groups can be found in~\cite{DB-Mancini-Tim}.

\begin{remark}\label{remobjects}
Action representability and weak action representability are properties of the whole category~$\cC$: they require a representation, or a weak representation, of $\SplExt(-,X)$ for every object~$X$ of~$\cC$. For a single object~$X$, one may still ask whether $\SplExt(-,X)$ is representable, that is, whether $X$ admits an actor. A category which is not action representable may contain many such objects. For instance, an object $X$ of $\CAssoc$ admits an actor if and only if $\M(X)$ is commutative~\cite[Theorem~2.6]{BJK2}, and every associative algebra $X$ with $\Ann(X)=0$, or with $X^2=X$, has actor $\Bim(X)$; see the discussion following \Cref{actmorpAssoc}. More generally, the objects of an Orzech category of interest which admit an actor are characterised in terms of their universal strict general actor; see \Cref{SecUSGA}. On the other hand, to show that a category is not weakly action representable, it suffices to exhibit a single object~$X$ for which $\SplExt(-,X)$ admits no weak representation. This is the strategy underlying \Cref{Gray}.
\end{remark}

\section{Derived actions in Orzech categories of interest}\label{SecOrz}

In this section, we describe internal actions and their representability in \emph{Orzech categories of interest}~\cite{Orzech}.

Let $\cC$ be a \emph{category of groups with a set of operations}
$\Omega$ and a set of identities $\mathbb{E}$, where $\mathbb{E}$
contains the group identities. If $\Omega_i$ denotes the subset of
$i$-ary operations in $\Omega$, we require the following conditions:
\begin{itemize}
\item[(i)] $\Omega=\Omega_0\cup\Omega_1\cup\Omega_2$.

\item[(ii)] The group operations, written additively as $0$, $-$, and
$+$, belong to $\Omega_0$, $\Omega_1$, and $\Omega_2$, respectively.
Set $\Omega_2'=\Omega_2\setminus\{+\}$ and
$\Omega_1'=\Omega_1\setminus\{-\}$. If $*\in\Omega_2'$, then~$\Omega_2'$ also contains the opposite operation $*^{\op}$, defined
by $x*^{\op}y\coloneqq y*x$. Moreover, $\Omega_0=\{0\}$.

\item[(iii)] For every $*\in\Omega_2'$, the set $\mathbb{E}$ contains
the identity $x*(y+z)=x*y+x*z$.

\item[(iv)] For every $\omega\in\Omega_1'$ and
$*\in\Omega_2'$, the set $\mathbb{E}$ contains the identities
$\omega(x+y)=\omega(x)+\omega(y)$ and
$\omega(x*y)=\omega(x)*y$.
\end{itemize}

For every object $X$ of $\cC$ and all $x_1,x_2,x_3\in X$, we further
require the following axioms:
\begin{itemize}
\item[(v)] \textbf{Axiom 1}: for every $*\in\Omega_2'$,
$x_1+(x_2*x_3)=(x_2*x_3)+x_1$.

\item[(vi)] \textbf{Axiom 2}: for every ordered pair
$(*,\bar{*})\in\Omega_2'\times\Omega_2'$, there exists a word $W$ in the expressions
\[
x_1(x_2x_3),\,
x_1(x_3x_2),\,
(x_2x_3)x_1,\,
(x_3x_2)x_1,
\]
\[
x_2(x_1x_3),\,
x_2(x_3x_1),\,
(x_1x_3)x_2,\,
(x_3x_1)x_2,
\]
where each juxtaposition represents an operation in $\Omega_2'$. Denoting
this word by $W(x_1;x_2,x_3;*,\bar{*})$, the identity
\[
(x_1*x_2)\mathbin{\bar{*}}x_3
=
W(x_1;x_2,x_3;*,\bar{*})
\]
belongs to $\mathbb{E}$.
\end{itemize}

\begin{definition}\cite{Orzech}\label{definterest}
A category of groups with operations satisfying conditions~(i)--(vi)
is called an \emph{Orzech category of interest}, or simply a
\emph{category of interest}.
\end{definition}

Let $\mathbb{E}'$ be the subset of $\mathbb{E}$ containing the group identities and the identities in~(iii) and~(iv) (and, for categories of $\bF$-algebras, also the identities expressing that $+$ is commutative and that the operations $\omega_\lambda$ define a structure of $\bF$-vector space), and let $\cC'$ be the
corresponding category of groups with operations. We therefore obtain
a forgetful functor $U\colon\cC\to\cC'$. The category $\cC'$ is called
the \emph{general category of groups with operations} associated with
the Orzech category of interest $\cC$.

\begin{remark}
If $\cC$ is a category of abelian groups with operations, that is, if
the group operation $+$ is commutative, then Axiom~1 is automatically
satisfied.
\end{remark}

We now present some examples and counterexamples of Orzech categories
of interest.

\begin{example}
The categories $\Grp$ of groups and $\Ab$ of abelian groups are Orzech
categories of interest, with
$\Omega_1'=\Omega_2'=\emptyset$.
\end{example}

\begin{example}
The categories $\Rng$ of not necessarily unital rings and $\CRng$ of
commutative not necessarily unital rings are Orzech categories of
interest, with $\Omega_1'=\emptyset$ and
$\Omega_2'=\{*,*^{\op}\}$, where $*$ denotes ring multiplication.
Axiom~2 is given by the associativity of $*$.
\end{example}

\begin{example}
Let $\cC=\Alg$ be the category of \emph{all} non-associative algebras
over a field $\bF$. Its objects are $\bF$-vector spaces $X$ endowed
with a bilinear operation
\[
X\times X\to X\colon(x,y)\mapsto x*y,
\]
and its morphisms are the $\bF$-linear maps preserving this operation.
The category $\Alg$ is a category of abelian groups with operations,
with $\Omega_1'=\{\omega_\lambda\mid\lambda\in\bF\}$, where
$\omega_\lambda(x)=\lambda x$, and $\Omega_2'=\{*,*^{\op}\}$, but it is not an Orzech category of
interest because Axiom~2 fails.
\end{example}

\begin{example}\label{exalgint}
The category $\Assoc$ of associative algebras over a field $\bF$ is an
Orzech category of interest. Its underlying group operation is
commutative, and
\[
\Omega_1'=\{\omega_\lambda\mid\lambda\in\bF\},
\qquad
\Omega_2'=\left\{\ast,\ast^{\op}\right\},
\]
where $\ast$ denotes the associative multiplication. Axiom~2 is given
by associativity:
\[
(x_1\ast x_2)\ast x_3=x_1\ast(x_2\ast x_3).
\]
Similarly, the category $\Lie$ of Lie algebras over $\bF$ is an Orzech
category of interest, with
\[
\Omega_1'=\{\omega_\lambda\mid\lambda\in\bF\},
\qquad
\Omega_2'=
\left\{[-,-],[-,-]^{\op}\right\},
\]
where $[-,-]$ denotes the Lie bracket, and Axiom~2 is given by the
Jacobi identity. Moreover, $\Assoc'=\Lie'=\Alg$.
\end{example}

\begin{example}\label{esLeib}
A \emph{(right) Leibniz algebra}~\cite{loday} over a field $\bF$ is a
non-associative $\bF$-algebra $X$ endowed with a bilinear operation
$[-,-]$ satisfying the \emph{(right) Leibniz identity}
\[
[[x,y],z]=[[x,z],y]+[x,[y,z]]
\]
for all $x,y,z\in X$. Lie algebras are precisely the Leibniz algebras with an alternating bracket, that is, $[x,x]=0$ for every $x \in X$, since in this case the Leibniz identity is equivalent to the Jacobi identity.

The category $\Leib$ of right Leibniz algebras over $\bF$ is an Orzech
category of interest, with
\[
\Omega_1'=\{\omega_\lambda\mid\lambda\in\bF\},
\qquad
\Omega_2'=
\left\{[-,-],[-,-]^{\op}\right\}.
\]
Axiom~2 may be expressed through the rewriting identities
\begin{align*}
[x,[y,z]]
  &= [[x,y],z]-[[x,z],y],\\
[[y,z],x]
  &= [[y,x],z]-[y,[x,z]],
\end{align*}
the first of which is equivalent to the Leibniz identity, while the second one follows from it, since $[x,[y,z]+[z,y]]=0$ in every right Leibniz algebra. Moreover, $\Leib'=\Alg$.
\end{example}

\begin{example}
Recall that a \emph{Jordan algebra} over a field $\bF$ is a
commutative, not necessarily associative, $\bF$-algebra $(X,\cdot)$
satisfying the \emph{Jordan identity}
\[
(xy)(xx)=x(y(xx))
\]
for all $x,y\in X$.

The category $\Jord$ of Jordan algebras over $\bF$ is not an Orzech
category of interest because Axiom~2 is not satisfied.
\end{example}

Every category of groups with operations is semi-abelian, and hence
split extensions are naturally in bijection with internal actions. In
this context, however, it is more convenient to describe internal
actions in terms of \emph{derived actions}.

\begin{definition}\label{def derived actions}
Let $\cC$ be a category of groups with operations, and let
\begin{equation}\label{diag:SplC}
\begin{tikzcd}
X \arrow[r, "k"]
&
A \arrow[r, shift left, "\alpha"]
&
B \arrow[l, shift left, "\beta"]
\end{tikzcd}
\end{equation}
be a split extension in $\cC$. The collection of maps
\[
\{f_*\colon B\times X\to X\mid *\in\Omega_2\}
\]
defined by
\begin{equation*}
b\cdot x
=
k^{-1}\bigl(\beta(b)+k(x)-\beta(b)\bigr),
\qquad
b*x
=
k^{-1}\bigl(\beta(b)*k(x)\bigr),
\end{equation*}
where
\[
b\cdot x\coloneqq f_+(b,x),
\quad
b*x\coloneqq f_*(b,x)
\quad
\text{for every }*\in\Omega_2',
\]
is called the \emph{derived action} of $B$ on $X$ associated
with~\eqref{diag:SplC}. These expressions are well defined, since $\beta(b)+k(x)-\beta(b)$ and $\beta(b)*k(x)$ belong to
$\ker(\alpha)$, and the corestriction
$k\colon X\to\ker(\alpha)$ is an isomorphism.
\end{definition}

Given two objects $B$ and $X$ of $\cC$ and a collection of maps
$\{f_*\}_{*\in\Omega_2}$, denoted by
\[
b\cdot x\coloneqq f_+(b,x),
\qquad
b*x\coloneqq f_*(b,x)
\quad
\text{for every }*\in\Omega_2',
\]
we may define binary operations on the Cartesian product $B\times X$ by
\[
(b,x)+(b',x')=(b+b',x+b\cdot x')
\]
and
\[
(b,x)*(b',x')
=
(b*b',x*x'+b*x'+x*b'),
\]
where $x*b'\coloneqq b'*^{\op}x$. We also equip $B\times X$ with the constant and unary operations
\[
0=(0,0),\qquad
-(b,x)=(-b,(-b)\cdot(-x)),\qquad
\omega(b,x)=(\omega(b),\omega(x))
\]
for every $\omega\in\Omega_1'$.

\begin{definition}
The algebra on $B\times X$ defined by these operations is called the \emph{semidirect product} of $B$ and $X$ with respect to $\{f_*\}_{*\in\Omega_2}$, and it is denoted by $B\ltimes X$.
\end{definition}

The collection $\{f_*\}_{*\in\Omega_2}$ defines a derived action of
$B$ on $X$ if and only if $B\ltimes X$ is an object of $\cC$. In this
case, one obtains the split extension in $\cC$
\begin{equation}\label{splint}
\begin{tikzcd}
X \arrow[r, "\iota_2"]
&
B\ltimes X \arrow[r, shift left, "\pi_1"]
&
B \arrow[l, shift left, "\iota_1"],
\end{tikzcd}
\end{equation}
where $\iota_2(x)=(0,x)$, $\iota_1(b)=(b,0)$, and
$\pi_1(b,x)=b$. Moreover, $\{f_*\}_{*\in\Omega_2}$ is precisely the
derived action associated with~\eqref{splint}.

\begin{remark}\label{remabgrps}
When the group operation $+$ is commutative, one has $b\cdot x=x$ for
all~$b\in B$ and $x\in X$. Hence, a derived action of $B$ on $X$ is
determined by a collection of maps
$\{f_*\}_{*\in\Omega_2'}$.
\end{remark}

\begin{example}
Let $B$ and $X$ be objects of $\Alg$. A pair of maps
\[
f_*\colon B\times X\to X,
\qquad
f_{*^{\op}}\colon B\times X\to X
\]
defines a derived action of $B$ on $X$ if and only if the vector space
$B\oplus X\cong B\times X$, endowed with the binary operation
\[
(b,x)*(b',x')
=
(b*b',x*x'+b*x'+x*b'),
\]
is a non-associative algebra over $\bF$. This happens if and only if
$f_*$ and $f_{*^{\op}}$ are bilinear maps.
\end{example}

When $\cC$ is an Orzech category of interest, a collection
$\{f_*\}_{*\in\Omega_2}$ defines a derived action of $B$ on $X$ if and only if its maps satisfy the conditions described below.

\begin{proposition}\cite{Casas, Datuashvili}\label{conditionactions}
Let $\cC$ be an Orzech category of interest, and let $B$ and $X$ be
objects of $\cC$. Consider a collection of maps
$\{f_*\colon B\times X\to X\mid *\in\Omega_2\}$, denoted by
\[
b\cdot x\coloneqq f_+(b,x),
\qquad
b*x\coloneqq f_*(b,x)
\quad
\text{for every }*\in\Omega_2'.
\]
This collection defines a derived action of $B$ on $X$ in $\cC$ if
and only if the following conditions are satisfied:
\begin{itemize}
\item[(1)] $0\cdot x=x$;
\item[(2)] $b\cdot(x+x')=b\cdot x+b\cdot x'$;
\item[(3)] $(b+b')\cdot x=b\cdot(b'\cdot x)$;
\item[(4)] $b*(x+x')=b*x+b*x'$;
\item[(5)] $(b+b')*x=b*x+b'*x$;
\item[(6)] $b\cdot(x*x')=x*x'$;
\item[(7)] $b\cdot(x*b')=x*b'$;
\item[(8)] $(b*b')\cdot x=x$;
\item[(9)] $x*(b\cdot x')=x*x'$;
\item[(10)] $b*(b'\cdot x)=b*x$;
\item[(11)] $\omega(b\cdot x)=\omega(b)\cdot\omega(x)$;
\item[(12)] $\omega(x*b)=\omega(x)*b=x*\omega(b)$;
\item[(13)] $\alpha+\beta*\gamma=\beta*\gamma+\alpha$;
\item[(14)] for every ordered pair
$(*,\bar{*})\in\Omega_2'\times\Omega_2'$, the word $W$ fixed in Axiom~2 satisfies
\[
(x_1*x_2)\mathbin{\bar{*}}x_3
=
W(x_1;x_2,x_3;*,\bar{*});
\]
\item[(15)] every remaining identity in the fixed defining
presentation of $\cC$ holds in $B\ltimes X$, evaluated on
arbitrary elements of $B\times X$.
\end{itemize}
These conditions hold for all $\omega\in\Omega_1'$,
$*\in\Omega_2'$, $b,b'\in B$, $x,x'\in X$, and
$\alpha,\beta,\gamma\in B\cup X$ for which the expressions involved are defined, as well as for all $x_1,x_2,x_3\in B\cup X$ for which both sides of~(14) are defined.
\end{proposition}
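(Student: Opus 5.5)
The plan is to reduce the statement to the criterion recorded just before it: a collection $\{f_*\}_{*\in\Omega_2}$ defines a derived action of $B$ on $X$ exactly when the semidirect product $B\ltimes X$, with the operations defined on $B\times X$, is an object of $\cC$. So I would show that conditions (1)--(15) are equivalent to $B\ltimes X$ satisfying every identity of $\mathbb{E}$. For necessity, if the collection is a derived action, then $B\ltimes X$ lies in $\cC$, and we may identify $B$ and $X$ with the subobjects $\iota_1(B)$ and $\iota_2(X)$. Each listed condition is then a specific identity of $\mathbb{E}$, evaluated on elements of $B\cup X$ and read off componentwise.
\begin{itemize}
\item Conditions (1)--(3) are the group axioms together with the fact that conjugation by $\iota_1(b)$ is an automorphism of the normal subgroup $X$, given that $(b,0)+(0,x)-(b,0)=(0,b\cdot x)$.
\item Conditions (4), (5) and (12) come from the identities in (iii) and (iv) of the definition of a category of interest.
\item Conditions (6)--(10) and (13) come from Axiom~1, which says products are central; the element $b\cdot(x*x')$ arises as the conjugate of $x*x'$ by $b$.
\item Condition (14) is Axiom~2 restricted to mixed triples, and (15) holds tautologically.
\end{itemize}

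For sufficiency, I would verify each identity of $\mathbb{E}$ on arbitrary pairs $(b,x)$. The group axioms for the twisted sum $(b,x)+(b',x')=(b+b',x+b\cdot x')$ are the standard semidirect product of groups: associativity needs (2) and (3), the identity needs (1), and inverses need (2) and (3) again. The unary axioms in (iv) follow from (11) and (12). For distributivity $(b,x)*\bigl((b',x')+(b'',x'')\bigr)$, the second component involves $x*(b'\cdot x'')$ and $b*(b'\cdot x'')$. These reduce via (9) and (10) to the expected terms, after which (4), (5) and the commutation rules (13) rearrange the summands.

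For Axiom~1, the products $(b,x)*(b',x')$ must be central in the twisted group structure. Expanding both sides, the required equality follows from (6)--(8) and (13): (6) and (7) kill the conjugation action on product terms, and (8) says products in $B$ act trivially. For Axiom~2, I expand $\bigl((b_1,x_1)*(b_2,x_2)\bigr)\mathbin{\bar*}(b_3,x_3)$ using the biadditivity already established. The $B$-component holds in $B$ and the pure $X$-part holds in $X$. The mixed terms, where at least one entry lies in $B$ and at least one in $X$, are exactly the instances of (14). The remaining identities are covered by (15).

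The main obstacle I expect is the bookkeeping with non-commutative $+$. In the distributivity and Axiom~2 checks, conjugation twists appear inside products, and one must repeatedly use (9), (10) and (13) to strip them and reorder the summands. I would first treat the case of abelian $+$, where by \Cref{remabgrps} the action $b\cdot x$ is trivial and the argument is a direct multilinear expansion. I would then handle the general case by isolating the commutation lemmas (6)--(10) and (13) before expanding.
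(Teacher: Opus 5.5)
Your proposal is correct and is the standard argument behind the cited result. The paper gives no proof of its own; it only records, just before the statement, that $\{f_*\}_{*\in\Omega_2}$ is a derived action exactly when $B\ltimes X$ is an object of $\cC$, and the proposition is the unpacking of the identities of $\mathbb{E}$ on $B\ltimes X$ that you carry out. Three small bookkeeping points are missing: in the necessity direction you do not account for (11), which comes from additivity of $\omega$ applied to $(b,0)+(0,x)-(b,0)=(0,b\cdot x)$; conditions (9) and (10) need left distributivity as well as Axiom~1 (expand $(0,x)*\bigl((b,0)+(0,x')-(b,0)\bigr)$); and in the sufficiency check of distributivity, the twist $(b*b')\cdot(-)$ on the right-hand side is removed by (8).
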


\begin{corollary}\label{conditionsactionsabgroups}
Let $\cC$ be an Orzech category of interest whose objects are abelian
groups with operations, and let $B$ and $X$ be objects of $\cC$. A
collection of maps
$\{f_*\colon B\times X\to X\mid *\in\Omega_2'\}$, together with the
trivial group action $b\cdot x=x$, defines a derived
action of $B$ on $X$ in $\cC$ if and only if it satisfies
conditions~(4), (5), (12), (14), and~(15) of
\Cref{conditionactions}.
\end{corollary}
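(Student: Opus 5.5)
This follows from \Cref{conditionactions} by checking that, once $+$ is commutative and the group action is trivial, every condition not in the list is automatic. The plan is to substitute $b\cdot x=x$ for all $b\in B$, $x\in X$ (as in \Cref{remabgrps}) into the fifteen conditions. Each condition then either collapses to a tautology or to an identity already valid in $X$ or $B$, or it survives as a genuine requirement. The survivors will be exactly~(4), (5), (12), (14) and~(15).

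First, the conditions involving only the group action. Condition~(1) becomes $x=x$. Condition~(2) becomes $x+x'=x+x'$. Condition~(3) becomes $x=x$. Conditions~(6), (7) and~(8) become $x*x'=x*x'$, $x*b'=x*b'$ and $x=x$. Conditions~(9) and~(10) become $x*x'=x*x'$ and $b*x=b*x$.

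Next, condition~(11). It reads $\omega(x)=\omega(x)$, provided the unary operation acts coordinatewise on $B\ltimes X$, as in the definition of the semidirect product.

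Then condition~(13). It says that $\alpha+\beta*\gamma=\beta*\gamma+\alpha$ for $\alpha,\beta,\gamma\in B\cup X$, where the sum is computed in $B\ltimes X$. Since $(b,x)+(b',x')=(b+b',x+x')$ once the action is trivial, addition in $B\ltimes X$ is the addition of the abelian group $B\times X$. Hence this condition holds automatically.

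It remains to confirm that the trivial action really is the $f_+$-component of any split extension, so that no generality is lost. In a split extension, $k^{-1}\bigl(\beta(b)+k(x)-\beta(b)\bigr)=x$ because $A$ is abelian. Consequently a collection $\{f_*\}_{*\in\Omega_2'}$ together with the trivial action satisfies all of~(1)--(15) if and only if it satisfies~(4), (5), (12), (14) and~(15), and \Cref{conditionactions} gives the claim.

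The only step needing care is~(15). One must note that it concerns identities beyond $\mathbb{E}'$ and Axiom~2, such as commutativity of $+$ or the vector-space axioms for the $\omega_\lambda$, which are satisfied coordinatewise in $B\times X$. Even if these were not automatic, they are retained in the statement via~(15), so nothing needs proving there. I expect no real obstacle.
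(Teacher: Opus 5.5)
Your proposal is correct and is essentially the argument the paper leaves implicit (it gives no separate proof). Substituting the trivial action $b\cdot x=x$, which is forced by commutativity of $+$ in the middle object as in \Cref{remabgrps}, into \Cref{conditionactions} makes conditions~(1)--(3), (6)--(11) and~(13) automatic, and leaves exactly~(4), (5), (12), (14) and~(15); the proviso you attach to~(11) is unnecessary, since $\omega(b)$ also acts trivially, so the condition reads $\omega(x)=\omega(x)$ directly.
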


\begin{example}
Let $\cC=\Lie$. Since the group operation $+$ is commutative, a
derived action of $B$ on $X$ is given by a pair of bilinear maps
\[
f_{[-,-]}\colon B\times X\to X,
\qquad
f_{[-,-]^{\op}}\colon B\times X\to X
\]
such that the bilinear operation
\[
[(b,x),(b',x')]
=
([b,b'],[x,x']+[b,x']+[x,b'])
\]
defines a Lie algebra structure on the vector space
$B\oplus X$, where
$[b,x]\coloneqq f_{[-,-]}(b,x)$ and
$[x,b]\coloneqq f_{[-,-]^{\op}}(b,x)$. By
\Cref{conditionsactionsabgroups}, this happens if and only if
\begin{itemize}
\item[(1)] $[x,b]=-[b,x]$;
\item[(2)] $[b,[x,x']]=[[b,x],x']+[x,[b,x']]$;
\item[(3)] $[[b,b'],x]=[b,[b',x]]-[b',[b,x]]$;
\end{itemize}
for all $b,b'\in B$ and $x,x'\in X$.

For instance, let $B=\Der(X)$ be the Lie algebra of derivations of
$X$. The canonical action of $\Der(X)$ on $X$, defined by
\[
[d,x]=-[x,d]\coloneqq d(x),
\]
for all $d\in\Der(X)$ and $x\in X$, is a derived action in $\Lie$,
since the semidirect product
$\Hol(X)=\Der(X)\ltimes X$, the holomorph of $X$, is a
Lie algebra.

By contrast, the canonical action of $\Bim(X)$ on $X$ in $\Assoc$,
defined by
\[
(f\ast-,-\ast f)\ast x\coloneqq f\ast x
\]
and
\[
x\ast(f\ast-,-\ast f)\coloneqq x\ast f,
\]
for all $(f\ast-,-\ast f)\in\Bim(X)$ and $x\in X$, is not, in
general, a derived action. Indeed, a derived action of $B$ on $X$ in
$\Assoc$ is given by a pair of bilinear maps
\[
f_*\colon B\times X\to X,
\qquad
f_{*^{\op}}\colon B\times X\to X,
\]
with $b*x\coloneqq f_*(b,x)$ and
$x*b\coloneqq f_{*^{\op}}(b,x)$, satisfying
\begin{itemize}
\item[(1)] $(b*b')*x=b*(b'*x)$;
\item[(2)] $x*(b*b')=(x*b)*b'$;
\item[(3)] $(b*x)*b'=b*(x*b')$;
\item[(4)] $b*(x*x')=(b*x)*x'$;
\item[(5)] $(x*x')*b=x*(x'*b)$;
\item[(6)] $x*(b*x')=(x*b)*x'$;
\end{itemize}
for all $b,b'\in B$ and $x,x'\in X$.

As observed in \Cref{exnotacting}, given
$(f\ast-,-\ast f),(f'\ast-,-\ast f')\in\Bim(X)$, condition~(3) need
not be satisfied. For instance, let $X=\bF\{e_1,e_2\}$ be the
two-dimensional abelian algebra, and let
$(f\ast-,-\ast f)\in\Bim(X)
=\End(X)\times\End(X)^{\op}$ be defined by
\[
f\ast e_1=e_2,
\quad
f\ast e_2=0,
\qquad
e_1\ast f=e_1,
\quad
e_2\ast f=e_1.
\]
Then
\[
(f\ast e_1)\ast f
=
e_2\ast f
=
e_1
\neq
e_2
=
f\ast e_1
=
f\ast(e_1\ast f).
\]
Thus, condition~(3) is not satisfied, and the semidirect product
$\Bim(X)\ltimes X$ is not an associative algebra. This gives an
example of a derived action in $\Assoc'=\Alg$ that is not a derived
action in $\Assoc$.
\end{example}

\begin{remark}\label{reminterest}
In summary, derived actions can be defined in any category of groups
with operations and can be used to describe split extensions, or
equivalently internal actions.

Starting with a split extension in $\cC$
\begin{equation}\label{splrem}
\begin{tikzcd}
X \arrow[r, "k"]
&
A \arrow[r, shift left, "\alpha"]
&
B \arrow[l, shift left, "\beta"],
\end{tikzcd}
\end{equation}
we obtain the derived action
$\{f_*\}_{*\in\Omega_2}$ of $B$ on $X$ given by
\begin{equation*}
b\cdot x
=
k^{-1}\bigl(\beta(b)+k(x)-\beta(b)\bigr),
\qquad
b*x
=
k^{-1}\bigl(\beta(b)*k(x)\bigr),
\end{equation*}
for all $b\in B$, $x\in X$, and $*\in\Omega_2'$. Conversely, given a
collection of maps $\{f_*\}_{*\in\Omega_2}$, denoted by
\[
b\cdot x\coloneqq f_+(b,x),
\qquad
b*x\coloneqq f_*(b,x)
\quad
\text{for every }*\in\Omega_2',
\]
one can construct the semidirect product $B\ltimes X$ as above. The
collection $\{f_*\}_{*\in\Omega_2}$ is a derived action of $B$ on $X$
if and only if $B\ltimes X$ is an object of $\cC$. In this case, it is
the derived action associated with the split extension
\begin{equation}\label{splrem2}
\begin{tikzcd}
X \arrow[r, "\iota_2"]
&
B\ltimes X \arrow[r, shift left, "\pi_1"]
&
B \arrow[l, shift left, "\iota_1"],
\end{tikzcd}
\end{equation}
where $\iota_2(x)=(0,x)$, $\iota_1(b)=(b,0)$, and $\pi_1(b,x)=b$. Moreover, the diagram
\begin{equation*}
\begin{tikzcd}
X
    \arrow[r, "\iota_2"]
    \arrow[d, equal]
&
B\ltimes X
    \arrow[r, shift left, "\pi_1"]
    \arrow[d, "\theta"']
&
B
    \arrow[l, shift left, "\iota_1"]
    \arrow[d, equal]
\\
X \arrow[r, "k"]
&
A \arrow[r, shift left, "\alpha"]
&
B \arrow[l, shift left, "\beta"],
\end{tikzcd}
\end{equation*}
where $\theta(b,x)=k(x)+\beta(b)$, defines an isomorphism of split extensions between~\eqref{splrem2} and~\eqref{splrem}.

We therefore obtain a bijection
\[
\eta_B\colon\SplExt(B,X)\cong\DAct(B,X)
\]
between the set $\SplExt(B,X)$ of isomorphism classes of split
extensions of $B$ by $X$ in $\cC$ and the set $\DAct(B,X)$ of derived
actions of $B$ on $X$ in $\cC$.
\end{remark}

Denote by $\DAct(-,X)$ the functor $\cC^{\op}\to\Set$ assigning to
each object $B$ of $\cC$ the set $\DAct(B,X)$. Given a morphism
$g\colon B'\to B$, the map $\DAct(g,X)$ sends a derived action
$\{f_*\}_{*\in\Omega_2}$ of $B$ on $X$ to the derived action
$\{f'_*\}_{*\in\Omega_2}$ of $B'$ on $X$, where
$f'_*\coloneqq f_*\circ(g\times \id_X)$, that is,
\begin{equation}\label{eqf'}
f'_+(b',x)=f_+(g(b'),x),
\qquad
f'_*(b',x)=f_*(g(b'),x),
\end{equation}
for all $b'\in B'$, $x\in X$, and $*\in\Omega_2'$. We show that if
$\{f_*\}_{*\in\Omega_2}$ is the derived action associated with a split
extension
\begin{equation}\label{splBint}
\begin{tikzcd}
X \arrow[r, "k"]
&
A \arrow[r, shift left, "\alpha"]
&
B \arrow[l, shift left, "\beta"],
\end{tikzcd}
\end{equation}
then $\{f'_*\}_{*\in\Omega_2}$ is the derived action associated with
the split extension
\begin{equation}\label{splB'int}
\begin{tikzcd}
X \arrow[r, "k'"]
&
A' \arrow[r, shift left, "\alpha'"]
&
B' \arrow[l, shift left, "\beta'"]
\end{tikzcd}
\end{equation}
obtained by applying the change of base function
$g^*=\SplExt(g,X)$. In other words, we have the following result.

\begin{theorem}\label{intderivedact}
The bijections $\eta_B$ extend to a natural isomorphism of functors
\[
\eta\colon\SplExt(-,X)\cong\DAct(-,X).
\]
\end{theorem}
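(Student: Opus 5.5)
Since each $\eta_B$ is already a bijection by \Cref{reminterest}, it remains only to prove naturality. We must show that for every morphism $g\colon B'\to B$ in $\cC$ the square
\[
\DAct(g,X)\circ\eta_B=\eta_{B'}\circ g^*
\]
commutes. Concretely, let $\{f_*\}_{*\in\Omega_2}$ be the derived action associated with a split extension~\eqref{splBint}, and let~\eqref{splB'int} be its pullback along $g$. We will check that the derived action associated with~\eqref{splB'int} is the family $\{f'_*\}$ of~\eqref{eqf'}. Since $\eta_B$ is defined on isomorphism classes, we first note that isomorphic split extensions yield the same derived action. Indeed, an isomorphism $(\id_X,\theta,\id_B)$ satisfies $\theta k=k'$ and $\theta\beta=\beta'$, and $\theta$ preserves $+$, $-$ and every $*$. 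So we may compute with the explicit pullback representative.

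For that representative we take the pullback $A'=A\times_B B'=\{(a,b')\mid \alpha(a)=g(b')\}$, computed componentwise as in any variety, with
\[
k'(x)=(k(x),0),\qquad \beta'(b')=(\beta(g(b')),b'),\qquad \alpha'=p_2,
\]
as described in \Cref{SecIntAct}.

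The key step is the direct computation in $A'$, where all operations are componentwise. For the group operation,
\[
\beta'(b')+k'(x)-\beta'(b')=\bigl(\beta(g(b'))+k(x)-\beta(g(b')),\,b'+0-b'\bigr).
\]
Its second component is $0$, and its first component is $k\bigl(f_+(g(b'),x)\bigr)$. Hence the whole element equals $k'\bigl(f_+(g(b'),x)\bigr)$. For every $*\in\Omega_2'$,
\[
\beta'(b')*k'(x)=\bigl(\beta(g(b'))*k(x),\,b'*0\bigr).
\]
Here $b'*0=0$, by the distributivity identity~(iii) together with the group axioms. So this element equals $k'\bigl(f_*(g(b'),x)\bigr)$. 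Since $k'$ is injective, applying $k'^{-1}$ gives exactly~\eqref{eqf'}.

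The main obstacle is bookkeeping rather than anything conceptual. One point is to justify that $g^*$ can be computed by the concrete set-theoretic pullback. This holds because limits in a variety are created by the forgetful functor to $\Set$. The other point is to verify $b'*0=0$ from $\mathbb{E}$, which follows from $b'*0=b'*(0+0)=b'*0+b'*0$. With these two points settled, naturality follows, and $\eta$ is a natural isomorphism.
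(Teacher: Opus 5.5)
Your proof is correct and follows essentially the paper's argument: compute the derived action of the pulled-back split extension elementwise and check that it equals $f_*\circ(g\times\id_X)$. The only difference is that the paper applies the pullback projection $\theta=p_1$ (using $\theta k'=k$ and $\theta\beta'=\beta g$) and then the injectivity of $k$. This avoids both the concrete set-theoretic description of the pullback and the second-component checks ($b'+0-b'=0$ and $b'*0=0$) that you need before invoking the injectivity of $k'$.
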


\begin{proof}
Let $g\colon B'\to B$ be a morphism in $\cC$, and suppose that:
\begin{itemize}
\item[(1)] $g^*=\SplExt(g,X)$ sends the isomorphism class of
\eqref{splBint} to that of \eqref{splB'int};
\item[(2)] $\DAct(g,X)$ sends the derived action
$\{f_*\}_{*\in\Omega_2}$ associated with \eqref{splBint} to the
collection $\{f'_*\}_{*\in\Omega_2}$ defined
in~\eqref{eqf'}.
\end{itemize}
Let $\theta\colon A'\to A$ be the morphism obtained by pulling back
along $g$. Then
\[
\theta\circ\beta'=\beta\circ g \quad \text{ and } \quad \theta\circ k'=k.
\]
We must show that $\eta_B$ is natural in $B$, that is, that the
following diagram in $\Set$ commutes:
\[
\vcenter{\xymatrix{
		\SplExt(B,X)  \ar[d]_-{g^*} \ar[r]^-{\eta_{B}} & \DAct(B,X)  \ar[d]^-{\DAct(g,X)}\\
		\SplExt(B',X)  \ar[r]_-{\eta_{B'}} & \DAct(B',X).
}}
\]
The composition $\eta_{B'}\circ g^*$ sends~\eqref{splBint} to the derived action of $B'$ on $X$ associated with
\eqref{splB'int}. Denote this derived action by
$\{F'_*\}_{*\in\Omega_2}$, so that
\[
k'\bigl(F'_+(b',x)\bigr)
=
\beta'(b')+k'(x)-\beta'(b'),
\qquad
k'\bigl(F'_*(b',x)\bigr)
=
\beta'(b')*k'(x),
\]
for all $b'\in B'$, $x\in X$, and $*\in\Omega_2'$. Applying $\theta$
and using the pullback identities, we obtain
\begin{align*}
k\bigl(F'_+(b',x)\bigr)
&=\theta\bigl(k'(F'_+(b',x))\bigr)\\
&=\theta\bigl(\beta'(b')+k'(x)-\beta'(b')\bigr)\\
&=\beta(g(b'))+k(x)-\beta(g(b'))\\
&=k\bigl(f_+(g(b'),x)\bigr)\\
&=k\bigl(f'_+(b',x)\bigr)
\end{align*}
and
\begin{align*}
k\bigl(F'_*(b',x)\bigr)
&=\theta\bigl(k'(F'_*(b',x))\bigr)\\
&=\theta\bigl(\beta'(b')*k'(x)\bigr)\\
&=\beta(g(b'))*k(x)\\
&=k\bigl(f_*(g(b'),x)\bigr)\\
&=k\bigl(f'_*(b',x)\bigr).
\end{align*}
Since $k$ is injective, it follows that
\[
F'_+(b',x)=f_+(g(b'),x)=f'_+(b',x),
\qquad
F'_*(b',x)=f_*(g(b'),x)=f'_*(b',x)
\]
for every $*\in\Omega_2'$. Therefore,
$\eta_{B'}\circ g^*=\DAct(g,X)\circ\eta_B$, proving that
\[
\eta\colon\SplExt(-,X)\cong\DAct(-,X)
\]
is a natural isomorphism.
\end{proof}

Finally, since every category of groups with operations $\cC$ is
semi-abelian, by \Cref{actspltnatural} we have a natural isomorphism
\[
\Act(-,X)\cong\SplExt(-,X).
\]
Composing this isomorphism with the one of \Cref{intderivedact}, we obtain
\[
\Act(-,X)\cong\DAct(-,X).
\]
Thus, internal actions are in natural bijection with derived
actions.

\subsection{The universal strict general actor}\label{SecUSGA}

In~\cite{Casas}, the authors studied the representability of actions
in Orzech categories of interest. They proved that, for every object
$X$ of an Orzech category of interest $\cC$, there exists an object
$\USGA(X)$ of~$\cC'$, called the \emph{universal strict general actor}
of $X$, with the following properties:
\begin{itemize}
\item[(1)] The object $\USGA(X)$ is a \emph{general actor} of $X$.
This means that $\USGA(X)$ has a derived action on $X$ in $\cC'$, and
for every object $B$ of $\cC$ and every derived action
$\{f_*\}_{*\in\Omega_2}$ of $B$ on $X$ in $\cC$, there exists a unique
morphism $\varphi\colon B\to\USGA(X)$ in $\cC'$ such that the given
derived action is recovered from the action of $\varphi(B)$ on $X$:
\[
b\cdot x=\varphi(b)\cdot x,
\qquad
b*x=\varphi(b)*x,
\]
for all $b\in B$, $x\in X$, and $*\in\Omega_2'$.

\item[(2)] The object $\USGA(X)$ is a \emph{strict general actor}; that is, the derived action of $\USGA(X)$ on $X$ is
\emph{strict}\footnote{Recall from~\cite{Casas} that a derived action
of $B$ on $X$ is said to be \emph{strict} if, for $b,b'\in B$, the
conditions $b\cdot x=b'\cdot x$,
$\omega(b)\cdot x=\omega(b')\cdot x$, and $b*x=b'*x$, for all
$x\in X$, $\omega\in\Omega_1'$, and $*\in\Omega_2'$, imply that
$b=b'$.}.

\item[(3)] Let $\{B_j\}_{j\in J}$ be a family representing all objects
of $\cC$ equipped with a derived action on $X$ in $\cC$, and let
$\varphi_j\colon B_j\to\USGA(X)$ be the corresponding morphisms
provided by~(1). The elements of $\USGA(X)$ satisfy
\[
\bigl(\varphi_i(b)*\varphi_j(b')\bigr)\mathbin{\bar{*}}x
=
W\bigl(\varphi_i(b);\varphi_j(b'),x;*,\bar{*}\bigr),
\]
for all $b\in B_i$, $b'\in B_j$,  $x \in X$,
$*,\bar{*}\in\Omega_2'$, and $i,j\in J$ for which the expressions are defined.

\item[(4)] The object $\USGA(X)$ satisfies the following universal
property: for every strict general actor $\operatorname{SGA}(X)$
satisfying~(3), there exists a unique morphism
$\eta\colon\USGA(X)\to\operatorname{SGA}(X)$ in $\cC'$ such that
$\eta\circ\varphi_j=\psi_j$ for every $j\in J$, where
$\varphi_j\colon B_j\to\USGA(X)$ and
$\psi_j\colon B_j\to\operatorname{SGA}(X)$ are the morphisms
corresponding to the derived actions of the objects $B_j$.
\end{itemize}

The results of~\cite{Casas} also show that Orzech categories of
interest need not be action representable. More precisely, an object
$X$ of an Orzech category of interest $\cC$ admits an actor precisely
when its universal strict general actor induces a derived action in
$\cC$.

\begin{theorem}\cite{Casas}
Let $\cC$ be an Orzech category of interest, and let $X$ be an object
of $\cC$. Then $X$ admits an actor $[X]$ if and only if the semidirect
product $\USGA(X)\ltimes X$ is an object of $\cC$. In this case,
$[X]\cong\USGA(X)$.
\end{theorem}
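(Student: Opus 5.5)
Both implications will be proved through the natural isomorphism $\SplExt(-,X)\cong\DAct(-,X)$ of \Cref{intderivedact} and the characterisation of actors in \Cref{termobj}. The strategy is to show that, when $\USGA(X)\ltimes X$ lies in $\cC$, the derived action of $\USGA(X)$ on $X$ is a terminal object of $\SplExtX$. Conversely, when an actor exists, it will turn out to be a strict general actor satisfying the hypotheses of the universal property~(4), and I will compare it with $\USGA(X)$.

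($\Leftarrow$) Suppose $\USGA(X)\ltimes X$ is an object of $\cC$. Then the canonical derived action of $\USGA(X)$ on $X$ is a derived action in $\cC$, and $\USGA(X)$ is itself an object of $\cC$, as a quotient of $\USGA(X)\ltimes X$ via $\pi_1$. Take a split extension of $B$ by $X$ in $\cC$ and its derived action $\{f_*\}$. Property~(1) gives a unique $\varphi\colon B\to\USGA(X)$ in $\cC'$ with $b*x=\varphi(b)*x$ and $b\cdot x=\varphi(b)\cdot x$. Since $\cC$ is a full subcategory of $\cC'$ defined by identities, $\varphi$ is a morphism in $\cC$. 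The map $(b,x)\mapsto(\varphi(b),x)$ is then a morphism $B\ltimes X\to\USGA(X)\ltimes X$ of split extensions, which one checks directly from the formulas for the semidirect product operations. For uniqueness, any morphism of split extensions $(\id_X,\theta,b)$ forces $b$ to induce the same derived action, because derived actions are computed via $k$ and $\beta$ and pullbacks preserve them (\Cref{intderivedact}). Uniqueness in~(1), or equivalently strictness~(2), then gives $b=\varphi$. By \Cref{termobj}, $\USGA(X)$ is the actor.

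($\Rightarrow$) Suppose $X$ has an actor $[X]$, and let $\{g_*\}$ be the derived action of $[X]$ on $X$ corresponding to $\id_{[X]}$. The steps are as follows.

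1. The action $\{g_*\}$ is strict. If $t,t'\in[X]$ act identically, then the morphisms $\bF$-generated from them would be indistinguishable. More carefully, I plan to use faithfulness: the terminal split extension is faithful. Strictness should follow by considering the subobject generated by $t-t'$, which acts trivially, together with the uniqueness of maps into $[X]$. This is the step I expect to be delicate; one may instead argue that the kernel of the action is a normal subobject acting trivially, hence it must be zero by uniqueness of the two maps (zero and inclusion) into $[X]$.

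2. The action $\{g_*\}$ is a general actor and satisfies~(3). Every derived action of $B_j$ in $\cC$ factors uniquely through $[X]$, and since $[X]\ltimes X\in\cC$, identity~(14) holds in it, which gives~(3).

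3. Compare with $\USGA(X)$. By~(4) there is a unique $\eta\colon\USGA(X)\to[X]$ with $\eta\varphi_j=\psi_j$. Because $\USGA(X)$ is generated by the images $\varphi_j(B_j)$ (in the construction of~\cite{Casas}, the family includes $[X]$ itself), the map $[X]\to\USGA(X)$ given by~(1) is inverse to $\eta$. Strictness makes the composites identities, since they preserve the actions. Hence $\USGA(X)\cong[X]$, and $\USGA(X)\ltimes X\cong[X]\ltimes X\in\cC$.
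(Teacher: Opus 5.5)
The survey quotes this result from Casas, Datuashvili and Ladra without proof, so I judge your argument on its own. Your ($\Leftarrow$) direction is correct. The genuine gap is Step~1 of ($\Rightarrow$). Your route needs the action of $[X]$ on $X$ to be strict, because property~(4) only compares $\USGA(X)$ with \emph{strict} general actors, but you only sketch this step.

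The kernel idea is the right one in good cases, but its substance is that
$K=\{s\in[X]\mid s\cdot x=x,\ s*x=0\ \text{for all}\ x\in X,\ *\in\Omega_2'\}$
is a subobject of $[X]$, and you assert this without proof. Closure under $+$ and $-$ follows from conditions (1), (3) and (5) of \Cref{conditionactions}. Closure under the binary operations is where Axiom~2 enters. For $s\in K$ and $t\in[X]$, one has $(s*t)\cdot x=x$ by (8). Moreover $(s*t)\mathbin{\bar{*}}x=W(s;t,x;*,\bar{*})$ by (14) in $[X]\ltimes X$, and each of the eight terms vanishes because $s$ acts trivially.

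Once $K$ is a subobject, the inclusion $K\rightarrowtail[X]$ and the zero morphism both classify the trivial split extension $K\times X$. Hence $K=0$ by the uniqueness in \Cref{termobj}. Strictness then follows whenever the conditions on $\omega(b)\cdot x$ are vacuous, for example when $+$ is commutative (all varieties of algebras, using (12) for $\Omega_1'$) or when $\Omega_1'=\emptyset$.

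In general, however, strictness in the footnote's sense cannot be derived from the universal property of $[X]$ alone. Consider groups with one endomorphism $\omega$, which form a category of interest with $\Omega_2'=\emptyset$. There the actor of $X$ is $\{(\alpha_n)_{n\geq0}\in\Aut(X)^{\N}\mid\omega_X\alpha_n=\alpha_{n+1}\omega_X\}$, with $\omega$ acting as the shift and $(\alpha_n)\cdot x=\alpha_0(x)$. If $\omega_X=0$ and $\Aut(X)\neq1$, two sequences with the same $\alpha_0$ and $\alpha_1$ need not coincide.

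Step~1 is in fact unnecessary. Let $\varphi\colon[X]\to\USGA(X)$ be given by (1) for the canonical action, and let $\psi_j\colon B_j\to[X]$ be the classifying morphisms. Each $\varphi\circ\psi_j$ recovers the action of $B_j$, so $\varphi\circ\psi_j=\varphi_j$ by the uniqueness in (1). Since $\USGA(X)$ is generated by the images $\varphi_j(B_j)$, $\varphi$ is surjective. You use this generation fact in Step~3; it also follows from the uniqueness in (4), applied to the subobject generated by these images.

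Hence $(t,x)\mapsto(\varphi(t),x)$ maps $[X]\ltimes X$ onto $\USGA(X)\ltimes X$; it is a homomorphism by the same computation as in your ($\Leftarrow$). The image therefore satisfies every identity of $\cC$, because $[X]\ltimes X$ does. Your ($\Leftarrow$) argument then shows that $\USGA(X)$ is an actor of $X$, so $[X]\cong\USGA(X)$ by uniqueness of representing objects. Steps~1--3 are then no longer needed.
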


Later, A.~Montoli proved in~\cite[Theorem~4.7]{Montoli} that
every Orzech category of interest is action accessible. However, as
recalled in \Cref{SecWRA}, J.~R.~A.~Gray observed that such a category need not be weakly action representable. Nevertheless, the results of~\cite{Casas} imply the following.

\begin{proposition}\cite{CigoliManciniMetere}\label{USGA}
Let $\cC$ be an Orzech category of interest, and let $X$ be an object
of $\cC$. There exists a natural monomorphism of functors
\[
\mu\colon
\SplExt(-,X)
\rightarrowtail
\Hom_{\cC'}\bigl(U(-),\USGA(X)\bigr),
\]
where $\SplExt(-,X)=\SplExt_{\cC}(-,X)$ and
$U\colon\cC\to\cC'$ is the forgetful functor. If, moreover,
$\USGA(X)$ is an object of $\cC$, then $(\USGA(X),\mu)$ is a weak
representation of $\SplExt(-,X)$.
\end{proposition}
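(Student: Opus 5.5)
We build $\mu$ by composing the natural isomorphism $\eta\colon\SplExt(-,X)\cong\DAct(-,X)$ of \Cref{intderivedact} with a natural injection
\[
\nu_B\colon\DAct(B,X)\rightarrowtail\Hom_{\cC'}\bigl(U(B),\USGA(X)\bigr).
\]
Property~(1) of the universal strict general actor supplies this injection. Given a derived action $\{f_*\}_{*\in\Omega_2}$ of $B$ on $X$ in $\cC$, there is a unique morphism $\varphi\colon U(B)\to\USGA(X)$ in $\cC'$ such that $b\cdot x=\varphi(b)\cdot x$ and $b*x=\varphi(b)*x$ for all $b$, $x$ and $*\in\Omega_2'$. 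We set $\nu_B(\{f_*\})=\varphi$ and $\mu_B=\nu_B\circ\eta_B$.

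Injectivity of $\nu_B$ is immediate. The derived action is recovered from $\varphi$ by composing with the fixed derived action of $\USGA(X)$ on $X$. Hence two derived actions with the same $\varphi$ coincide.

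For naturality, let $g\colon B'\to B$ be a morphism in $\cC$. By \eqref{eqf'}, the pulled-back derived action is $f'_*=f_*\circ(g\times\id_X)$. The morphism $\varphi\circ U(g)$ lies in $\cC'$, and it satisfies
\[
\varphi(g(b'))*x=f_*(g(b'),x)=f'_*(b',x),
\]
and similarly for $+$. By the uniqueness clause in property~(1), it follows that $\nu_{B'}(\{f'_*\})=\varphi\circ U(g)$. Combined with the naturality of $\eta$, this shows that $\mu$ is a natural monomorphism.

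For the second claim, suppose $\USGA(X)$ is an object of $\cC$. Since $\cC$ is a full subcategory of $\cC'$ (a variety defined by more identities), $U$ is fully faithful. This gives a natural bijection $\Hom_{\cC'}(U(-),\USGA(X))\cong\Hom_{\cC}(-,\USGA(X))$, and composing $\mu$ with it gives the weak representation required by \Cref{defWRA}.

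The step needing most care is the uniqueness in property~(1). It holds because the action of $\USGA(X)$ is strict: $\varphi(b)$ is determined by its actions $\varphi(b)\cdot-$, $\varphi(b)*-$ and $\omega(\varphi(b))\cdot-$. Here one uses that $\omega(\varphi(b))=\varphi(\omega(b))$ for $\omega\in\Omega_1'$, since $\varphi$ is a morphism in $\cC'$. This is what makes both injectivity and naturality work.
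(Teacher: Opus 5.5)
Your proof is correct and is essentially the argument the paper has in mind. The paper gives no separate proof; it says the statement follows from the results on the universal strict general actor: compose the natural isomorphism $\SplExt(-,X)\cong\DAct(-,X)$ of \Cref{intderivedact} with the assignment from property~(1) of $\USGA(X)$, obtain injectivity from the recovery of the derived action and naturality from the uniqueness clause, and get the second claim because $\cC$ is a full subcategory of $\cC'$. Your closing remark that uniqueness follows from strictness is valid but redundant, since uniqueness is already part of property~(1) as recalled in the paper.
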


\begin{corollary}\cite{CigoliManciniMetere}\label{USGA2}
Let $\cC$ be an Orzech category of interest. If $\USGA(X)$ is an
object of $\cC$ for every object $X$ of $\cC$, then $\cC$ is weakly action
representable.
\end{corollary}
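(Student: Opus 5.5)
The corollary should follow directly from \Cref{USGA}, applied objectwise. By \Cref{defWRA}, I need to produce, for each object $X$ of $\cC$, an object $T(X)$ of $\cC$ together with a natural monomorphism $\SplExt_{\cC}(-,X)\rightarrowtail\Hom_{\cC}(-,T(X))$. I take $T(X)=\USGA(X)$, which lies in $\cC$ by hypothesis, and $\tau=\mu$ as in \Cref{USGA}.

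The one point to check is how $\Hom_{\cC'}(U(-),\USGA(X))$ relates to $\Hom_{\cC}(-,\USGA(X))$ once $\USGA(X)$ is an object of $\cC$. The category $\cC$ is the full subcategory of $\cC'$ cut out by the extra identities in $\mathbb{E}\setminus\mathbb{E}'$, with the same operations. So $U\colon\cC\to\cC'$ is the inclusion of a full subcategory, and for every object $B$ of $\cC$ we have an equality
\[
\Hom_{\cC'}(U(B),U(\USGA(X)))=\Hom_{\cC}(B,\USGA(X)),
\]
which is natural in $B$. Composing $\mu$ with this identification gives components $\mu_B\colon\SplExt(B,X)\to\Hom_{\cC}(B,\USGA(X))$. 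These components are still injective, and they remain natural in $B$, since precomposition with a morphism $f\colon B'\to B$ is the same operation in both hom-sets. Hence $(\USGA(X),\mu)$ is a weak representation of $\SplExt(-,X)$. This is exactly the second assertion of \Cref{USGA}, which we may assume.

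Since $X$ was arbitrary, every functor $\SplExt(-,X)$ admits a weak representation, and $\cC$ is weakly action representable by \Cref{defWRA}. There is no real obstacle here. The only subtlety is the fullness of $U$, which holds because $\cC$ is defined inside $\cC'$ purely by identities. All the substantive work sits in \Cref{USGA}: there, the injectivity of $\mu$ comes from the strictness and general-actor properties of $\USGA(X)$, recalled in~(1) and~(2) of \Cref{SecUSGA}.
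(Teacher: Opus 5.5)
Your proposal is correct and matches the paper, where the corollary is an immediate consequence of applying \Cref{USGA} to each object $X$ of $\cC$, taking $T(X)=\USGA(X)$ and $\tau=\mu$. Your check that $U\colon\cC\to\cC'$ is fully faithful, so that $\Hom_{\cC'}(U(-),\USGA(X))=\Hom_{\cC}(-,\USGA(X))$ naturally in the first variable, is exactly what underlies the second assertion of that proposition.
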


\begin{example}\label{USGA_Assoc}
Let $\cC=\Assoc$, with the presentation of Axiom~2 given in \Cref{exalgint}, and let $X$ be an associative algebra. By~\cite[Proposition~5.6]{Casas}, the universal strict general actor $\USGA(X)$ is an associative algebra. Hence, by \Cref{USGA2}, the category $\Assoc$ is 
weakly action representable~\cite{WRA}.
\end{example}

\begin{example}\label{USGA_Leib}
Let $\cC=\Leib$, and let $X$ be a Leibniz algebra. With the presentation of Axiom~2 adopted above, the universal strict general actor $\USGA(X)$ is a Leibniz algebra; see~\cite[Proposition~5.4]{Casas}. Hence, by \Cref{USGA2}, the category $\Leib$ is weakly action representable. We refer the reader to~\cite{CigoliManciniMetere} for an explicit description of a weak
representation and of the corresponding acting morphisms in $\Leib$.
\end{example}

\begin{remark}\label{presUSGA}
Condition~(3) in the construction of $\USGA$ is expressed in terms of
the identities appearing in Axiom~2. An Orzech category of interest
$\cC$ may admit several equivalent presentations, obtained by
choosing different but equivalent systems of identities for Axiom~2.
The construction of the universal strict general actor may therefore
depend on the chosen presentation: equivalent presentations of the
same category may give rise to non-isomorphic universal strict general
actors.
\end{remark}

\section{Varieties of non-associative algebras}\label{SecVar}

Our main algebraic setting is that of varieties of non-associative algebras over a fixed field $\bF$. By such a variety, we mean a class of non-associative $\bF$-algebras determined by a fixed set of polynomial identities.

\begin{definition}\cite{VdL-NAA}
A \emph{non-associative algebra} $X$ is a vector space over a field
$\bF$ endowed with a bilinear operation
$X\times X\to X\colon(x,y)\mapsto xy$, called the
\emph{multiplication} or \emph{product}.
\end{definition}

\begin{remark}
In some cases, such as Lie and Leibniz algebras, we refer to the
multiplication as the \emph{commutator} or \emph{bracket} and denote
it by $[x,y]$.
\end{remark}

\begin{remark}
We do not assume the existence of a unit or impose any other condition
on the multiplication besides bilinearity.
\end{remark}

Consider the free-algebra functor
\[
\bF[\bFM(-)]\colon\Set\to\Alg,
\]
which sends a set $S$ to the free non-associative $\bF$-algebra
generated by $S$. This functor is left adjoint to the forgetful functor
$\Alg\to\Set$ and factors as
\[
\Set
\xrightarrow{\;\bFM\;}
\Mag
\xrightarrow{\;\bF[-]\;}
\Alg.
\]
Here, $\Mag$ denotes the category of \emph{magmas}, $\bFM$ is the
free-magma functor, so that $\bFM(S)$ is the magma of non-associative
words in the elements of $S$, and $\bF[-]$ is the magma-algebra
functor.

\begin{definition}
Let $S$ be a set. An element $\varphi\in\bF[\bFM(S)]$ is called a
\emph{non-associative polynomial} in the variables $S$. A non-zero
polynomial is called a \emph{monomial} if it is a scalar multiple of
an element of $\bFM(S)$.
\end{definition}

For example, if $S=\{x,y,z,t\}$, then
\[
(xy)t+(zy)x,\qquad xx+yz,\qquad (xt)(yz)
\]
are non-associative polynomials, but only the last one is a monomial.

Let $\varphi$ be a non-zero monomial in the variables
$\{x_1,\dots,x_n\}$. Its \emph{type} is the $n$-tuple
\[
(k_1,\dots,k_n)\in\N^n,
\]
where $k_i$ is the number of occurrences of $x_i$ in $\varphi$, and
its \emph{degree} is
\[
k_1+\dots+k_n.
\]
The \emph{degree} of a non-zero polynomial is the maximum of the degrees of its monomials.

A polynomial is said to be:
\begin{itemize}
\item[(1)] \emph{homogeneous} if all its monomials have the same type;
\item[(2)] \emph{multilinear} if all its monomials have type $(1,\dots,1)$.
\end{itemize}

We observe that a multilinear polynomial in the variables $x_1,\dots,x_n$ has degree~$n$.

For instance, if $S=\{x,y,z\}$, then the monomial
\[
(xx)(yz)
\]
has type $(2,1,1)$ and degree $4$. The polynomial
\[
(xx)y+x(xy)
\]
is homogeneous of type $(2,1,0)$ and degree $3$, but it is not
multilinear. On the other hand,
\[
(xy)z-x(yz)
\]
is a multilinear polynomial, since both of its monomials have type
$(1,1,1)$. Notice also that
\[
(xx)y+xyz
\]
is not well defined without specifying the parentheses in the second monomial; for example, one must write either $(xy)z$ or $x(yz)$.
Finally,
\[
(xx)y+(yz)x
\]
is not homogeneous, since its monomials have types $(2,1,0)$ and
$(1,1,1)$, respectively.

\begin{remark}
Since our algebras need not have units, the polynomials considered
here have no constant terms.
\end{remark}

\begin{definition}\label{def identity variety}
A \emph{(polynomial) identity} of an algebra $X$ is a non-associative
polynomial $\varphi=\varphi(x_1,\dots,x_n)$ such that
\[
\varphi(a_1,\dots,a_n)=0
\]
for all $a_1,\dots,a_n\in X$. In this case, we say that $X$
satisfies the identity $\varphi$.
\end{definition}

\begin{definition}
Let $S$ be a set of variables and let
$I\subseteq\bF[\bFM(S)]$. The \emph{variety of non-associative
algebras determined by $I$} is the class of all non-associative
$\bF$-algebras satisfying every identity in $I$. More generally, we
say that a variety satisfies the identities in $I$ if every algebra
in the variety satisfies them.
\end{definition}

\begin{definition}
Let $\V$ be a variety of non-associative algebras. We say that:
\begin{itemize}
\item[(1)] $\V$ is \emph{homogeneous} if it is determined by a set of
homogeneous identities;
\item[(2)] $\V$ is \emph{operadic}~\cite{Cosmash} if it is determined by a set of multilinear identities.
\item[(3)] $\V$ is \emph{quadratic} if there exists a set of identities of degrees~$2$ and~$3$ generating all the identities of $\V$\footnote{Recall that an operadic, quadratic variety of algebras over a field can be viewed as a variety determined by a quadratic \emph{operad}; see~\cite{DGM}.}.
\end{itemize}
\end{definition}

Every variety of non-associative algebras $\V$ can be seen as a
full subcategory of $\Alg$, with the same morphisms, and there is a
forgetful functor $\V\to\Alg$. Moreover, every such variety is a semi-abelian category, since it is a category of groups with operations in the sense of \Cref{SecOrz}.

\begin{remark}
If the field $\bF$ is infinite, then every variety $\V$ of non-associative $\bF$-algebras is homogeneous~\cite{hmg}. Moreover,
if $\bchar(\bF)=0$, then every such variety is operadic. This follows
from the multilinearisation process; see
\cite[Corollary~3.7]{Osborn}.
\end{remark}

\begin{remark}
When considering a variety $\V$ of non-associative algebras, we assume
that the field $\bF$ is fixed and omit it from the notation.
\end{remark}

\begin{examples}\label{Examples varieties}{\ }
\begin{enumerate}
\item The variety $\AbAlg$ of \emph{abelian algebras} is determined
by the identity $xy=0$. Recall that $\AbAlg$ is an abelian category
and is therefore action representable.

\item $\Com$ and $\ACom$ denote, respectively, the
varieties of commutative and anti-commutative, not necessarily
associative, algebras. These varieties are determined, respectively,
by the identities $xy-yx=0$ and $xy+yx=0$.

\item The variety $\Assoc$ of associative algebras is determined by
the identity $x(yz)-(xy)z=0$.

\item The variety $\CAssoc$ of commutative associative algebras is
determined by the identities $xy-yx=0$ and $x(yz)-(xy)z=0$.

\item The variety $\Alt$ of \emph{alternative algebras} is determined
by the identities
\[
(yx)x-y(xx)=0
\quad\text{and}\quad
x(xy)-(xx)y=0.
\]
Every associative algebra is alternative, while the algebra of \emph{octonions} provides an example of an alternative algebra that is not associative.

\item The variety $\Lie$ of \emph{Lie algebras} is characterised by
$xx=0$ together with the \emph{Jacobi identity}
$x(yz)+y(zx)+z(xy)=0$. If $\bchar(\bF)\neq2$, the condition $xx=0$
is equivalent to anti-commutativity. Hence, in this case, the variety
$\Lie$ is operadic.

\item The variety $\JJord$ of \emph{Jacobi--Jordan
algebras}~\cite{Burde} is determined by commutativity and the Jacobi
identity. Jacobi--Jordan algebras, also known as \emph{mock-Lie
algebras}, are the commutative counterpart of Lie algebras.

\item The variety $\ACAAssoc$ of anti-commutative anti-associative
algebras is determined by the identities
$xy+yx=0$ and $x(yz)+(xy)z=0$.

\item The variety $\Leib$ of right Leibniz algebras is determined by
the right Leibniz identity
$(xy)z-(xz)y-x(yz)=0$.

\item The category $\BoolRng$ of not necessarily unital
\emph{Boolean rings} is the variety of commutative
associative $\bF_2$-algebras satisfying $xx=x$. Notice that this
variety is not homogeneous.

\item Given a variety $\V$, one can define its subvariety
$\Nil_k(\V)$ of $k$-\emph{nilpotent} algebras by adjoining to the
identities of $\V$ the identities
\[
x_1x_2\cdots x_{k+1}=0,
\]
for all possible choices of parentheses. For instance, if
$\V=\CAssoc$, every non-trivial multilinear identity of degree
$k+1$ is equivalent to $x_1\cdots x_{k+1}=0$. Hence, the only
proper operadic subvarieties of $\CAssoc$ are the varieties of
$k$-nilpotent commutative associative algebras.
\end{enumerate}
\end{examples}

\subsection{Derived actions in varieties of non-associative algebras}
\label{SecActVar}

We now want to describe derived actions in any variety of non-associative algebras.

\begin{definition}\label{Rem:SplitV}
Let $\V$ be a variety of non-associative algebras, and let
\begin{equation}\label{diag:SplV}
\begin{tikzcd}
X \arrow[r, "k"]
&
A \arrow[r, shift left, "\alpha"]
&
B \arrow[l, shift left, "\beta"]
\end{tikzcd}
\end{equation}
be a split extension in $\V$. The pair of bilinear maps
\[
l\colon B\times X\to X,
\qquad
r\colon X\times B\to X
\]
defined by
\[
b\ast x
=
k^{-1}\bigl(\beta(b)k(x)\bigr)
\quad\text{and}\quad
x\ast b
=
k^{-1}\bigl(k(x)\beta(b)\bigr),
\]
for all $b\in B$ and $x\in X$, where
$b\ast-=l(b,-)$ and $-\ast b=r(-,b)$, is called the
\emph{derived action} of $B$ on $X$ associated
with~\eqref{diag:SplV}.
\end{definition}

Let $B$ and $X$ be objects of $\V$. Given a pair of bilinear maps
\[
l\colon B\times X\to X,
\qquad
r\colon X\times B\to X,
\]
we can define a multiplication on the direct sum of vector spaces
$B\oplus X$ by
\begin{equation}\label{product on semi direct}
(b,x)\cdot(b',x')
=
(bb',xx'+b\ast x'+x\ast b'),
\end{equation}
where $b\ast x'\coloneqq l(b,x')$ and
$x\ast b'\coloneqq r(x,b')$. This construction gives rise to the
split extension in $\Alg$
\begin{equation}
\begin{tikzcd}
X \arrow[r, "\iota_2"]
&
(B\oplus X, \cdot) \arrow[r, shift left, "\pi_1"]
&
B \arrow[l, shift left, "\iota_1"],
\end{tikzcd}
\end{equation}
where $\iota_2(x)=(0,x)$, $\iota_1(b)=(b,0)$, and $\pi_1(b,x)=b$. This is a split extension in~$\V$ if and only if
$(B\oplus X,\cdot)$ is an object of $\V$, that is, if and only if it
satisfies the identities defining $\V$. We therefore obtain the
following result, analogous to~\cite[Part~I, Theorem~2.4]{Orzech}
and~\cite[Lemma~1.8]{Tim}.

\begin{lemma}\label{lemma derived action}
Let $\V$ be a variety of non-associative algebras. A pair of bilinear
maps $(l,r)$ defines a derived action of $B$ on $X$ if and only if
$(B\oplus X,\cdot)$ is an object of $\V$. In this case,
$(B\oplus X,\cdot)$ is called the \emph{semidirect product} of $B$
and $X$ with respect to the derived action $(l,r)$ and is denoted by
$B\ltimes X$.
\end{lemma}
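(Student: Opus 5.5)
We prove the two implications separately, interpreting ``$(l,r)$ defines a derived action'' as ``$(l,r)$ is the derived action associated, in the sense of \Cref{Rem:SplitV}, with some split extension in $\V$''. This is the same strategy as in \Cref{reminterest}, now in the setting of an arbitrary variety rather than an Orzech category of interest.

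\emph{If $(B\oplus X,\cdot)$ lies in $\V$.} Since $\V$ is a full subcategory of $\Alg$, the maps $\iota_2$, $\iota_1$ and $\pi_1$ are linear and preserve the products, so they are morphisms of $\V$. For instance,
\[
\iota_2(x)\cdot\iota_2(x')=(0,xx')=\iota_2(xx'),
\]
and $\iota_1$ and $\pi_1$ are checked in the same way. Moreover, $\pi_1\circ\iota_1=\id_B$, and $\iota_2$ is the kernel of $\pi_1$. Hence we have a split extension in $\V$, and we compute its derived action directly:
\[
\iota_1(b)\cdot\iota_2(x)=(b,0)\cdot(0,x)=(0,b\ast x),
\]
so $\iota_2^{-1}\bigl(\iota_1(b)\iota_2(x)\bigr)=l(b,x)$. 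Symmetrically, $\iota_2^{-1}\bigl(\iota_2(x)\iota_1(b)\bigr)=r(x,b)$. Thus $(l,r)$ is a derived action.

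\emph{If $(l,r)$ is a derived action.} Then $(l,r)$ is associated with some split extension~\eqref{diag:SplV} in $\V$. Define
\[
\theta\colon B\oplus X\to A,\qquad \theta(b,x)=\beta(b)+k(x).
\]
\begin{itemize}
\item $\theta$ is bijective. Injectivity: if $\beta(b)+k(x)=0$, applying $\alpha$ gives $b=0$, and then $x=0$ since $k$ is injective. Surjectivity: every $a\in A$ decomposes as $a=\beta\alpha(a)+\bigl(a-\beta\alpha(a)\bigr)$, and the second summand lies in $\ker\alpha=k(X)$.
\item $\theta$ is multiplicative. Expanding $\bigl(\beta(b)+k(x)\bigr)\bigl(\beta(b')+k(x')\bigr)$ by bilinearity gives
\[
\beta(bb')+k(xx')+\beta(b)k(x')+k(x)\beta(b').
\]
By the definition of $(l,r)$, the last two terms equal $k(b\ast x')$ and $k(x\ast b')$. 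Hence the product equals $\theta\bigl((b,x)\cdot(b',x')\bigr)$.
\end{itemize}
So $(B\oplus X,\cdot)\cong A$ as algebras. Since varieties are closed under isomorphism, $(B\oplus X,\cdot)$ belongs to $\V$.

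The only point needing care is the well-definedness in \Cref{Rem:SplitV}. We need $\beta(b)k(x)$ and $k(x)\beta(b)$ to lie in $\ker\alpha$, which holds because $\alpha\bigl(\beta(b)k(x)\bigr)=b\cdot 0=0$. We also need the corestriction of $k$ onto $\ker\alpha$ to be invertible, which holds because $k$ is a kernel. Everything else is routine, so there is no genuine obstacle. As a by-product, $\theta$ is an isomorphism of split extensions, since $\theta\iota_2=k$, $\theta\iota_1=\beta$ and $\alpha\theta=\pi_1$.
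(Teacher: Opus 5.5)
Your proposal is correct and follows essentially the paper's route. The paper justifies the lemma by noting that the construction of $(B\oplus X,\cdot)$ gives a split extension in $\Alg$, which lies in $\V$ exactly when $(B\oplus X,\cdot)$ satisfies the identities of $\V$. It then identifies an arbitrary split extension with its semidirect product via $\theta(b,x)=\beta(b)+k(x)$, in the remark right after the lemma. You carry out explicitly the verifications that the paper leaves implicit: that $\iota_1,\iota_2,\pi_1$ are morphisms, that $\iota_2$ is the kernel of $\pi_1$, and that $\theta$ is bijective and multiplicative.
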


\begin{remark}\label{rem_b*}
For every split extension~\eqref{diag:SplV} and its associated derived
action $(l,r)$, there is an isomorphism of split extensions
\begin{equation*}
\begin{tikzcd}
X
    \arrow[r, "\iota_2"]
    \arrow[d, equal]
&
B\ltimes X
    \arrow[r, shift left, "\pi_1"]
    \arrow[d, "\theta"']
&
B
    \arrow[l, shift left, "\iota_1"]
    \arrow[d, equal]
\\
X \arrow[r, "k"]
&
A \arrow[r, shift left, "\alpha"]
&
B \arrow[l, shift left, "\beta"],
\end{tikzcd}
\end{equation*}
where
$\theta\colon B\ltimes X\to A\colon
(b,x)\mapsto\beta(b)+k(x)$.
Thus, when writing $b\ast x$ (respectively, $x\ast b$), one may
identify it with the product $(b,0)\cdot(0,x)$ (respectively,
$(0,x)\cdot(b,0)$) in $B\ltimes X$.
\end{remark}

We recall the following results.

\begin{theorem}\cite{Tim,GMVdL,Coco_Thesis}\label{Theorem AC iff Orzech}
Let $\V$ be a homogeneous variety of non-associative algebras. The following conditions are equivalent:
\begin{tfae}
\item $\V$ is action accessible.
\item $\V$ is an Orzech category of interest.
\item There exist
$\lambda_1,\dots,\lambda_8,\mu_1,\dots,\mu_8\in\bF$ such that
\begin{align*}
x(yz)
={}&\lambda_1(xy)z+\lambda_2(yx)z+\lambda_3z(xy)+\lambda_4z(yx)\\
&+\lambda_5(xz)y+\lambda_6(zx)y+\lambda_7y(xz)+\lambda_8y(zx)
\intertext{and}
(yz)x
={}&\mu_1(xy)z+\mu_2(yx)z+\mu_3z(xy)+\mu_4z(yx)\\
&+\mu_5(xz)y+\mu_6(zx)y+\mu_7y(xz)+\mu_8y(zx)
\end{align*}
are identities of $\V$. These two identities are together called the
\emph{$\lambda/\mu$-rules}.
\end{tfae}
\end{theorem}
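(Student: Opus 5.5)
I would prove the cycle of implications (2)$\Rightarrow$(1)$\Rightarrow$(3)$\Rightarrow$(2). The implication (2)$\Rightarrow$(1) is Montoli's theorem~\cite[Theorem~4.7]{Montoli}, recalled in \Cref{SecUSGA}: every Orzech category of interest is action accessible. For (3)$\Rightarrow$(2), I view $\V$ as a category of abelian groups with operations, with $\Omega_1'=\{\omega_\lambda\mid\lambda\in\bF\}$ and $\Omega_2'=\{\cdot,\cdot^{\op}\}$. Conditions (i)--(iv) of \Cref{definterest} then hold by bilinearity, and Axiom~1 is automatic because $+$ is commutative. Axiom~2 asks, for each of the four pairs $(*,\bar*)$, for a rewriting of $(x_1*x_2)\mathbin{\bar*}x_3$ as a word in the eight listed expressions. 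Up to renaming variables these four products are $(x_1x_2)x_3$, $(x_2x_1)x_3$, $x_3(x_1x_2)$ and $x_3(x_2x_1)$, that is, products of the shapes $x(yz)$ and $(yz)x$ with the product placed in a given position. Substituting variables suitably in the $\lambda/\mu$-rules expresses each of these as a linear combination of products in which the distinguished variable sits in the inner factor. This is exactly the form required by Axiom~2, and a linear combination is a word in $+$, $-$ and the scalar operations. The only care needed here is the bookkeeping of which variable plays the role of $x_1$.

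The substantive direction is (1)$\Rightarrow$(3). My plan is to follow the strategy of~\cite{Tim,GMVdL}. Action accessibility in a semi-abelian variety implies a syntactic condition, namely the ``normalisation'' or centraliser condition of Bourn--Janelidze. In the terminology of~\cite{GMVdL} this is the statement that the variety is ``algebraically coherent''/has ``$\lambda/\mu$-rules''. I would make it concrete by testing accessibility on specific split extensions. Take $X$ to be the free $\V$-algebra on one generator $x$ and $B$ the free $\V$-algebra on $y,z$. Consider the split extension of $B$ by the ideal generated by $x$ inside the free algebra on $x,y,z$. Accessibility gives a morphism to a faithful split extension. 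Faithfulness forces an element of the acting object to be determined by its action on $X$. Comparing the element $yz$ acting via left and right multiplication with the actions of $y$ and $z$ then shows that $x(yz)$ and $(yz)x$ must be expressible through iterated actions of $y$ and $z$ on $x$. Those iterated actions are the eight monomials in which $x$ lies in an inner product.

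Homogeneity is used at this point: it lets me project onto the multidegree $(1,1,1)$ component in $x,y,z$. As a result, only the eight listed monomials with scalar coefficients survive, which gives the $\lambda_i$ and $\mu_i$.

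The main obstacle is precisely this extraction step. One must show that faithfulness yields a genuine polynomial identity, rather than a relation holding only modulo something, and that no higher-degree terms interfere. I would first try to run the argument as in~\cite[Theorem~2.7]{Tim}, via the characterisation of action accessibility through the existence of the relevant centralisers. If that proves delicate, I would cite the equivalence proved there together with the homogeneous refinement in~\cite{Coco_Thesis}.
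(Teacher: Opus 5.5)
The paper gives no proof of this theorem; it only cites \cite{Tim,GMVdL,Coco_Thesis}. So your final fallback to those references is what the paper does. Your implications (2)$\Rightarrow$(1), via Montoli, and (3)$\Rightarrow$(2) are fine: putting $x:=x_3$ in the $\lambda/\mu$-rules rewrites $x_3(x_1x_2)$, $x_3(x_2x_1)$, $(x_1x_2)x_3$ and $(x_2x_1)x_3$ as linear combinations of products with $x_3$ in the inner factor, which is the shape Axiom~2 asks for.

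\textbf{The gap in (1)$\Rightarrow$(3).} The argument you sketch for this direction does not work. There is a small slip: the kernel of $F_{\V}(x,y,z)\to F_{\V}(y,z)$ is the ideal $\langle x\rangle$, not the free algebra on $x$. More seriously, faithfulness of the target $E'$ only says that parallel morphisms of split extensions into $E'$ coincide.
\begin{itemize}
\item It does not by itself make elements of $B'$ determined by their action on $X$. To produce the needed parallel pairs you would need elements acting alike to be closed under products, which is essentially the rule you want to prove.
\item Even if it did, it would say nothing about how the action of $g(y)g(z)$ depends on the actions of $g(y)$ and $g(z)$.
\item In your chosen extension, $y$ and $z$ act freely on $\langle x\rangle$, so there is nothing for faithfulness to equalise.
\end{itemize}
What the $\lambda/\mu$-rules encode, after homogeneous projection, is the implication ``if $y$ and $z$ annihilate $\langle x\rangle$, then so does $yz$''. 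Your sketch never produces this.

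\textbf{How to obtain it from \Cref{defactacc}.}
\begin{itemize}
\item For an ideal $X$ of $A\in\V$, take the split extension $X\to A\ltimes X\rightleftarrows A$ given by multiplication. It is the kernel pair of $A\to A/X$, so it lies in $\V$. Choose a morphism $(\id_X,\theta,g)$ into a faithful split extension $X\xrightarrow{k'}A'\rightleftarrows B'$ with section $\beta'$.
\item Let $W\leq A$ be a subalgebra with $XW=WX=0$. Then $(w,x)\mapsto(w,x)$ and $(w,x)\mapsto(0,x)$ are morphisms of split extensions from the trivial extension $X\to W\times X\rightleftarrows W$ to $A\ltimes X$, lying over the inclusion and over the zero map respectively. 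Composing both with $(\id_X,\theta,g)$, faithfulness gives $g|_W=0$.
\item Conversely, if $g(a)=0$ then $k'(ax)=\theta\bigl((a,0)(0,x)\bigr)=\beta'(g(a))\,k'(x)=0$, and similarly $xa=0$. So the ideal $\ker g$ annihilates $X$; it is the centraliser of $X$ in the sense of \cite{act_accessible}.
\item Now take $A=F_{\V}(x,y,z)/I$. Here $I$ is generated by the multihomogeneous components of multidegree $(a,b,c)$ with $\max(a,b,c)\geq2$, together with $yu,uy,zu,uz$ for $u\in\langle x\rangle$.
\item Then $\bF\bar y$ and $\bF\bar z$ are subalgebras annihilating $\langle\bar x\rangle$, so $\bar y,\bar z\in\ker g$, hence $\bar y\bar z\in\ker g$. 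Therefore $\bar x(\bar y\bar z)=(\bar y\bar z)\bar x=0$, that is, $x(yz),(yz)x\in I$.
\item Since $\V$ is homogeneous, $F_{\V}(x,y,z)$ is multigraded and $I$ is a graded ideal. Its $(1,1,1)$-component is spanned by $(xy)z$, $(yx)z$, $z(xy)$, $z(yx)$, $(xz)y$, $(zx)y$, $y(xz)$, $y(zx)$.
\end{itemize}
This is exactly the pair of $\lambda/\mu$-rules.
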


\begin{remark}\label{USGA_var}
For varieties of non-associative algebras, the existence of the
$\lambda/\mu$-rules is precisely the formulation of Axiom~2 for an
Orzech category of interest. Indeed, in this setting, Axiom~2 requires
the products $x(yz)$ and $(yz)x$ to be expressible as linear
combinations of the eight products appearing on the corresponding
right-hand sides above. Thus, condition~(iii) of \Cref{Theorem AC iff Orzech} is equivalent to Axiom~2. These
expressions need not be unique; throughout the text, we fix a choice
of the $\lambda/\mu$-rules for each variety under consideration.
\end{remark}

\begin{remark}
If $\V$ is an action accessible variety of commutative or anti-commutative algebras, then the $\lambda/\mu$-rules can be chosen of the form
\[
x(yz)=\alpha(xy)z+\beta(xz)y,
\qquad
(yz)x=\varepsilon\alpha(xy)z+\varepsilon\beta(xz)y,
\]
for some $\alpha,\beta\in\bF$, where $\varepsilon=1$ in the commutative case and $\varepsilon=-1$ in the anti-commutative case. Indeed, by (anti-)commutativity, each of the eight products on the right-hand sides of the $\lambda/\mu$-rules is equal to $\pm(xy)z$ or to $\pm(xz)y$, and $(yz)x=\varepsilon\,x(yz)$.
\end{remark}

\begin{theorem}\cite{Tim}\label{AR implies Lie}
Let $\bF$ be an infinite field with $\bchar(\bF) \neq 2$, and let $\V$ be a non-trivial variety of non-associative $\bF$-algebras. The following conditions are equivalent:
\begin{tfae}
\item $\V$ is action representable.
\item $\V$ is either the variety $\Lie$ or the variety $\AbAlg$.
\end{tfae}
\end{theorem}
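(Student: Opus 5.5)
The implication from (ii) to (i) is already known: the variety $\AbAlg$ is abelian, and $\Lie$ is action representable with actor $\Der(X)$ by \Cref{exliealgebras}. So the real work is (i) $\Rightarrow$ (ii). I would first reduce to a quadratic presentation, then use the failure of representability for a suitable abelian kernel $X$ to force the identities of $\Lie$ or of $\AbAlg$.

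\emph{Step 1 (reduction).} An action representable variety is action accessible. Since $\bF$ is infinite, $\V$ is homogeneous, so \Cref{Theorem AC iff Orzech} applies and $\V$ satisfies some $\lambda/\mu$-rules. Next I would use the fact that $\SplExt(-,X)$ is representable, hence sends coproducts to products:
\[
\SplExt(B_1+B_2,X)\cong\SplExt(B_1,X)\times\SplExt(B_2,X),
\]
as in the argument of~\cite[Proposition~1.11]{Tim} recalled for $\Assoc$. Take $X$ abelian of dimension at least $2$ and $B_1=\bF\{b_1\}$, $B_2=\bF\{b_2\}$ one-dimensional abelian algebras. By \Cref{lemma derived action}, an action of $B_i$ is a pair of operators $(L_i,R_i)$ on $X$ such that $B_i\oplus X$ lies in $\V$. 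Combined actions must then exist for every choice of $(L_1,R_1)$ and $(L_2,R_2)$. Because the actions of $b_1$ and $b_2$ are independent, products such as $b_1\ast(b_2\ast x)$ can be made arbitrary. A $\lambda/\mu$-rule evaluated at $(x,y,z)=(b_1,b_2,x)$ then gives linear constraints on the coefficients, so I expect to show first that $\V$ is anti-commutative or commutative.

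\emph{Step 2 (commutative and anti-commutative cases).} In these cases the rules take the form $x(yz)=\alpha(xy)z+\beta(xz)y$. I would evaluate them in the semidirect product $B_1+B_2\ltimes X$, with $b_1b_2$ treated as a free new element acting by some operator $M$. The identity evaluated on $(b_1,b_2,x)$ expresses $M$ as $\alpha L_1L_2+\beta L_2L_1$, up to sign. Evaluating it also on $(b_2,b_1,x)$ and comparing gives relations such as $(\alpha+\beta)$, or $\alpha$ versus $\beta$, vanishing whenever $L_1L_2\neq L_2L_1$. Choosing non-commuting operators, for instance elementary matrices on a $2$-dimensional $X$, should force $\alpha=-\beta$ with $\alpha=\pm1$, which is the Jacobi identity, in the anti-commutative case. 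In the commutative case it should force $\alpha=\beta=0$, so $x(yz)=0$. The requirement $\bchar(\bF)\neq2$ enters here, to separate the commutative and anti-commutative cases and to divide by $2$.

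\emph{Step 3 (degenerate cases).} I then have to rule out everything except $\Lie$ and $\AbAlg$. If $\V$ is anti-commutative with Jacobi, it is contained in $\Lie$. A proper subvariety, for example a nilpotent one, would need to be excluded by a Gray-type obstruction or by the same coproduct argument applied to actors, so I would argue that any extra identity fails for some semidirect product forced to exist. In the $2$-step nilpotent cases, $x(yz)=0$, I would show that the actor of a suitable $X$ would have to contain non-nilpotent products, which forces $xy=0$. I expect the main obstacle to be Step 1: extracting (anti-)commutativity from the general $\lambda/\mu$-rules via the coproduct argument, which requires a careful choice of operators and bookkeeping over the sixteen coefficients.
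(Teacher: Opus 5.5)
The survey only quotes this theorem from \cite{Tim} and gives no proof, so I judge your outline on its own terms. It has a genuine gap exactly where you expect the difficulty. The implication (ii)$\Rightarrow$(i) and the passage from action representability to $\lambda/\mu$-rules are fine. Step~1, however, does not work as designed, for three reasons.

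\emph{The composites cannot be made arbitrary.} Admissible actions of $\bF b$ are cut out by the identities of $\V$; already the rules at $(b,b,x)$ impose quadratic relations on $(L,R)$. Without knowing $\V$, the only guaranteed supply is pairs with $L^2=LR=RL=R^2=0$. For these, $\bF b\ltimes X$ is $2$-nilpotent, so they are admissible precisely when they satisfy the degree-$2$ identities of $\V$.

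\emph{The constraints are not linear.} The rule at $(b_1,b_2,x)$ contains the unknown actions of $b_1b_2$ and $b_2b_1$. Constraints appear only after eliminating these through the rules at $(x,b_1,b_2)$ and $(x,b_2,b_1)$, i.e.\ through $m_X$, and they are quadratic in the sixteen constants.

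\emph{No degree-three bookkeeping can succeed.} Take the variety defined by the rules with $(\lambda_i)=(\tfrac12,-\tfrac12,0,0,0,0,1,0)$ and $(\mu_i)=(-\tfrac12,-\tfrac12,0,-1,-\tfrac12,-\tfrac12,0,1)$; both are consequences of the left Leibniz identity $x(yz)=(xy)z+y(xz)$. It contains all $2$-nilpotent algebras, so it has no identity of degree~$2$. Yet a direct check shows that, for arbitrary operators $L_1,R_1,L_2,R_2$, all twelve evaluations of the two rules at permutations of $(b_1,b_2,x)$ hold once $b_1b_2$ and $b_2b_1$ act through $m_X$. So the obstruction must come from higher degree. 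One route: first show that the actor $[X]$ of an abelian $X$ acts faithfully (the kernel of $[X]\to\End(X)^2$ is an ideal, and representability gives a left inverse of the quotient map). Then $[X]$ is a subspace of $\End(X)^2$ with multiplication $m_X$, containing every admissible pair, and you can test the identities of $\V$ on $[X]$ itself.

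Step~2 also contains an error that hides a case. In a non-abelian commutative variety, the square-zero operators $L_1=E_{12}$ and $L_2=E_{21}$ on $X=\bF^2$ are admissible. Comparing the rules at $(x,b_1,b_2)$ and $(x,b_2,b_1)$ gives $(\alpha-\beta)(L_1L_2-L_2L_1)=0$, so $\alpha=\beta$. The rule at $(b_1,b_2,x)$ then gives $L_1L_2=\alpha^2L_1L_2+(\alpha^2+\alpha)L_2L_1$, hence $\alpha=\beta=-1$. You therefore land in $\JJord$, not in $\Nil_2(\Com)$; with $\alpha=\beta=0$ one would need $L_1L_2=0$. Similarly, only $\alpha=1$ survives in the anti-commutative case, not $\alpha=\pm1$.

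Excluding the non-abelian subvarieties of $\JJord$ needs a degree-four argument, and this is where $\bchar(\bF)\neq2$ is really used: in characteristic~$2$, $\JJord$ is action representable (\Cref{ExamplesEX}(4)). In $B_1+B_2$ the Jacobi identity gives $2b_1(b_1b_2)=0$. The rule at $(x,b_1,b_1b_2)$, with $b_1b_2$ acting by $-(L_1L_2+L_2L_1)$, then forces $2L_1L_2L_1=0$, which fails for $E_{12},E_{21}$.

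Finally, Step~3 needs an actual argument. A Gray-type obstruction cannot work in general, since $\Nil_2(\Lie)$ is weakly action representable. An argument that does work: for $\V\subseteq\Lie$, $[X]$ embeds in $\End(X)$ with the commutator bracket and contains every square-zero operator, hence $\mathfrak{sl}(X)$, so $\mathfrak{sl}_n(\bF)\in\V$ for all $n$. Since $\mathfrak{gl}_n(\bF)$ embeds in $\mathfrak{sl}_{n+1}(\bF)$, and the $\mathfrak{gl}_n(\bF)$ generate $\Lie$ (the free Lie algebra embeds in the free associative algebra, which is residually finite-dimensional), one gets $\V=\Lie$.
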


\Cref{AR implies Lie} shows that, among varieties of non-associative
algebras over an infinite field $\bF$ with $\bchar(\bF) \neq 2$, $\Lie$ is the only non-abelian action
representable variety. Over an infinite field of characteristic~$2$, the non-abelian action representable varieties are $\Lie$ and $\JJord$, the latter coinciding with the variety of \emph{quasi-Lie algebras}, determined by anti-commutativity and the Jacobi identity~\cite[Theorems~1.12 and~6.1]{Tim}. This highlights the importance of studying weak action representability in this setting. The following result shows that every derived action induces a
unique derived action on the image of its acting morphism,
with respect to a fixed weak representation.

\begin{proposition}\cite{WRAAlg}
Let $\V$ be a weakly action representable variety of
non-associative algebras, and let $X$ be an object of $\V$.
Fix a weak representation $(T,\tau)$ of $\SplExt(-,X)$. Given an object $B$ of $\V$ and a derived action
\[
l\colon B\times X\to X,
\qquad
r\colon X\times B\to X,
\]
let $\varphi\colon B\to T$ be the corresponding acting morphism.
Then $\varphi(B)$ admits a unique derived action $(l',r')$ on $X$
such that
\[
l'(\varphi(b),x)=l(b,x)
\quad\text{and}\quad
r'(x,\varphi(b))=r(x,b)
\]
for every $b\in B$ and $x\in X$.
\end{proposition}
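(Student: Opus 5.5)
The plan is to use \Cref{lemma2WRAimpliesAA}, applied to the (regular epi, mono)-factorisation of $\varphi$. Set $B'=\varphi(B)$, which is a subalgebra of $T$ and hence an object of $\V$, since varieties are closed under subalgebras. Write $\varphi=\varphi'\circ f$, where $f\colon B\twoheadrightarrow B'$ is the corestriction and $\varphi'\colon B'\rightarrowtail T$ is the inclusion. In a variety, surjective homomorphisms are regular epimorphisms. Since $\varphi=\varphi'\circ f$ is an acting morphism, \Cref{lemma2WRAimpliesAA} shows that $\varphi'$ is an acting morphism. It therefore corresponds under $\tau_{B'}$ to a split extension of $B'$ by $X$, and via \Cref{lemma derived action} to a derived action $(l',r')$ of $B'=\varphi(B)$ on $X$.

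Next I will check that $(l',r')$ restricts to $(l,r)$ along $f$, that is, $l'(\varphi(b),x)=l(b,x)$ and $r'(x,\varphi(b))=r(x,b)$. By naturality of $\tau$, we have $\tau_B(f^*[E'])=\tau_{B'}([E'])\circ f=\varphi'\circ f=\varphi=\tau_B([E])$, where $E'$ is the split extension given by $\varphi'$ and $E$ is the original one. Injectivity of $\tau_B$ then gives $f^*[E']=[E]$. The derived action of a pulled-back split extension is obtained by precomposing with $f$; this is the same computation as in \Cref{intderivedact}, since varieties are categories of groups with operations. Hence the derived action of $E$, namely $(l,r)$, equals $\bigl(l'\circ(f\times\id_X),\,r'\circ(\id_X\times f)\bigr)$. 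This is exactly the required identity, because $f(b)=\varphi(b)$ in $B'$.

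For uniqueness, every element of $\varphi(B)$ has the form $\varphi(b)$. So any derived action $(l'',r'')$ satisfying the two displayed conditions has its values $l''(\varphi(b),x)=l(b,x)$ and $r''(x,\varphi(b))=r(x,b)$ completely prescribed. Hence $(l'',r'')=(l',r')$. This also shows in passing that the prescription is well defined: if $\varphi(b_1)=\varphi(b_2)$ then $l(b_1,x)=l(b_2,x)$. Existence is what makes this nontrivial.

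I expect the main obstacle to be justifying the use of \Cref{lemma2WRAimpliesAA}, which relies on the pullback statement \cite[Proposition~4.4]{WRA}. I will take that result as given, as the paper does. The remaining points are routine: that the image is in $\V$, that surjections are regular epis, and that the correspondence between split extensions and derived actions is compatible with pullback.
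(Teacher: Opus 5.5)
Your proof is correct and follows essentially the same route as the paper's. Both factor $\varphi$ through its image, apply \Cref{lemma2WRAimpliesAA} to see that the inclusion $\varphi(B)\rightarrowtail T$ is an acting morphism, and use naturality and injectivity of $\tau$ to identify the original extension with the pullback along the corestriction, with uniqueness following from surjectivity. The extra points you make explicit — that the image lies in $\V$, that surjections are regular epimorphisms, and that derived actions of pullbacks are obtained by precomposition — are exactly what the paper's short proof leaves implicit.
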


\begin{proof}
Write $\varphi=j\circ q$, where
$q\colon B\twoheadrightarrow\varphi(B)$ is the corestriction of
$\varphi$ to its image and
$j\colon\varphi(B)\rightarrowtail T$ is the inclusion.
Since $j\circ q$ is an acting morphism,
\Cref{lemma2WRAimpliesAA} implies that $j$ is also an acting
morphism. Let $(l',r')$ be its corresponding derived action
on $X$.

Naturality and injectivity of $\tau$ show that the original
split extension is isomorphic to the pullback of the extension
corresponding to $(l',r')$ along $q$. Hence
\[
\begin{aligned}
l'(\varphi(b),x)&=l'(q(b),x)=l(b,x),\\
r'(x,\varphi(b))&=r'(x,q(b))=r(x,b),
\end{aligned}
\]
for every $b\in B$ and $x\in X$. These equalities determine $(l',r')$ uniquely, since $q$ is surjective.
\end{proof}

\subsection{Partial algebras}\label{SecPart}

We conclude this section by introducing partial algebras,
using a modified formulation of the definition in~\cite{WRAAlg}.

There, the domain $\Omega\subseteq X\times X$ of a partial
multiplication was required to be a vector subspace.
In the constructions under consideration, products with zero are always defined, so $(x,0),(0,y)\in\Omega$ for all $x,y\in X$. The subspace requirement then gives
\[
(x,y)=(x,0)+(0,y)\in\Omega
\]
for every $x,y \in X$. Thus $\Omega=X\times X$, and the multiplication is necessarily total. To allow non-total multiplications, we instead take the domain to be a vector subspace of $X\otimes X$, as follows.

\begin{definition}
Let $X$ be an $\bF$-vector space. A \emph{bilinear partial operation}
on $X$ is a pair $(D,\mu)$ consisting of a vector subspace
\[
D\leq X\otimes X
\]
and a linear map
\[
\mu\colon D\to X.
\]
Given $x,y\in X$, we say that the product $xy$ is
\emph{defined} if $x\otimes y\in D$. In this case, we set
\[
xy\coloneqq\mu(x\otimes y).
\]
\end{definition}

\begin{definition}
A \emph{partial algebra} over $\bF$ is a triple $(X,D,\mu)$, where
$X$ is an $\bF$-vector space and $(D,\mu)$ is a bilinear partial
operation on $X$.

The partial algebra $(X,D,\mu)$ is said to be \emph{total} if $D=X\otimes X$. In this case, $\mu$ is the linearisation of an ordinary bilinear
multiplication on $X$.
\end{definition}

\begin{definition}
Let $(X,D,\mu)$ and $(X',D',\mu')$ be partial algebras over $\bF$.
A \emph{homomorphism of partial algebras} is a linear map
$f\colon X\to X'$ such that
\[
(f\otimes f)(D)\subseteq D' \quad \text{ and } \quad
f\circ\mu=\mu'\circ\left.(f\otimes f)\right|_D.
\]
\end{definition}

We denote by $\PAlg$ the category whose objects are partial algebras
over $\bF$ and whose morphisms are homomorphisms of partial algebras.

\begin{remark}\label{remzeroPAlg}
The category $\PAlg$ is pointed. Its zero object is the zero vector space endowed with its unique bilinear partial operation, which is total, and we denote it by~$0$, as usual. Indeed, let $(X,D,\mu)$ be a partial algebra. Since $D$ is a vector subspace of $X\otimes X$, it contains the zero tensor, so that the unique linear map from the zero vector space to $X$ is a homomorphism of partial algebras. The unique linear map from $X$ to the zero vector space is clearly a homomorphism of partial algebras as well. Consequently, the zero morphism between two partial algebras is the zero linear map. Moreover, for every $x\in X$, the tensors $x\otimes0$ and $0\otimes x$ are zero and hence belong to~$D$, so that the products $x0$ and $0x$ are always defined and equal to~$0$.
\end{remark}

\begin{definition}
Let $(X,D,\mu)$ be a partial algebra, and let
\[
t\in\bFM(\{x_1,\ldots,x_n\})
\]
be a non-associative word. For every $\mathbf a=(a_1,\ldots,a_n)\in X^n$, we define the \emph{partial evaluation} of $t$ at $\mathbf a$
recursively as follows.

If $t=x_i$, then $t$ is defined at $\mathbf a$, and $t(\mathbf a)=a_i$. 

If $t=t_1t_2$, assume that $t_1$ and $t_2$ are defined at $\mathbf a$, with respective values
\[
t_1(\mathbf a)=b
\qquad\text{and}\qquad
t_2(\mathbf a)=c.
\]
Then $t$ is defined at $\mathbf a$ if and only if $b\otimes c\in D$, and in this case
\[
t(\mathbf a)
=
\mu(b\otimes c).
\]
If one of these conditions is not satisfied, then $t$ is not defined at $\mathbf a$.

Let $p\in\bF[\bFM(\{x_1,\ldots,x_n\})]$ have the unique expansion
\[
p=\sum_{j=1}^{r}\lambda_jt_j,
\]
where the $t_j$ are pairwise distinct non-associative words and
$\lambda_j\in\bF\setminus\{0\}$.
We say that $p$ is \emph{defined} at $\mathbf a$ if every $t_j$ is
defined at $\mathbf a$. In this case, we set
\[
p(\mathbf a)=\sum_{j=1}^{r}\lambda_jt_j(\mathbf a).
\]
We allow $r=0$; thus the zero polynomial is defined at every $\mathbf a\in X^n$ and has value zero.

We say that $(X,D,\mu)$ \emph{satisfies the polynomial identity}
\[
p(x_1,\ldots,x_n)=0
\]
if $p(\mathbf a)=0$ for every $\mathbf a\in X^n$ at which $p$ is defined.
\end{definition}

\begin{example}
Let $x,y,z\in X$. The monomial $x(yz)$ is defined at
$(x,y,z)$ if and only if
\[
y\otimes z\in D \quad \text{ and } \quad x\otimes\mu(y\otimes z)\in D.
\]
When these conditions hold, its value is
\[
x(yz)
=
\mu\bigl(
x\otimes\mu(y\otimes z)
\bigr).
\]

Similarly, the monomial $(xy)z$ is defined if and only if
\[
x\otimes y\in D \quad \text{ and } \quad \mu(x\otimes y)\otimes z\in D,
\]
and in this case
\[
(xy)z
=
\mu\bigl(
\mu(x\otimes y)\otimes z
\bigr).
\]
\end{example}

\begin{example}
A partial algebra $(X,D,\mu)$ is \emph{associative} if
\[
x(yz)=(xy)z
\]
for every $x,y,z\in X$ for which both sides are defined.
\end{example}

\begin{remark}
Let $X$ be a non-associative algebra, and let
\[
\widetilde m_X\colon X\otimes X\to X
\]
be the linearisation of its multiplication. Then
\[
\widetilde U(X)
=
(X,X\otimes X,\widetilde m_X)
\]
is a total algebra. This construction defines a canonical fully faithful functor $\widetilde U\colon\V\to\PAlg$ for every variety of non-associative algebras $\V$. In particular, $\widetilde U$ sends the zero algebra to the zero object of~$\PAlg$.
\end{remark}

\begin{lemma}\label{lem:morphism-from-total}
Let $B$ be a non-associative algebra, let $(X,D,\mu)$ be a partial
algebra, and let $f\colon B\to X$ be a linear map. Then
\[
f\colon\widetilde U(B)\to(X,D,\mu)
\]
is a homomorphism of partial algebras if and only if, for every
$b,b'\in B$,
\[
f(b)\otimes f(b')\in D
\]
and
\[
f(bb')
=
\mu\bigl(f(b)\otimes f(b')\bigr).
\]
\end{lemma}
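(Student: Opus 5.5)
The proof is a direct unwinding of the definition of a homomorphism of partial algebras, applied with source $\widetilde U(B)=(B,B\otimes B,\widetilde m_B)$. By definition, $f$ is a homomorphism $\widetilde U(B)\to(X,D,\mu)$ if and only if two conditions hold. The first is the inclusion $(f\otimes f)(B\otimes B)\subseteq D$. The second is the equality of linear maps $f\circ\widetilde m_B=\mu\circ(f\otimes f)$ on $B\otimes B$. I will show that each condition is equivalent to the corresponding condition in the statement.

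\emph{The domain condition.} Suppose first that $(f\otimes f)(B\otimes B)\subseteq D$. Then, in particular, $f(b)\otimes f(b')=(f\otimes f)(b\otimes b')\in D$ for all $b,b'\in B$. Conversely, $B\otimes B$ is spanned by the pure tensors $b\otimes b'$, and $f\otimes f$ is linear. Hence $(f\otimes f)(B\otimes B)$ is spanned by the elements $f(b)\otimes f(b')$. Since $D$ is a vector subspace, containing these generators forces $(f\otimes f)(B\otimes B)\subseteq D$.

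\emph{The multiplicativity condition.} Assume the domain condition, so that $\mu\circ(f\otimes f)$ is a well-defined linear map on $B\otimes B$. Both $f\circ\widetilde m_B$ and $\mu\circ(f\otimes f)$ are then linear maps $B\otimes B\to X$. Two linear maps on $B\otimes B$ coincide if and only if they agree on pure tensors. On a pure tensor $b\otimes b'$ they take the values $f(bb')$ and $\mu(f(b)\otimes f(b'))$, since $\widetilde m_B(b\otimes b')=bb'$. So the second condition is equivalent to $f(bb')=\mu(f(b)\otimes f(b'))$ for all $b,b'\in B$.

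There is no real obstacle. The only point requiring care is the order of the argument: the multiplicativity condition must be checked only after the inclusion into $D$ is established, because otherwise $\mu\circ(f\otimes f)$ is not defined on all of $B\otimes B$.
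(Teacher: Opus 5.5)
Your proposal is correct and follows the paper's proof: the paper likewise reduces both the inclusion $(f\otimes f)(B\otimes B)\subseteq D$ and the equality $f\circ\widetilde m_B=\mu\circ(f\otimes f)$ to pure tensors, using that these span $B\otimes B$. Your remark that the multiplicativity condition only makes sense once the domain condition holds is a point the paper leaves implicit.
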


\begin{proof}
The pure tensors $b\otimes b'$ span $B\otimes B$. Therefore,
\[
(f\otimes f)(B\otimes B)\subseteq D
\]
if and only if $f(b)\otimes f(b')\in D$ for all $b,b'\in B$.
Likewise, the equality
\[
f\circ\widetilde m_B
=
\mu\circ(f\otimes f)
\]
between linear maps $B\otimes B\to X$ may be checked on pure tensors.
\end{proof}

\begin{lemma}\label{lem:partial-from-bilinear-map}
Let $W$ be a vector subspace of a vector space $Z$, and let
\[
m\colon W\times W\to Z
\]
be a bilinear map. Denote its linearisation by $\widetilde m\colon W\otimes W\to Z$. Then
\[
D_m\coloneqq\widetilde m^{-1}(W)
\]
is a vector subspace of $W\otimes W$, and
\[
\mu_m\coloneqq\left.\widetilde m\right|_{D_m}
\colon D_m\to W
\]
defines a partial algebra $(W,D_m,\mu_m)$. For $u,v\in W$, the product
$u\cdot v$ is defined precisely when $m(u,v)\in W$, and in this case
\[
u\cdot v=m(u,v).
\]
\end{lemma}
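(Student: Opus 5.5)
The claim is elementary, and I will check its three parts in order: that $D_m$ is a subspace, that $\mu_m$ is a well-defined linear map into $W$, and the description of when a product is defined.

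First, $\widetilde m\colon W\otimes W\to Z$ is linear by the universal property of the tensor product applied to the bilinear map $m$. Since $W$ is a vector subspace of $Z$, its preimage $D_m=\widetilde m^{-1}(W)$ under a linear map is a vector subspace of $W\otimes W$: it contains $0$ because $\widetilde m(0)=0\in W$, and it is closed under linear combinations because $W$ is. Next, the restriction $\left.\widetilde m\right|_{D_m}$ is linear, and by construction it takes values in $W$. So it corestricts to a linear map $\mu_m\colon D_m\to W$, and $(W,D_m,\mu_m)$ is a partial algebra by the definition of a bilinear partial operation.

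Finally, fix $u,v\in W$. By definition, the product $u\cdot v$ is defined exactly when $u\otimes v\in D_m$. Since $\widetilde m(u\otimes v)=m(u,v)$, this holds exactly when $m(u,v)\in W$. In that case
\[
u\cdot v=\mu_m(u\otimes v)=\widetilde m(u\otimes v)=m(u,v).
\]

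There is no real obstacle here. The one point worth stating explicitly is that the domain is cut out in $W\otimes W$ rather than in $W\times W$. This is what makes $D_m$ a genuine subspace even though the set of pairs $(u,v)$ with $m(u,v)\in W$ need not be one. It is exactly the point motivating the tensor-domain formulation adopted in this survey.
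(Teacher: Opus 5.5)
Your proof is correct and follows the same route as the paper: $D_m$ is a subspace because it is the preimage of the subspace $W$ under the linear map $\widetilde m$, and the remaining claims follow from the definitions together with $\widetilde m(u\otimes v)=m(u,v)$. You simply write out in more detail the steps the paper says follow immediately from the definitions.
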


\begin{proof}
Since $W\leq Z$ and $\widetilde m$ is linear, the inverse image
$D_m=\widetilde m^{-1}(W)$ is a vector subspace of $W\otimes W$.
The remaining assertions follow immediately from the definitions.
\end{proof}

\section{Representability of actions of non-associative algebras}
\label{SecRepVar}

Throughout this section, we assume that $\V$ is an action accessible,
operadic variety of non-associative algebras. We fix a defining family of multilinear identities
\[
\Phi_{k,i}(x_1,\ldots,x_k)=0,
\qquad 1\leq i\leq n_k,
\]
where $k=\deg \Phi_{k,i} \geq 2$ (an identity of degree~$1$ only occurs in the trivial variety, which we exclude). For each $k$, the integer $n_k$ may be chosen finite, since the space of multilinear polynomials in $k$ fixed variables is finite-dimensional. We also fix
$\lambda_1,\ldots,\lambda_8,\mu_1,\ldots,\mu_8\in\bF$ determining
a choice of the $\lambda/\mu$-rules.

Our aim is to study the representability of actions in $\V$. More precisely, we shall see that, for every object $X$ of $\V$, there
exists a vector space $\E(X)$ satisfying
\[
\Inn(X)\leq\E(X)\leq\End(X)^2,
\]
where
\[
\Inn(X)=\{(L_x,R_x)\mid x\in X\}
\]
is the vector space of left and right multiplications of $X$, given by $L_x(y)=xy$ and $R_x(y)=yx$ for all $y\in X$. Moreover, once the $\lambda/\mu$-rules have been fixed, $\E(X)$ carries a partial algebra structure
\[
(\E(X),D_X,\mu_X)
\]
in such a way that, for any split extension of $B$ by $X$ in $\V$, one can naturally associate a homomorphism of partial algebras
\[
\widetilde U(B)\to(\E(X),D_X,\mu_X).
\]

\begin{proposition}\label{prop_semidirect}\cite{WRAAlg}
Let $X$ and $B$ be objects of $\V$. Given a pair of bilinear maps
\[
l\colon B\times X\to X,
\qquad
r\colon X\times B\to X,
\]
we construct $(B\oplus X,\cdot)$ as in
\eqref{product on semi direct}. Then $(B\oplus X,\cdot)$ is an object
of $\V$ if and only if
\[
\Phi_{k,i}(\alpha_1,\ldots,\alpha_k)=0
\]
for every $i=1,\ldots,n_k$ and every mixed choice of
$\alpha_1,\ldots,\alpha_k$, where each $\alpha_j$ is either of the
form $(b,0)$, with $b\in B$, or of the form $(0,x)$, with $x\in X$,
and at least one element of each form occurs.
\end{proposition}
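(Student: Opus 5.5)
The plan is to exploit multilinearity of the defining identities, together with the decomposition $B\oplus X$ of the underlying vector space. Since $\V$ is operadic and we have fixed the defining family $\Phi_{k,i}$, \Cref{lemma derived action} and the definition of a variety show that $(B\oplus X,\cdot)$ lies in $\V$ if and only if $\Phi_{k,i}(\gamma_1,\ldots,\gamma_k)=0$ for all $\gamma_1,\ldots,\gamma_k\in B\oplus X$ and all $k,i$. The necessity direction is then immediate: the mixed tuples are particular choices of the $\gamma_j$.

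For sufficiency, write each $\gamma_j=(b_j,0)+(0,x_j)$. Since $\Phi_{k,i}$ is multilinear and the multiplication \eqref{product on semi direct} is bilinear, the evaluation $\Phi_{k,i}(\gamma_1,\ldots,\gamma_k)$ expands as the sum of the $2^k$ terms $\Phi_{k,i}(\alpha_1,\ldots,\alpha_k)$. In each term, $\alpha_j\in\{(b_j,0),(0,x_j)\}$. The mixed terms vanish by hypothesis, so it remains to treat the two pure terms.

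\begin{itemize}
\item If every $\alpha_j$ is of the form $(b_j,0)$, observe that $\iota_1\colon B\to B\oplus X$ is multiplicative, since $(b,0)\cdot(b',0)=(bb',0)$. Hence $\Phi_{k,i}((b_1,0),\ldots,(b_k,0))=(\Phi_{k,i}(b_1,\ldots,b_k),0)=0$, because $B\in\V$.
\item If every $\alpha_j$ is of the form $(0,x_j)$, the same argument applies using $(0,x)\cdot(0,x')=(0,xx')$ and $X\in\V$.
\end{itemize}

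The only point requiring care is the justification that a multilinear polynomial evaluated on a sum splits into the $2^k$ partial evaluations. I would argue this monomial by monomial: each word in $\bFM(\{x_1,\ldots,x_k\})$ of type $(1,\ldots,1)$ is evaluated through iterated bilinear products in which each variable occurs exactly once, so by induction on the word it is additive in each argument separately. Multilinearity is essential here. For a non-multilinear identity such as $xx=0$, cross terms of the form $(b,0)\cdot(0,x)+(0,x)\cdot(b,0)$ would arise that are not individually mixed evaluations of the same identity. This explains why the operadic hypothesis is assumed. I do not expect any genuine obstacle beyond this bookkeeping.
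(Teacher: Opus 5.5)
Your proof is correct: expanding each multilinear $\Phi_{k,i}$ over the decomposition $\gamma_j=(b_j,0)+(0,x_j)$ and handling the two pure terms with the multiplicative inclusions $\iota_1$ and $\iota_2$ (using $B,X\in\V$) is the standard argument for this result. The survey states the proposition without proof and refers to \cite{WRAAlg}, and your multilinearity argument is the natural one for it; the appeal to \Cref{lemma derived action} is harmless but not needed, since membership of $(B\oplus X,\cdot)$ in $\V$ is simply the vanishing of all $\Phi_{k,i}$.
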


Using the notation of \Cref{rem_b*}, we obtain the following result.

\begin{corollary}\cite{WRAAlg}
Suppose that every identity of $\V$ is a consequence of the
$\lambda/\mu$-rules. Then $(B\oplus X,\cdot)$ is an object of $\V$ if
and only if the following identities hold:
\begin{enumerate}
\item[(1)]
$b\ast(xx')
 =\lambda_1(b\ast x)x'+\cdots+\lambda_8x(x'\ast b)$;

\item[(2)]
$(xx')\ast b
 =\mu_1(b\ast x)x'+\cdots+\mu_8x(x'\ast b)$;

\item[(3)]
$x(x'\ast b)
 =\lambda_1(xx')\ast b+\cdots+\lambda_8x'(b\ast x)$;

\item[(4)]
$(x'\ast b)x
 =\mu_1(xx')\ast b+\cdots+\mu_8x'(b\ast x)$;

\item[(5)]
$x(b\ast x')
 =\lambda_1(x\ast b)x'+\cdots+\lambda_8b\ast(x'x)$;

\item[(6)]
$(b\ast x')x
 =\mu_1(x\ast b)x'+\cdots+\mu_8b\ast(x'x)$;

\item[(7)]
$x\ast(bb')
 =\lambda_1(x\ast b)\ast b'
  +\cdots+\lambda_8b\ast(b'\ast x)$;

\item[(8)]
$(bb')\ast x
 =\mu_1(x\ast b)\ast b'
  +\cdots+\mu_8b\ast(b'\ast x)$;

\item[(9)]
$b\ast(b'\ast x)
 =\lambda_1(bb')\ast x+\cdots+\lambda_8b'\ast(x\ast b)$;

\item[(10)]
$(b'\ast x)\ast b
 =\mu_1(bb')\ast x+\cdots+\mu_8b'\ast(x\ast b)$;

\item[(11)]
$b\ast(x\ast b')
 =\lambda_1(b\ast x)\ast b'
  +\cdots+\lambda_8x\ast(b'b)$;

\item[(12)]
$(x\ast b')\ast b
=\mu_1(b\ast x)\ast b'
 +\cdots+\mu_8 x\ast(b'b)$;
 \end{enumerate}
for every $b,b'\in B$ and $x,x'\in X$.
\end{corollary}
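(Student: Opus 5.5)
The plan is to derive the Corollary from \Cref{prop_semidirect}. By hypothesis, every identity of $\V$ follows from the $\lambda/\mu$-rules. So I take the defining family $\{\Phi_{k,i}\}$ to consist only of the two rules, read as multilinear polynomials
\[
\Phi_1(x,y,z)=x(yz)-\lambda_1(xy)z-\cdots-\lambda_8y(zx),\qquad
\Phi_2(x,y,z)=(yz)x-\mu_1(xy)z-\cdots-\mu_8y(zx).
\]
Membership in $\V$ does not depend on which defining family is chosen, so \Cref{prop_semidirect} applies to this family. It says that $(B\oplus X,\cdot)$ lies in $\V$ if and only if $\Phi_1$ and $\Phi_2$ vanish on every mixed triple. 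A mixed triple is one in which each entry is $(b,0)$ or $(0,x)$, with at least one entry of each kind.

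Next I enumerate the mixed triples. There are $2^3-2=6$ patterns for $(x,y,z)$: three with a single $B$-entry and three with a single $X$-entry. Each pattern, paired with $\Phi_1$ or $\Phi_2$, gives $6\times2=12$ conditions. I then match each pair to one of the items (1)--(12), reading products through \Cref{rem_b*}. For example:
\begin{itemize}
\item[] the pattern $x=(b,0)$, $y=(0,x)$, $z=(0,x')$ in $\Phi_1$ gives $b\ast(xx')$ on the left, with right-hand terms $(b\ast x)x'$, \dots, $x(x'\ast b)$; this is (1), and $\Phi_2$ on the same pattern gives (2);
\item[] the patterns with $B$ in position $z$ or $y$ give (3)--(6);
\item[] the patterns with two $B$-entries give (7)--(12).
\end{itemize}
In each case I use the product formula \eqref{product on semi direct}. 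A product of two elements of $B\oplus X$ with components in $B$ and $X$ respectively is computed by $l$ or $r$. A product of two $X$-elements is the product in $X$. A product of two $B$-elements is $bb'\in B$, and that $B$-element is then acted on by $l$ or $r$.

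Every monomial of degree $3$ on a mixed triple lands in the $X$-component. So the $X$-component of $\Phi(\alpha_1,\alpha_2,\alpha_3)$ is exactly the difference of the two sides of the corresponding item. The $B$-component of every monomial is $0$, because each monomial involves at least one $X$-entry and products with an $X$-entry contribute nothing to $B$. Hence vanishing of $\Phi$ on the triple is equivalent to the displayed identity.

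I expect the main obstacle to be bookkeeping rather than mathematics. The variable names in (1)--(12) are permuted relative to $(x,y,z)$ in the rules; for instance, in (3) the roles are $x\mapsto x$, $y\mapsto x'$, $z\mapsto b$. So for each of the twelve cases I have to record the substitution and check that the eight monomials map, in order, to the eight terms listed. I would tabulate this once for $\Phi_1$ and deduce the $\Phi_2$ cases from it, since both rules share the same eight right-hand monomials.
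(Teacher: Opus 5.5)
Your proposal is correct and is essentially the paper's argument. The paper gives no further proof beyond specialising \Cref{prop_semidirect} to the defining family formed by the two $\lambda/\mu$-rules and reading the $6\times 2$ mixed evaluations through \Cref{rem_b*}, and your substitutions (e.g.\ $x\mapsto x$, $y\mapsto x'$, $z\mapsto b$ for (3)--(4)) reproduce items (1)--(12) exactly.
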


\begin{definition}\label{E(X)}\cite{WRAAlg}
Let $X$ be an object of $\V$. We define $\E(X)$ as the subspace of
all pairs
\[
f=(f\ast-,-\ast f)\in\End(X)^2
\]
satisfying
\[
\Phi_{k,i}(\alpha_1,\ldots,\alpha_k)=0,
\qquad
i=1,\ldots,n_k,
\]
for every $k$, every $j\in\{1,\ldots,k\}$ and all $\alpha_t\in X$ with $t\neq j$, where $\alpha_j=f$. Here, we write
\[
fx\coloneqq f\ast x,
\qquad
xf\coloneqq x\ast f.
\]
We consider the bilinear map
\[
m_X\colon
\E(X)\times\E(X)\to\End(X)^2
\]
defined by
\[
m_X\left(
(f\ast-,-\ast f),(g\ast-,-\ast g)
\right)
=
(h\ast-,-\ast h),
\]
where
\begin{align*}
x\ast h
={}&\lambda_1(x\ast f)\ast g
  +\lambda_2(f\ast x)\ast g
  +\lambda_3g\ast(x\ast f)
  +\lambda_4g\ast(f\ast x)\\
 &+\lambda_5(x\ast g)\ast f
  +\lambda_6(g\ast x)\ast f
  +\lambda_7f\ast(x\ast g)
  +\lambda_8f\ast(g\ast x)
\intertext{and}
h\ast x
={}&\mu_1(x\ast f)\ast g
  +\mu_2(f\ast x)\ast g
  +\mu_3g\ast(x\ast f)
  +\mu_4g\ast(f\ast x)\\
 &+\mu_5(x\ast g)\ast f
  +\mu_6(g\ast x)\ast f
  +\mu_7f\ast(x\ast g)
  +\mu_8f\ast(g\ast x).
\end{align*}
\end{definition}

\begin{remark}
The subspace $\E(X)$ does not depend on the chosen defining family of identities: since the multilinear identities of $\V$ are linear combinations of the~$\Phi_{k,i}$ evaluated at monomials and multiplied by further monomials, every multilinear identity of $\V$ vanishes when exactly one of its variables is replaced by $f\in\E(X)$ and the others by elements of~$X$.
\end{remark}

\begin{remark}
Suppose that every identity of $\V$ is a consequence of the
$\lambda/\mu$-rules. Then $\E(X)$ is the subspace of all pairs
$(f\ast-,-\ast f)\in\End(X)^2$ satisfying
\begin{enumerate}
\item[(1)]
$f\ast(xx')
 =\lambda_1(f\ast x)x'+\cdots+\lambda_8x(x'\ast f)$;

\item[(2)]
$(xx')\ast f
 =\mu_1(f\ast x)x'+\cdots+\mu_8x(x'\ast f)$;

\item[(3)]
$x(x'\ast f)
 =\lambda_1(xx')\ast f+\cdots+\lambda_8x'(f\ast x)$;

\item[(4)]
$(x'\ast f)x
 =\mu_1(xx')\ast f+\cdots+\mu_8x'(f\ast x)$;

\item[(5)]
$x(f\ast x')
 =\lambda_1(x\ast f)x'+\cdots+\lambda_8f\ast(x'x)$;

\item[(6)]
$(f\ast x')x
 =\mu_1(x\ast f)x'+\cdots+\mu_8f\ast(x'x)$;
\end{enumerate}
for any $x,x'\in X$.
\end{remark}

\begin{remark}
The choice of the $\lambda/\mu$-rules does not affect the underlying
vector space of $\E(X)$, but it plays an essential role in the
definition of the bilinear map $m_X$ and hence in the partial
algebra structure on $\E(X)$.
\end{remark}

\begin{remark}
In general, the bilinear map
\[
m_X\colon\E(X)\times\E(X)\to\End(X)^2
\]
does not corestrict to $\E(X)$, i.e., it does not define a bilinear operation on $\E(X)$. In other words, there may exist
$f,g\in\E(X)$ such that
\[
m_X(f,g)\notin\E(X).
\]
Even when $m_X$ corestricts to a bilinear operation on
$\E(X)$, the resulting total algebra may fail to satisfy
some of the defining identities of $\V$.
\end{remark}

\begin{example}\label{Example associative}
Let $\V=\Assoc$, and let $X$ be an associative algebra. Then $\E(X)\cong\Bim(X)$ as vector spaces, where $\Bim(X)$ is the associative algebra of bimultipliers of $X$ described in~\Cref{exassoc}. If we consider the standard choice of the $\lambda/\mu$-rules given by
\[
\lambda_1=\mu_8=1
\]
and
\[
\lambda_i=\mu_j=0
\]
for every $i\in\{2,\ldots,8\}$ and $j\in\{1,\ldots,7\}$, then the above isomorphism is also an isomorphism of associative algebras.
\end{example}

\begin{example}\label{exLeib}
Let $\V=\Leib$, and let $X$ be a Leibniz algebra. Then
\[
\E(X)
=
\left\{
(f\ast-,-\ast f)\in\End(X)^2
\,\middle|\,
\begin{aligned}
f\ast[x,y]
  &= [f\ast x,y]-[f\ast y,x],\\
[x,y]\ast f
  &= [x\ast f,y]+[x,y\ast f],\\
[x,f\ast y]&+[x,y\ast f]
  =0
\end{aligned}
\, \forall x,y\in X
\right\}.
\]
With the choice of the $\lambda/\mu$-rules as in \Cref{esLeib}, we have that $\E(X)$ is isomorphic to the Leibniz algebra
\[
\Bider(X)
=
\left\{
(d,D)\in\End(X)^2
\,\middle|\,
\begin{aligned}
d([x,y])
  &= [d(x),y]+[x,d(y)],\\
D([x,y])
  &= [D(x),y]-[D(y),x],\\
[x,d(y)]
  &= [x,D(y)]
\end{aligned}
\quad \forall x,y\in X
\right\}
\]
of \emph{biderivations} of $X$~\cite{loday}, whose bracket is given by
\[
\left[(d,D),(d',D')\right]
=
\bigl(
d\circ d'-d'\circ d,\,
D\circ d'-d'\circ D
\bigr).
\]
The isomorphism $\Bider(X) \to \E(X)$ is given by the linear map
\[
(d,D) \mapsto (D,-d).
\]
On the other hand, if we choose the $\lambda/\mu$-rules as
\begin{align*}
[x,[y,z]]
  &= [[x,y],z]-[[x,z],y],\\
[[y,z],x]
  &= [[y,x],z]+[y,[z,x]],
\end{align*}
the underlying vector space of $\E(X)$, identified with $\Bider(X)$ via the above isomorphism, is endowed with the non-associative operation defined in~\cite[Definition~5.2]{Casas}:
\[
\bigl\{(d,D),(d',D')\bigr\}
=
\bigl(
d\circ d'-d'\circ d,\,
D\circ D'-d'\circ D
\bigr).
\]
This operation does not, in general, satisfy the right Leibniz
identity and therefore does not define a Leibniz algebra structure on
$\E(X)$. For instance, if $X=\bF$ is the one-dimensional abelian algebra, then $\Bider(X) \cong \bF^2$, and $u=(0,1)$ satisfies $\{u,u\}=u$, so that $\{u,\{u,u\}\}=u\neq0$, whereas $[x,[y,y]]=0$ for all elements $x,y$ of a right Leibniz algebra.
\end{example}

\begin{example}\label{exnil2}
Let $\V=\Nil_2(\Alg)$ be the variety of $2$-nilpotent algebras
(see~\cite{LaRosaMancini1,LaRosaMancini2}), determined
by the identities
\[
x(yz)=(yz)x=0.
\]
Then
\[
\E(X)
=
\left\{
(f\ast-,-\ast f)\in\End(X)^2
\,\middle|\,
\begin{aligned}
f\ast(xy)&=(xy)\ast f=0,\\
(f\ast x)y&=x(y\ast f)=0,\\
x(f \ast y)&=(x \ast f)y=0,
\end{aligned}
\quad \forall x,y\in X
\right\},
\]
and, choosing $\lambda_i=\mu_i=0$ for $i=1,\ldots,8$, the bilinear map $m_X$ is given by
\[
m_X\left((f\ast-,-\ast f),(g\ast-,-\ast g)\right)=(0,0).
\]
Thus, $\E(X)$ is an abelian algebra.
\end{example}

\begin{example}\label{Example Nilk}
Let $\V=\Nil_k(\Assoc)$, with $k\geq3$. Then
\[
\E(X)
=
\left\{
(f\ast-,-\ast f)\in\Bim(X)
\;\middle|\;
\begin{aligned}
&f\ast(x_1\cdots x_k)=0=(x_1\cdots x_k)\ast f\\
&(x_1\cdots x_j)\bigl(f\ast(x_{j+1}\cdots x_k)\bigr)=0
\end{aligned}\right\}
\]
where the conditions hold for every $ x_1,\ldots,x_k \in X$ and $1 \leq j < k$.
With the same choice of the $\lambda/\mu$-rules as in
\Cref{Example associative}, the bilinear map $m_X$ becomes
\[
m_X\left(
(f\ast-,-\ast f),(g\ast-,-\ast g)
\right)
=
\bigl(f\ast(g\ast-),(-\ast f)\ast g\bigr).
\]
This makes $\E(X)$ an associative algebra, although it need not be
$k$-nilpotent. For instance, let $X=\bF$ be the one-dimensional
abelian algebra. Then
\[
\E(X)
=
\End(X)\times\End(X)^{\op}
\cong\bF^2,
\]
which is not nilpotent. Indeed, every linear endomorphism of $X$ is of
the form
\[
\varphi_\alpha\colon x\mapsto\alpha x
\]
for some $\alpha\in\bF$, and
\[
m_X\left(
(\varphi_\alpha,\varphi_\beta),
(\varphi_{\alpha'},\varphi_{\beta'})
\right)
=
(\varphi_\alpha\circ\varphi_{\alpha'},
 \varphi_{\beta'}\circ\varphi_\beta)
=
(\varphi_{\alpha\alpha'},\varphi_{\beta'\beta}).
\]
\end{example}

\begin{example}\label{exalt}
Let $\bchar(\bF)\neq2$ and consider the variety $\Alt$ of alternative algebras. The multilinearisation process shows that $\Alt$ is
equivalently determined by the identities
\[
(xy)z+(xz)y-x(yz)-x(zy)=0
\]
and
\[
(xy)z+(yx)z-x(yz)-y(xz)=0.
\]
A choice of the $\lambda/\mu$-rules is given by
\[
\lambda_1=\lambda_2=-\lambda_7=-\mu_2=\mu_7=\mu_8=1,
\]
with all the remaining coefficients equal to zero. In this case,
$\E(X)$ consists of the pairs
\[
(f\ast-,-\ast f)\in\End(X)^2
\]
satisfying
\[
f\ast(xy)
=
(x\ast f)y+(f\ast x)y-x(f\ast y),
\]
\[
(xy)\ast f
=
x(f\ast y)+x(y\ast f)-(x\ast f)y,
\]
\[
x(y\ast f)
=
(yx)\ast f+(xy)\ast f-y(x\ast f),
\]
and
\[
(f\ast x)y
=
f\ast(yx)+f\ast(xy)-(f\ast y)x
\]
for all $x,y\in X$. The bilinear map
\[
m_X\left(
(f\ast-,-\ast f),(g\ast-,-\ast g)
\right)
=
(h\ast-,-\ast h)
\]
is given by
\[
h\ast x
=
-(f\ast x)\ast g+f\ast(g\ast x)+f\ast(x\ast g)
\]
and
\[
x\ast h
=
(x\ast f)\ast g+(f\ast x)\ast g-f\ast(x\ast g).
\]
Even when $m_X$ defines a bilinear operation on~$\E(X)$, this multiplication need not be alternative. For example, let $X=\bF^2$ be the abelian $2$-dimensional algebra. Then, as vector spaces, $\E(X)=\End(X)^2 \cong \M_2(\bF)^2$, and $m_X$ is total. Identifying $\End(X)$ with $\M_2(\bF)$ via the canonical basis of $\bF^2$, so that composition corresponds to the product of matrices, set
\[
P=\begin{pmatrix}1&0\\0&0\end{pmatrix},\qquad
Q=\begin{pmatrix}0&1\\0&0\end{pmatrix},\qquad
f=(P,0),\quad g=(0,Q).
\]
Writing $\langle u,v\rangle=m_X(u,v)$, direct calculation gives
\[
\langle f,f\rangle=f,\qquad
\langle f,g\rangle=(Q,-Q),\qquad
\langle f,\langle f,g\rangle\rangle=(0,Q).
\]
Thus
\[
\langle\langle f,f\rangle,g\rangle
=(Q,-Q)\neq(0,Q)
=\langle f,\langle f,g\rangle\rangle,
\]
so the left alternative identity fails.
\end{example}

Let
\[
\widetilde m_X\colon
\E(X)\otimes\E(X)\longrightarrow\End(X)^2
\]
be the linearisation of the bilinear map $m_X$. Define
\[
D_X
\coloneqq
\widetilde m_X^{-1}\bigl(\E(X)\bigr)
\leq
\E(X)\otimes\E(X)
\]
and
\[
\mu_X
\coloneqq
\left.\widetilde m_X\right|_{D_X}
\colon
D_X\longrightarrow\E(X).
\]
By \Cref{lem:partial-from-bilinear-map},
\[
\bigl(\E(X),D_X,\mu_X\bigr)
\]
is a partial algebra. By abuse of notation, we shall continue to
denote this partial algebra simply by $\E(X)$.

The partial multiplication induced by $\mu_X$ will be denoted by $\langle-,-\rangle$. More precisely, for $f,g\in\E(X)$, the product
$\langle f,g\rangle$ is defined if and only if
\[
f\otimes g\in D_X,
\]
or, equivalently, if and only if
\[
m_X(f,g)\in\E(X).
\]
Whenever it is defined, we have
\[
\langle f,g\rangle
=
\mu_X(f\otimes g)
=
m_X(f,g).
\]

\begin{remark}
The construction of $\E(X)$ gives an alternative characterisation of
split extensions in $\V$. A split extension of $B$ by $X$ in $\V$ is
equivalent to a homomorphism of partial algebras
\[
\varphi\colon
\widetilde U(B)\to\E(X) \colon b\mapsto(b\ast-,-\ast b),
\]
such that
\[
\Phi_{k,i}(\alpha_1,\ldots,\alpha_k)=0,
\qquad
i=1,\ldots,n_k,
\]
for all the mixed choices described in \Cref{prop_semidirect}.
\end{remark}

We thus obtain the following result, which adapts~\cite{WRAAlg} to the notion of partial algebra adopted in \Cref{SecPart}.

\begin{theorem}\label{thmV}\cite{WRAAlg}
Let $\V$ be an action accessible, operadic variety of non-associative
algebras.

\begin{enumerate}
\item[(i)]
For every object $X$ of $\V$, there exists a natural monomorphism of
functors
\[
\tau\colon
\SplExt(-,X)
\rightarrowtail
\Hom_{\PAlg}(\widetilde U(-),\E(X)).
\]
For every object $B$ of $\V$, the component $\tau_B$ sends the isomorphism class of a split extension of $B$ by $X$, with associated derived action $(l,r)$, to the homomorphism of partial algebras
\[
\widetilde U(B)\to\E(X) \colon b\mapsto(b\ast-,-\ast b).
\]

\item[(ii)]
Let $B$ and $X$ be objects of $\V$. A homomorphism of partial algebras
\[
\varphi\colon
\widetilde U(B)\to\E(X) \colon b\mapsto(b\ast-,-\ast b)
\]
belongs to $\operatorname{Im}(\tau_B)$ if and only if
\[
\Phi_{k,i}(\alpha_1,\ldots,\alpha_k)=0
\]
for all the mixed choices described in \Cref{prop_semidirect}.

\item[(iii)]
If $D_X=\E(X)\otimes\E(X)$ and the resulting total algebra, with multiplication
\[
\langle f,g\rangle=\mu_X(f\otimes g),
\]
is an object of $\V$, then $(\E(X),\tau)$ is a weak representation of
$\SplExt(-,X)$.

\item[(iv)]
Suppose that $\V$ is a variety of commutative (respectively, anti-commutative)
algebras, let $\varepsilon=1$ (respectively, $\varepsilon=-1$), and choose the $\lambda/\mu$-rules in the form
\[
\begin{aligned}
x(yz)&=\alpha(xy)z+\beta(xz)y,\\
(yz)x&=\varepsilon\alpha(xy)z+\varepsilon\beta(xz)y,
\end{aligned}
\]
for some $\alpha,\beta\in\bF$.

Let $\A_X$ consist of the linear endomorphisms $f\colon X\to X$ for which
\[
\Phi_{k,i}(x_1,\ldots,x_{j-1},f,x_{j+1},\ldots,x_k)=0
\]
for every defining identity $\Phi_{k,i}$, every $1\leq j\leq k$,
and every choice of the other variables in $X$. Products involving
$f$ are evaluated using $fx=f(x)$ and $xf=\varepsilon f(x)$. Consider the bilinear map
\[
m_X^\varepsilon\colon \A_X\times \A_X\to\End(X) \colon (f,g)
\mapsto \varepsilon\bigl(\alpha(g\circ f)+\beta(f\circ g)\bigr),
\]
and let
\[
\widetilde m_X^\varepsilon\colon \A_X\otimes \A_X\to\End(X)
\]
be its linearisation. Define
\[
D_X^\varepsilon
=\left(\widetilde m_X^\varepsilon\right)^{-1}(\A_X),
\qquad
\mu_X^\varepsilon
=\left.\widetilde m_X^\varepsilon\right|_{D_X^\varepsilon}.
\]
Then the linear map $j_X\colon \A_X\to\E(X)$, defined by
$j_X(f)=(f,\varepsilon f)$, is an isomorphism of partial algebras
\[
(\A_X,D_X^\varepsilon,\mu_X^\varepsilon)
\cong(\E(X),D_X,\mu_X).
\]
\end{enumerate}
\end{theorem}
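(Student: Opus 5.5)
We prove the four parts in order, and everything rests on \Cref{lemma derived action} and \Cref{prop_semidirect}. By \Cref{intderivedact} and \Cref{lemma derived action}, an isomorphism class of split extensions of $B$ by $X$ is the same as a derived action $(l,r)$, that is, a pair of bilinear maps for which $(B\oplus X,\cdot)$ lies in $\V$. By \Cref{prop_semidirect}, this holds exactly when all mixed evaluations of the $\Phi_{k,i}$ vanish.

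\emph{Part (i).} First we check that $b\mapsto(b\ast-,-\ast b)$ lands in $\E(X)$. The mixed evaluations with exactly one entry $(b,0)$ and all other entries in $X$ are precisely the defining conditions of $\E(X)$ for $f=\varphi(b)$. This uses \Cref{rem_b*} to identify products in $B\ltimes X$ with $b\ast x$, $x\ast b$ and the product of $X$.

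Next we check that $\varphi$ is a homomorphism $\widetilde U(B)\to\E(X)$. By \Cref{lem:morphism-from-total}, it suffices to show $m_X(\varphi(b),\varphi(b'))=\varphi(bb')$ for all $b,b'\in B$. Since $\V$ is an Orzech category of interest (\Cref{Theorem AC iff Orzech}), the $\lambda/\mu$-rules are identities of $\V$, so they hold in $B\ltimes X$. Evaluate them with $y=(b,0)$, $z=(b',0)$ and $x=(0,x)$ placed in the outer slot. Rewriting the left-hand sides $x(bb')$ and $(bb')x$ then gives exactly $x\ast(bb')$ and $(bb')\ast x$ as the formulas defining $m_X$. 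Hence $m_X(\varphi(b),\varphi(b'))=\varphi(bb')\in\E(X)$, so the product is defined.

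It remains to prove naturality and injectivity. Naturality follows from \eqref{eqf'}: pulling back along $g\colon B'\to B$ composes the derived action with $g$, so $\tau_{B'}(g^*\xi)=\tau_B(\xi)\circ g$. Injectivity holds because $\varphi$ records $(l,r)$ completely, and by \Cref{intderivedact} the pair $(l,r)$ determines the isomorphism class.

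\emph{Part (ii).} Given a homomorphism $\varphi$, define $l(b,x)=\varphi(b)\ast x$ and $r(x,b)=x\ast\varphi(b)$. By \Cref{prop_semidirect}, this pair is a derived action exactly when the mixed conditions hold. So (ii) is a restatement of \Cref{prop_semidirect}, together with the fact that $\tau_B$ sends each derived action back to its own $\varphi$.

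\emph{Part (iii).} Here $D_X=\E(X)\otimes\E(X)$, so $\E(X)$ is a total algebra. If it is an object of $\V$, then full faithfulness of $\widetilde U$ gives $\Hom_{\PAlg}(\widetilde U(B),\widetilde U(\E(X)))=\Hom_{\V}(B,\E(X))$, naturally in $B$. Composing with $\tau$ from (i) yields a natural monomorphism into $\Hom_\V(-,\E(X))$, which is a weak representation by \Cref{defWRA}.

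\emph{Part (iv).} In the (anti-)commutative case we first show $x\ast f=\varepsilon f\ast x$ for every $f\in\E(X)$. Because the variety is operadic, the identity $xy-\varepsilon yx=0$ is a multilinear identity of $\V$, and the remark after \Cref{E(X)} shows it vanishes when one variable is replaced by $f$. Hence $j_X(f)=(f,\varepsilon f)$ is a linear bijection from $\A_X$ onto $\E(X)$. The defining conditions match under this substitution, since $xf=\varepsilon f(x)$.

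Then we compute $m_X(j_Xf,j_Xg)$ with only $\lambda_1=\alpha$, $\lambda_5=\beta$ and $\mu_1=\varepsilon\alpha$, $\mu_5=\varepsilon\beta$ nonzero. This gives $x\ast h=\alpha\,\varepsilon g(\varepsilon f(x))+\beta\,\varepsilon f(\varepsilon g(x))=\alpha(g\circ f)(x)+\beta(f\circ g)(x)$, and $h\ast x=\varepsilon(\alpha\, g\circ f+\beta\, f\circ g)(x)$. Therefore $m_X(j_Xf,j_Xg)=j_X\bigl(m^\varepsilon_X(f,g)\bigr)$, using $\varepsilon^2=1$. Applying the inverse images under $\widetilde m_X$ and $\widetilde m^\varepsilon_X$ then shows $(j_X\otimes j_X)(D^\varepsilon_X)=D_X$ and that $j_X$ intertwines $\mu^\varepsilon_X$ with $\mu_X$.

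The main obstacle is the verification in (i) that $m_X(\varphi(b),\varphi(b'))=\varphi(bb')$. It requires matching the eight terms of the $\lambda/\mu$-rules, with the correct variable placement, against the formula for $m_X$. The bookkeeping of which of $b$, $b'$, $x$ sits in which slot is where errors are likely, and we would check it term by term.
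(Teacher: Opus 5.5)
Your proposal is correct and follows the paper's proof essentially step by step: in (i) you apply \Cref{prop_semidirect} with a single element of $B$ and evaluate the fixed $\lambda/\mu$-rules at $(0,x),(b,0),(b',0)$ in $B\ltimes X$; (ii) is a restatement of \Cref{prop_semidirect}; (iii) uses full faithfulness of $\widetilde U$; and (iv) uses the mixed (anti-)commutativity identity, then the identity $m_X(j_Xf,j_Xg)=(m^\varepsilon_X(f,g),\varepsilon\, m^\varepsilon_X(f,g))$ extended linearly to tensors. One small wording point: $xy-\varepsilon yx$ is an identity of $\V$ because of the (anti-)commutativity hypothesis, not because $\V$ is operadic; operadicity is the standing assumption under which the remark after \Cref{E(X)} applies to that identity.
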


\begin{proof}{\ }
\begin{itemize}
\item[(i)] Let $(l,r)$ be the derived action associated with a split extension of $B$ by $X$, and set $\varphi(b)=(b\ast-,-\ast b)$. \Cref{prop_semidirect}, applied to the mixed choices containing exactly one element of $B$, gives $\varphi(b)\in\E(X)$. Applying the fixed $\lambda/\mu$-rules in the semidirect product gives
$m_X(\varphi(b),\varphi(b'))=\varphi(bb')\in\E(X)$; hence
$\varphi(b)\otimes\varphi(b')\in D_X$ and
$\mu_X(\varphi(b)\otimes\varphi(b'))=\varphi(bb')$, so $\varphi$ is a
homomorphism of partial algebras by \Cref{lem:morphism-from-total}.
Since $(l,r)$ is recovered from $\varphi$, each $\tau_B$ is injective;
naturality follows from the fact that change of base along
$g\colon B'\to B$ acts on derived actions by precomposition with~$g$.

\item[(ii)] If $\varphi$ satisfies the mixed identities, then
$(B\oplus X,\cdot)$ is an object of $\V$ by \Cref{prop_semidirect},
and $\varphi$ is the image of the corresponding split extension. The
converse is clear.

\item[(iii)] If $D_X=\E(X)\otimes\E(X)$, then
$(\E(X),D_X,\mu_X)=\widetilde U(\E(X))$, where $\E(X)$ is regarded as
an algebra with multiplication~$m_X$. Since $\widetilde U$ is fully
faithful, if this algebra lies in $\V$ we get
$\Hom_{\PAlg}(\widetilde U(-),\E(X))\cong\Hom_{\V}(-,\E(X))$, and
$\tau$ is a weak representation.

\item[(iv)] The mixed commutativity or anti-commutativity identity implies that every element of $\E(X)$ has the form $(f,\varepsilon f)$. The remaining identities with one operator identify $\E(X)$, as a vector space, with $\A_X$ via $j_X$. Using the chosen $\lambda/\mu$-rules, we get
\[
m_X(j_X(f),j_X(g))
=\bigl(m_X^\varepsilon(f,g),
       \varepsilon m_X^\varepsilon(f,g)\bigr).
\]
By linearity, for every $t\in \A_X\otimes \A_X$,
\[
\widetilde m_X\bigl((j_X\otimes j_X)(t)\bigr)
=\bigl(\widetilde m_X^\varepsilon(t),
       \varepsilon\widetilde m_X^\varepsilon(t)\bigr).
\]
This pair belongs to $\E(X)=j_X(\A_X)$ if and only if
$\widetilde m_X^\varepsilon(t)\in \A_X$.
Hence $j_X\otimes j_X$ identifies $D_X^\varepsilon$ with $D_X$,
and $j_X$ preserves the partial multiplications.
\end{itemize}
\end{proof}

\begin{definition}\cite{WRAAlg}
The partial algebra $\E(X)$ is called the \emph{external weak actor} of~$X$, and the pair $(\E(X),\tau)$ is called an \emph{external weak representation} of the functor $\SplExt(-,X)$. When $\tau$ is a natural isomorphism of functors, $\E(X)$ is called the \emph{external actor} of $X$.
\end{definition}

The following result motivates the choice of commutative and anti-commutative varieties considered in the examples below.

\begin{theorem}\cite{WRAAlg}
Let $\bF$ be a field with $\bchar(\bF)\neq 2,3$, and let $\V$ be a non-abelian, action accessible, operadic, quadratic variety of non-associative $\bF$-algebras.

\begin{enumerate}
\item[(1)]
If $\V$ is a variety of commutative algebras, then it coincides
with one of
\[
\CAssoc,\qquad \JJord,\qquad \Nil_2(\Com).
\]

\item[(2)]
If $\V$ is a variety of anti-commutative algebras, then it coincides
with one of
\[
\Lie,\qquad \ACAAssoc,\qquad \Nil_2(\ACom).
\]
\end{enumerate}
\end{theorem}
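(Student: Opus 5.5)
We plan to reduce the statement to a classification of the possible $\lambda/\mu$-rules together with the degree-$2$ and degree-$3$ multilinear identities of $\V$. Since $\V$ is operadic and quadratic, it is determined by multilinear identities of degrees $2$ and $3$. In case~(1) the degree-$2$ part contains commutativity, and in case~(2) it contains anti-commutativity; in characteristic different from $2$ these are the only non-trivial options compatible with the hypothesis. By the remark following \Cref{Theorem AC iff Orzech}, action accessibility then amounts to an identity
\[
x(yz)=\alpha(xy)z+\beta(xz)y
\]
holding in $\V$ for some $\alpha,\beta\in\bF$. Rewriting with (anti-)commutativity, we express everything in terms of the three degree-$3$ multilinear monomials $u_1=(xy)z$, $u_2=(yz)x$, $u_3=(zx)y$. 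In the free (anti-)commutative algebra on $x,y,z$, the multilinear component of degree $3$ is $3$-dimensional with basis $u_1,u_2,u_3$, and $S_3$ acts on it. The defining degree-$3$ identities of $\V$ span an $S_3$-submodule $I$, and $\V$ is determined by $I$.

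The key step is to classify the $S_3$-submodules $I$ containing a vector of the form $x(yz)-\alpha(xy)z-\beta(xz)y$. Since $\bchar(\bF)\neq2,3$, Maschke's theorem applies, and the module splits as a trivial summand spanned by $u_1+u_2+u_3$ plus a $2$-dimensional standard summand. In the anti-commutative case the module is instead the sign twist of this decomposition. Hence $I$ can only be $0$, the $1$-dimensional summand, the $2$-dimensional summand, or everything.

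We then identify the corresponding varieties. If $I=0$, no $\lambda/\mu$-rule exists; this has to be checked by showing that the orbit of $x(yz)-\alpha(xy)z-\beta(xz)y$ is never zero, so this case is excluded. If $I$ is the whole space, we obtain $\Nil_2(\Com)$, respectively $\Nil_2(\ACom)$. If $I$ is the $1$-dimensional summand, it is spanned by the symmetrised sum, which gives the Jacobi identity: this yields $\JJord$ in the commutative case and $\Lie$ in the anti-commutative case. If $I$ is the $2$-dimensional summand, it is spanned by differences such as $u_1-u_2$, which gives associativity, respectively anti-associativity: this yields $\CAssoc$, respectively $\ACAAssoc$.

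Finally, we must check that in the $1$- and $2$-dimensional cases a $\lambda/\mu$-rule really exists. For the $2$-dimensional summand we take $\alpha=1,\beta=0$ (associativity), and for the trivial summand we take $\alpha=1,\beta=-1$ up to sign (Jacobi). The main obstacle is the identification of which element of the decomposition each relation generates. Concretely, we must verify that a single relation $u_2\mp\alpha u_1\mp\beta u_3$ generates the whole $2$-dimensional summand unless it is proportional to the Jacobi vector. This uses that the standard representation is irreducible when $\bchar(\bF)\neq 3$; in characteristic $3$ the trivial line lies inside the standard part, and the argument breaks down.

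Non-abelianness excludes the case in which $I$ also kills degree $2$, that is, $\AbAlg$. This completes the case analysis, giving exactly the three varieties in each of~(1) and~(2).
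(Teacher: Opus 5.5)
The survey quotes this result from \cite{WRAAlg} without proof, so there is no argument in the text to compare yours with. Judged on its own, your strategy is correct. Being (anti-)commutative, operadic and quadratic reduces $\V$ to an $S_3$-stable subspace $I$ of $\bF\{u_1,u_2,u_3\}$. Action accessibility says exactly that $I$ contains a vector with non-zero $u_2$-coordinate, i.e.\ that $I\neq0$: permute any non-zero coordinate into the $u_2$-slot. The non-zero submodules are exactly $T=\bF\{u_1+u_2+u_3\}$, $A=\{\sum a_iu_i\mid\sum a_i=0\}$ and $T\oplus A$. These are the same subspaces in the sign-twisted anti-commutative case. They are realised by $\JJord$/$\Lie$, $\CAssoc$/$\ACAAssoc$ and $\Nil_2(\Com)$/$\Nil_2(\ACom)$ respectively.

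Several points in your write-up need correcting, although none affects this core.

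\textbf{The main obstacle.} This step is both misstated and unnecessary. $I$ is the module of \emph{all} degree-$3$ identities of $\V$, not the module generated by the $\lambda/\mu$-rule. So once the submodule lattice is known, nothing is left to verify. The claim as written is also false: $v=u_2$ (the rule $x(yz)=0$) generates the whole space. The correct statement is that the cyclic module generated by $v$ is $T$, $A$, or everything, according as $v\in T$, $v\in A$, or neither.

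\textbf{Sign slips in the coefficients.} For $\JJord$ the rule is $\alpha=\beta=-1$; the choice $\alpha=1$, $\beta=-1$ is the Lie rule. In the anti-commutative case, $\alpha=1$, $\beta=0$ gives the vector $u_1+u_2$, which lies in neither summand and defines $\Nil_2(\ACom)$. For $\ACAAssoc$ you should take $\alpha=-1$, $\beta=0$, or $\alpha=0$, $\beta=1$ as in \Cref{ExamplesEX}(5). These checks only concern the converse, which the statement does not assert.

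\textbf{Maschke's theorem is not needed.} Suppose only $\bchar(\bF)\neq2$. A $1$-dimensional submodule is a common eigenline of the three transpositions, which forces $T$. A $2$-dimensional submodule is the kernel of a map onto a $1$-dimensional representation, which forces $A$. Twisting by the sign does not change the invariant subspaces. So in characteristic~$3$ the lattice is the chain $0\subset T\subset A\subset\bF\{u_1,u_2,u_3\}$, and the same three varieties arise. Your remark that the argument breaks down there concerns only the semisimplicity step, not the conclusion.
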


In the following examples, we describe the external weak actors
of these varieties, as well as those of the abelian variety $\AbAlg$, of the varieties $\Assoc$ and $\Leib$, and of the variety $\Nil_2(\Alg)$ of $2$-nilpotent algebras. We specify the $\lambda/\mu$-rules used in each case.

\begin{examples}\label{ExamplesEX}{\ }
\begin{enumerate}
\item[(1)] If $\V=\AbAlg$, we choose
\[
x(yz)=(yz)x=0.
\]
Then $\E(X)=0$ is the actor of $X$.

\item[(2)]
If $\V=\CAssoc$, we choose
\[
x(yz)=(xz)y,
\qquad
(yz)x=(xz)y.
\]
With this choice, $\E(X)\cong\M(X)$ is an associative
algebra, with multiplication induced by the usual composition of endomorphisms. Moreover, there is a natural isomorphism
\[
\SplExt_{\CAssoc}(-,X)
\cong
\Hom_{\Assoc}(\overline{U}(-),\M(X)),
\]
where $\overline{U}\colon\CAssoc\to\Assoc$ denotes the
forgetful functor and $\M(X)$ is the associative algebra
of multipliers of $X$. Hence $\E(X)\cong\M(X)$ is an external actor of $X$.

\item[(3)]
If $\V=\Lie$ and $\bchar(\bF)\neq2$, we choose
\[
\begin{aligned}
x(yz)&=(xy)z-(xz)y,\\
(yz)x&=-(xy)z+(xz)y.
\end{aligned}
\]
Then $\E(X)\cong\Der(X)$, with the usual commutator bracket, is the actor of~$X$.

\item[(4)]
Let $\V=\JJord$ be the variety of Jacobi--Jordan algebras. We choose
\[
\begin{aligned}
x(yz)&=-(xy)z-(xz)y,\\
(yz)x&=-(xy)z-(xz)y.
\end{aligned}
\]
For any Jacobi--Jordan algebra $X$, let
\[
\ADer(X)=
\left\{
d\in\End(X)\ \middle|\
d(xy)=-d(x)y-xd(y),\; \forall x,y\in X
\right\}
\]
be the vector space of \emph{anti-derivations} of $X$.
Consider the linear map
\[
\widetilde m_X\colon\ADer(X)\otimes\ADer(X)\to\End(X)
\]
determined on pure tensors by
\[
\widetilde m_X(d\otimes d')=-d\circ d'-d'\circ d,
\]
and put
\[
D_X=\widetilde m_X^{-1}\bigl(\ADer(X)\bigr),\qquad
\mu_X=\widetilde m_X|_{D_X}.
\]
This defines the partial algebra
\[
P_X=\bigl(\ADer(X),D_X,\mu_X\bigr),
\]
whose partial multiplication is commutative.
For $d,d'\in\ADer(X)$, their product is defined if and only if
$-d\circ d'-d'\circ d$ is an anti-derivation.
Whenever it is defined, the product is given by
\[
\langle d,d'\rangle=-d\circ d'-d'\circ d.
\]

To see that $P_X$ is an external actor, let $B$ be an object of $\JJord$. A linear map $\varphi\colon B\to\ADer(X)$ defines a homomorphism
$\widetilde U(B)\to P_X$ if and only if
\[
\varphi(bb')
=-\varphi(b)\circ\varphi(b')
 -\varphi(b')\circ\varphi(b)
\]
for every $b,b' \in B$. The condition $\varphi(b)\in\ADer(X)$ expresses the mixed
Jacobi identities with one element of $B$, while the displayed equation expresses those with two elements of $B$. Thus,
\[
l(b,x)=r(x,b)=\varphi(b)(x)
\]
define a derived action of $B$ on $X$.
Conversely, every derived action has this form and determines a
unique such homomorphism. This correspondence is natural in $B$,
giving
\[
\SplExt_{\JJord}(-,X)
\cong
\Hom_{\PAlg}\bigl(\widetilde U(-),P_X\bigr).
\]
With the above choice of the $\lambda/\mu$-rules,
\Cref{thmV}(iv), applied with $\varepsilon=1$ and
$\alpha=\beta=-1$, gives an isomorphism
$\E(X)\cong P_X$ of partial algebras; see
\cite[Theorem~2.17]{WRAAlg}.
If $\bchar(\bF)=2$, then $\ADer(X)=\Der(X)$ and
$\langle d,d'\rangle=[d,d']$.
In this case, the bracket is symmetric
and satisfies the Jacobi identity.
Hence, $P_X$ is an object of $\JJord$, and the above natural isomorphism shows that $\Der(X)$ is the actor of $X$. Thus, $\JJord$ is action representable in
characteristic $2$; this also follows from~\cite[Theorem~1.12]{Tim}, since in characteristic~$2$ the variety $\JJord$ coincides with the variety of quasi-Lie algebras considered there. When $\bchar(\bF)\neq2$, it remains open whether $\JJord$ is weakly action representable.

\item[(5)]
Let $\V=\ACAAssoc$ be the variety of anti-commutative
anti-associative algebras. We choose
\[
x(yz)=(xz)y,
\qquad
(yz)x=-(xz)y.
\]
For any anti-commutative anti-associative algebra $X$, we define the vector space
\[
\A_X=
\left\{
f\in\End(X)\ \middle|\
f(xy)=-f(x)y,\; \forall x,y\in X
\right\}.
\]
Consider the linear map
\[
\widetilde m_X\colon \A_X\otimes \A_X\to\End(X) \colon f\otimes g \mapsto -f\circ g,
\]
and put
\[
D_X=\widetilde m_X^{-1}(\A_X),\qquad
\mu_X=\widetilde m_X|_{D_X}.
\]
This defines the partial algebra
\[
P_X=(\A_X,D_X,\mu_X).
\]
For $f,g\in \A_X$, their product is defined if and only if
$-f\circ g\in \A_X$, and then
\[
\langle f,g\rangle=-f\circ g.
\]
The partial multiplication is associative wherever both iterated
products are defined, since
\[
\langle\langle f,g\rangle,h\rangle
=(f\circ g)\circ h=f \circ (g \circ h)
=\langle f,\langle g,h\rangle\rangle.
\]
Let $B$ be an object of $\ACAAssoc$. A linear map $\varphi\colon B\to \A_X$
defines a homomorphism $\widetilde U(B)\to P_X$ if and only if
\[
\varphi(bb')=-\varphi(b)\circ\varphi(b')
\]
for every $b,b' \in B$. The condition $\varphi(b)\in \A_X$ expresses the mixed
anti-associative identities with one element of $B$.
The displayed equation, together with the anti-commutativity
of $B$, expresses those with two elements of $B$.
Thus
\[
l(b,x)=-r(x,b)=\varphi(b)(x)
\]
define a derived action of $B$ on $X$. Conversely, every derived
action determines a unique such homomorphism.
Naturality in $B$ therefore gives
\[
\SplExt_{\ACAAssoc}(-,X)
\cong
\Hom_{\PAlg}\bigl(\widetilde U(-),P_X\bigr).
\]
With the above choice of the $\lambda/\mu$-rules,
\Cref{thmV}(iv), applied with $\varepsilon=-1$,
$\alpha=0$ and $\beta=1$, yields an isomorphism
$\E(X)\cong P_X$ of partial algebras; see
\cite[Theorem~2.20]{WRAAlg}.
It remains open whether $\ACAAssoc$ is weakly action representable.

\item[(6)]
If $\V=\Nil_2(\Com)$ or $\V=\Nil_2(\ACom)$, we choose
\[
x(yz)=(yz)x=0.
\]
Then $\E(X)$ is isomorphic to the abelian algebra
\[
\A_X=\left\{
f\in\End(X)
\mid
f(xy)=f(x)y=0,\; \forall x,y\in X
\right\}.
\]
It follows from \Cref{thmV}(iii) that both
$\Nil_2(\Com)$ and $\Nil_2(\ACom)$ are weakly action
representable; see also~\cite[Theorems~2.19 and~2.21]{WRAAlg}.

\item[(7)] If $\V=\Assoc$ and we choose the $\lambda/\mu$-rules as in \Cref{Example associative}, then $\E(X)\cong\Bim(X)$, the associative algebra of bimultipliers of $X$. Thus, as already stated in \Cref{USGA_Assoc}, the variety $\Assoc$ is weakly action representable.

\item[(8)] Let $\V=\Leib$ and choose the $\lambda/\mu$-rules as in \Cref{esLeib}. Then $\E(X)\cong\Bider(X)$, the Leibniz algebra of biderivations of $X$. It follows that the variety $\Leib$ is weakly action representable; see~\cite{CigoliManciniMetere}.

\item[(9)]
If $\V=\Nil_2(\Alg)$, we choose again
\[
x(yz)=(yz)x=0.
\]
Then, as observed in \Cref{exnil2}, $\E(X)$ is an abelian
algebra. Thus, $\Nil_2(\Alg)$ is weakly action representable.
\end{enumerate}
\end{examples}

\begin{remark}
By the results recalled at the end of \Cref{SecIntAct}, there are action
accessible varieties for which no choice of the $\lambda/\mu$-rules
makes $\E(X)$ a total algebra in the variety for every object $X$.
Indeed, the varieties $\Nil_k(\Lie)$ of $k$-nilpotent and $\Sol_n(\Lie)$ of $n$-solvable Lie algebras are not weakly action representable for $k\geq3$ and $n\geq2$, respectively~\cite{XabiMancini}.
Over a field of characteristic different from~$2$, they are action accessible, operadic varieties, so that the assertion follows from \Cref{thmV}(iii).
\end{remark}

\begin{remark}
The construction of the vector space $\E(X)$ can also be carried out for an operadic variety $\V$ which is not action accessible. In that case, however, there is no canonical way to
endow $\E(X)$ with a bilinear map $m_X$ as in
\Cref{E(X)}. We therefore obtain only a natural monomorphism of
functors
\[
\tau\colon
\SplExt(-,X)
\rightarrowtail
\Hom_{\Vect}(\hat{U}(-),\E(X)),
\]
where $\Vect$ is the category of $\bF$-vector spaces and
$\hat{U}\colon\V\to\Vect$ is the forgetful functor.
\end{remark}

\subsection{The USGA and the external weak actor}\label{SecRel}

As described in \Cref{SecUSGA}, for every object $X$ of an Orzech category of interest $\cC$, there is a natural monomorphism of functors
\[
\mu\colon
\SplExt(-,X)
\rightarrowtail
\Hom_{\cC'}(U(-),\USGA(X)),
\]
where $\cC'$ is a category containing $\cC$ as a full subcategory,
$U\colon\cC\to\cC'$ denotes the forgetful functor, and
$\USGA(X)$ is an object of $\cC'$ called the \emph{universal strict
general actor} of $X$. We further recall that, once a presentation of
the Orzech category of interest $\cC$ has been fixed, $\USGA(X)$ is
unique up to isomorphism; see \Cref{presUSGA}.

\begin{remark}
\Cref{Theorem AC iff Orzech} shows that a homogeneous variety of
non-associative algebras is an Orzech category of interest if and
only if it is action accessible. As observed in \Cref{USGA_var},
a formulation of Axiom~2 is determined by a choice of constants
\[
\lambda_1,\ldots,\lambda_8,\mu_1,\ldots,\mu_8\in\bF
\]
defining the $\lambda/\mu$-rules. Moreover, in this case, $\V'=\Alg$, the category of all non-associative algebras.
\end{remark}

The results recalled in \Cref{USGA} and \Cref{thmV} give the following natural monomorphisms for varieties of non-associative algebras.

\begin{proposition}
Let $\V$ be an action accessible, operadic variety of
non-associative algebras, together with a choice of constants
\[
\lambda_1,\ldots,\lambda_8,\mu_1,\ldots,\mu_8\in\bF
\]
defining the $\lambda/\mu$-rules. For every object $X$ of $\V$,
there are natural monomorphisms of functors
\[
\mu\colon
\SplExt(-,X)
\rightarrowtail
\Hom_{\Alg}(U(-),\USGA(X))
\]
and
\[
\tau\colon
\SplExt(-,X)
\rightarrowtail
\Hom_{\PAlg}(\widetilde U(-),\E(X)),
\]
where
\[
U\colon\V\to\Alg
\qquad\text{and}\qquad
\widetilde U\colon\V\to\PAlg
\]
are the forgetful functors.
\end{proposition}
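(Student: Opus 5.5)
The plan is to obtain the two monomorphisms separately, each from a result already stated, after checking that the hypotheses match.

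For $\mu$, first place $\V$ in the framework of \Cref{SecOrz}. Since $\V$ is operadic, it is determined by multilinear, hence homogeneous, identities, so \Cref{Theorem AC iff Orzech} applies. Action accessibility therefore makes $\V$ an Orzech category of interest. By \Cref{USGA_var}, the fixed constants $\lambda_1,\ldots,\lambda_8,\mu_1,\ldots,\mu_8$ give a formulation of Axiom~2, and hence a fixed presentation. With respect to this presentation, $\USGA(X)$ is defined up to isomorphism (\Cref{presUSGA}).

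Next identify the general category of groups with operations. The set $\mathbb{E}'$ consists of the group identities, distributivity, compatibility of the unary operations $\omega_\lambda$ with sums and products, commutativity of $+$ and the vector space axioms. These are exactly the axioms of a non-associative $\bF$-algebra, so $\V'=\Alg$ and the forgetful functor $U\colon\V\to\V'$ is $U\colon\V\to\Alg$. \Cref{USGA} then gives directly the natural monomorphism $\mu\colon\SplExt(-,X)\rightarrowtail\Hom_{\Alg}(U(-),\USGA(X))$.

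For $\tau$, apply \Cref{thmV}(i) verbatim: $\V$ is action accessible and operadic, and $\E(X)$ is built from the same fixed $\lambda/\mu$-rules. This yields the natural monomorphism $\tau\colon\SplExt(-,X)\rightarrowtail\Hom_{\PAlg}(\widetilde U(-),\E(X))$.

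The only point needing care, and the expected main obstacle, is that both constructions use one and the same choice of constants. Otherwise $\USGA(X)$ and the multiplication $m_X$ on $\E(X)$ would be built from different rules. The vector space $\E(X)$ is independent of this choice, but its partial structure and $\USGA(X)$ are not. I would say explicitly that the constants fixed in the statement are used both as the Axiom~2 presentation, via \Cref{USGA_var}, and as the rules in \Cref{E(X)}. I would also note that the identification $\V'=\Alg$ holds for every such presentation, because $\mathbb{E}'$ does not involve Axiom~2.
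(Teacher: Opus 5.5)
Your proposal is correct and takes essentially the same route as the paper, which obtains the statement directly by combining \Cref{USGA} with \Cref{thmV}(i), after observing via \Cref{Theorem AC iff Orzech} and \Cref{USGA_var} that $\V$ is an Orzech category of interest whose Axiom~2 is given by the fixed $\lambda/\mu$-rules and whose associated general category is $\V'=\Alg$. Your checks that operadic implies homogeneous and that $\mathbb{E}'$ yields exactly the axioms of $\Alg$ make explicit what the paper states in the preceding remark. Your point about using one and the same choice of constants in both constructions is harmless bookkeeping, but it is not needed for the existence of the two monomorphisms.
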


It is often more convenient to work with the external weak
actor~$\E(X)$, since its construction is simpler than that of the
universal strict general actor $\USGA(X)$; see~\cite[Section~4]{Casas}.

\section{Conclusions and open questions}\label{SecConcl}

In this survey, we studied the representability of actions in
varieties of non-associative algebras over a field. After recalling
the general categorical framework, we examined action
representability, weak action representability, and action
accessibility, together with the constructions of the universal
strict general actor and the external weak actor. We first summarise the known results for the varieties considered above, and then outline some possible directions for future research.

\subsection{Summary of known results}

\Cref{tab:summary} collects the known results on the representability of actions in the varieties of non-associative algebras considered in this survey, over a field of characteristic different from~$2$, together with the further hypotheses on the field and the cases that remain open. For the weakly action representable varieties, it also indicates a weak representing object $T(X)$ of an object~$X$.

\begin{table}[ht]
\caption{Representability of actions in the varieties considered in this survey, over a field $\bF$ with $\bchar(\bF)\neq2$. Here AR, WRA and AA stand for action representability, weak action representability and action accessibility, respectively, so that AR $\Rightarrow$ WRA $\Rightarrow$ AA. When AR holds, $T(X)=[X]$ is the actor of~$X$.}
\label{tab:summary}
\centering
\small
\begin{tabular}{@{}lccccl@{}}
\toprule
Variety & AR & WRA & AA & $T(X)$ & References\\
\midrule
$\AbAlg$ & yes & yes & yes & $0$ & \Cref{ExamplesEX}(1)\\
$\Lie$ & yes & yes & yes & $\Der(X)$ & \Cref{exliealgebras}\\
$\Assoc$ & no$^{b}$ & yes & yes & $\Bim(X)$ & \Cref{thmassoc}\\
$\Leib$ & no$^{a}$ & yes & yes & $\Bider(X)$ & \Cref{ExamplesEX}(8)\\
$\Nil_2(\Com)$ & no$^{a}$ & yes & yes & $\E(X)$ & \Cref{ExamplesEX}(6)\\
$\Nil_2(\ACom)$ & no$^{a}$ & yes & yes & $\E(X)$ & \Cref{ExamplesEX}(6)\\
$\Nil_2(\Alg)$ & no$^{a}$ & yes & yes & $\E(X)$ & \Cref{ExamplesEX}(9)\\
$\JJord$ & no$^{a}$ & open & yes & --- & \Cref{ExamplesEX}(4)\\
$\ACAAssoc$ & no$^{a}$ & open & yes & --- & \Cref{ExamplesEX}(5)\\
$\Nil_k(\Assoc)$, $k\geq3$ & no$^{a}$ & open & yes & --- & \Cref{Example Nilk}\\
$\Alt$ & no$^{a}$ & open & yes & --- & \Cref{exalt}\\
$\Nil_k(\Lie)$, $k\geq3$ & no & no & yes & --- & \cite{XabiMancini}\\
$\Sol_n(\Lie)$, $n\geq2$ & no & no & yes & --- & \cite{XabiMancini}\\
\bottomrule
\end{tabular}

\bigskip
\begin{minipage}{\linewidth}
\footnotesize
$^{a}$~Over an infinite field, by \Cref{AR implies Lie}.\\
$^{b}$~See~\cite[Proposition~1.11]{Tim}.
\end{minipage}
\end{table}

\subsection{Categorical and algebraic properties of $\PAlg$}

As observed in \Cref{remzeroPAlg}, the category $\PAlg$ of partial algebras over a field is pointed, and its zero object is~$\widetilde U(0)$, the zero algebra viewed as a partial algebra. A natural direction for future work is to investigate its categorical and algebraic properties, with
particular attention to the description of internal actions in $\PAlg$.

\subsection{Converse to point (iii) of \Cref{thmV}}

The criterion provided by \Cref{thmV}(iii) is sufficient, but not necessary, for weak action representability of a variety.
For instance, $\Leib$ is weakly action representable, but, with the second choice of the $\lambda/\mu$-rules in \Cref{exLeib}, the external weak actor of a Leibniz algebra need not be a Leibniz algebra. It is therefore natural to seek additional hypotheses under which the failure of $\E(X)$ to
satisfy the defining identities of $\V$, for some object $X$, implies
that $\V$ is not weakly action representable. More generally, one may
ask whether the identities satisfied by $\E(X)$ provide effective
obstructions to weak action representability.

Over a field of characteristic different from $2$, the varieties
$\Nil_k(\Assoc)$, for $k\geq3$, and $\Alt$, considered in
\Cref{Example Nilk,exalt}, as well as $\JJord$ and $\ACAAssoc$,
provide relevant test cases.
With the choices made above, their external weak actors
need not belong to the original variety.

For instance, let $X\neq0$ be an abelian algebra.
In $\ACAAssoc$, its external weak actor is $\End(X)$ with
multiplication
\[
\langle f,g\rangle=-f\circ g.
\]
This multiplication is not anti-commutative, since
\[
\langle\id_X,\id_X\rangle=-\id_X\neq0.
\]
In $\JJord$, the external weak actor is $\End(X)$ with
multiplication
\[
\langle f,g\rangle=-f\circ g-g\circ f.
\]
If $\bchar(\bF)\neq 2,3$, the Jacobi identity evaluated at
$(\id_X,\id_X,\id_X)$ gives
\[
3\,\langle\id_X,\langle\id_X,\id_X\rangle\rangle
=12\,\id_X\neq0.
\]
Hence, $\E(X)$ is not an object of $\JJord$.

\subsection{External weak actors for varieties of algebras with two bilinear operations}

Starting from the explicit construction of the universal strict
general actor for Poisson algebras in~\cite{CigoliManciniMetere},
one could seek to extend the
construction of the external weak actor~$\E(X)$ introduced in
\Cref{SecRepVar} to arbitrary action accessible, operadic varieties
of algebras with two bilinear operations, neither of which is assumed
to be associative.

\section*{Acknowledgements}
The author is grateful to his coauthors José Brox, Alan S.~Cigoli, Xabier García-Martínez, Giuseppe Metere, Tim Van der Linden and Corentin Vienne, with whom the results presented in this survey were obtained. He would like to thank in particular Giuseppe Metere, who supervised his doctoral thesis~\cite{Mancini_Thesis}, on which this survey is based, and Tim Van der Linden, for his valuable comments and for the discussions during the writing of this survey. He also thanks Andrea Albano for his careful reading of the manuscript.

The author is supported by the ``National Group for Algebraic and Geometric Structures and their Applications'' (GNSAGA -- INdAM), and he is a Postdoctoral Researcher of the Fonds de la Recherche Scientifique--FNRS.

\section*{Use of generative AI tools}
Generative AI tools (Anthropic's Claude Opus~5.5 and OpenAI's GPT-6~Sol) were used to polish the exposition and to help check the correctness of the proofs; they were not used to obtain any mathematical results. The author reviewed all suggestions made by these tools and takes full responsibility for the content of this article.


\vspace{1.5\baselineskip}

\end{document}